\newtheorem{theorem}{Theorem}
\newtheorem{observation}{Observation}
\newtheorem{claim}{Claim}
\newtheorem{lemma}{Lemma}
\newcommand{\qedclaim}{\hfill $\diamond$ \medskip}
\newcommand{\x}{\square}
\newcommand{\red}[1]{{\textcolor{red}{#1}}}
\newcommand{\blue}[1]{{\textcolor{blue}{#1}}}
\title{The Graph Coloring Game on $4\times n$-Grids\thanks{Research supported by the CAPES-Cofecub project Ma 1004/23, by the Inria Associated Team CANOE, by the research grants
    ANR P-GASE, ANR DiGraphs and by the French government, through the EUR DS4H Investments in the Future project managed by the National Research Agency (ANR) with the reference number ANR-17-EURE-0004.}}
\author[1,4]{Caroline Brosse}
\author[2]{Nicolas Martins}
\author[1]{Nicolas Nisse}
\author[3]{Rudini Sampaio}
\affil[1]{Universit\'{e} C\^{o}te d’Azur, Inria, CNRS, I3S, France.}
\affil[2]{Inst. de Eng. e Desenvolvimento Sustent\'{a}vel, UNILAB, Brazil.}
\affil[3]{Dept. Computação, Universidade Federal do Cear\'{a},  Brazil.}
\affil[4]{Université d’Orléans, INSA Centre Val de Loire, LIFO EA 4022, Orléans, France}
\date{}
\begin{document}

\maketitle

\begin{abstract}
The graph coloring game is a famous two-player game (re)introduced by Bodlaender in $1991$. Given a graph $G$ and $k \in \mathbb{N}$, Alice and Bob alternately (starting with Alice) color an uncolored vertex with some color in $\{1,\cdots,k\}$ such that no two adjacent vertices receive a same color. If eventually all vertices are colored, then Alice wins and Bob wins otherwise. The game chromatic number $\chi_g(G)$ is the smallest integer $k$ such that Alice has a winning strategy with $k$ colors in $G$. It has been recently (2020) shown that, given a graph $G$ and $k\in \mathbb{N}$,  deciding whether $\chi_g(G)\leq k$ is PSPACE-complete. Surprisingly, this parameter is not well understood even in ``simple" graph classes. Let $P_n$ denote the path with $n\geq 1$ vertices. For instance, in the case of Cartesian grids, it is easy to show that $\chi_g(P_m \x P_n) \leq 5$ since $\chi_g(G)\leq \Delta+1$ for any graph $G$ with maximum degree $\Delta$. However, the exact value is only known for small values of $m$, namely $\chi_g(P_1\x P_n)=3$, $\chi_g(P_2\x P_n)=4$ and $\chi_g(P_3\x P_n) =4$ for $n\geq 4$ [Raspaud, Wu, 2009]. Here, we prove that, for every $n\geq 18$,  $\chi_g(P_4\x P_n) =4$.
\end{abstract}

\section{Introduction}

Given a graph $G=(V,E)$ and $k \in \mathbb{N}$, the {\it $k$-coloring game} involves two players, Alice and Bob, who alternately (starting with Alice) choose an uncolored vertex $v \in V$ and assign to it a color $c(v) \in \{1,\cdots,k\}$ such that, for every colored neighbor $u$ of $v$, $c(u)\neq c(v)$ ({\it i.e.}, at every turn, the partial coloring must be {\it proper}). Alice wins if, eventually, all vertices of $G$ are colored; Bob wins otherwise, {\it i.e.}, if at some turn, there exists an uncolored vertex that has, for each $1 \leq i \leq k$, a neighbor already colored with color $i$. Let $\chi_g(G)$ be the smallest integer $k$ such that Alice has a winning strategy in the $k$-coloring game, {\it i.e.}, Alice can win whatever be the strategy of Bob. Note that, for any graph $G$ with maximum degree $\Delta(G)$, $\chi_g(G) \leq \Delta(G)+1$ since, for every vertex $v \in V$, Alice can always choose a color $c(v)\leq \Delta(G)+1$ for $v$ that does not appear in its neighborhood.

The coloring game was introduced by Brams in the context of map coloring and was described by Gardner in 1981 in his ``Mathematical Games'' column of Scientific American~\cite{gardner81}. It remained unnoticed until Bodlaender \cite{bodlaender91} reinvented it in 1991 as the ``Coloring Construction Game''. It was only recently proved that deciding whether $\chi_g(G)\leq k$ is PSPACE-complete~\cite{costa20}. Thereafter, many games based on classical variants of the coloring problem were proved PSPACE-complete, such as the greedy coloring game \cite{lima22} and the connected coloring game \cite{lima23}.

From the above complexity results, it is natural to investigate the coloring game in more restricted graph classes. This has been done in trees~\cite{faigle1993game,FRS23,DBLP:journals/entcs/FurtadoDFG19,DBLP:journals/rairo/FurtadoPDF23}. In particular, it has been proved that $\chi_g(T)\leq 4$ for every tree $T$~\cite{faigle1993game} and sufficient conditions for trees $T$ ensuring that $\chi_g(T) = 4$ are provided in~\cite{FRS23}. 
In other graph classes, it was proved that $\chi_g(G)\leq 7$ in outerplanar graphs \cite{kierstad94}, $\chi_g(G)\leq 3k+2$ in partial $k$-trees~\cite{zhu00} and $\chi_g(G)\leq 5$ in cacti~\cite{game-cactus07}.
Moreover, $\chi_g(G)\leq 17$~\cite{zhu08} in planar graphs, $\chi_g(G)\leq 13$ in planar graphs with girth at least 4~\cite{sekiguchi14} and $\chi_g(G)\leq 5$ in planar graphs with girth at least 7~\cite{nakprasit18}.

Finally, tori and grids with at most $3$ rows have been considered in~\cite{DBLP:journals/ipl/RaspaudW09}.
Here, we consider the grids with four rows and prove that $\chi_g(P_4 \x P_n)\leq 4$, for every $n\in \mathbb{N}$.

\section{Preliminaries}

Given two graphs $G$ and $H$, $G'=G \x H$ denotes the Cartesian product of $G$ and $H$, that is the graph with vertex set $V(G) \times V(H)$ and, for every $a,b \in V(G)$ and $x,y \in V(H)$, there is an edge $\{(a,x),(b,y)\}$ in $G'$ if and only if $a=b$ and $\{x,y\} \in E(H)$ or if $x=y$ and $\{a,b\} \in E(G)$.
For any $m$ and $n$, the grid of dimension $m \times n$ is isomorphic to the Cartesian product of the two paths $P_m$ and $P_n$.
In what follows, we will then use the notation $P_m \x P_n$ to denote the $m\times n$ grid.
The same holds for the cylinder $P_m\x C_n$ which is the Cartesian product of a path $P_m$ and a cycle $C_n$.

\begin{claim}
    $\chi_g(P_4\x P_n)\leq 5$ and  $\chi_g(P_4\x C_n) \leq 5$
\end{claim}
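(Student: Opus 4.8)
The plan is to reduce the statement directly to the trivial bound $\chi_g(G) \leq \Delta(G)+1$ recorded in the introduction, by checking that both families of graphs considered here have maximum degree at most $4$.

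First I would compute the degrees. Write a vertex of $P_4 \x P_n$ as a pair $(a,x)$ with $a \in V(P_4)$ and $x \in V(P_n)$. By the definition of the Cartesian product, its neighbors are exactly the vertices $(a',x)$ with $a'$ adjacent to $a$ in $P_4$, together with the vertices $(a,x')$ with $x'$ adjacent to $x$ in $P_n$. Since both $P_4$ and $P_n$ have maximum degree $2$, every vertex has at most $2+2=4$ neighbors, hence $\Delta(P_4 \x P_n) \leq 4$. The same argument applies verbatim to $P_4 \x C_n$ for $n \geq 3$: replacing the factor $P_n$ by $C_n$ only changes one path-factor into a cycle, which still has maximum degree $2$, so $\Delta(P_4 \x C_n) \leq 4$ as well.

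Having bounded the maximum degree, I would conclude at once: applying the inequality $\chi_g(G) \leq \Delta(G)+1$ to $G = P_4 \x P_n$ and to $G = P_4 \x C_n$ yields $\chi_g(P_4 \x P_n) \leq 5$ and $\chi_g(P_4 \x C_n) \leq 5$, as claimed. There is essentially no obstacle in this argument; its entire content is the elementary degree count, and the greedy bound does the rest. The value of the claim is only as a baseline establishing that $5$ colors trivially suffice, so that the real difficulty is deferred to the main theorem, where this must be sharpened to $4$ colors by providing an explicit winning strategy for Alice rather than relying on the degree bound alone.
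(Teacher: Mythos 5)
Your proposal is correct and is exactly the paper's argument: the paper also derives the claim directly from the bound $\chi_g(G)\leq \Delta(G)+1$ stated in the introduction, noting that the maximum degree of both $P_4\x P_n$ and $P_4\x C_n$ is at most $4$. Your explicit degree count via the Cartesian product definition is just a slightly more detailed write-up of the same one-line observation.
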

\begin{proof}
    This is direct since the maximum degree is at most $4$: if five colors can be used, then at least one color is available for any uncolored vertex.
\end{proof}

The notation $v_{i,j}$ is used for the vertex on the row $i$ and column $j$ of a grid.
We consider that the rows are numbered from bottom to top and the columns are numbered from left to right. 
Moreover, in this paper we consider only proper colorings with $4$ colors $\{1,2,3,4\}$.
For any vertex $v$, we denote the color of the vertex $v$ by $c(v) \in \{0,1,2,3,4\}$, the value $c(v) = 0$ standing for an uncolored vertex.

\begin{claim}\label{claim:lower}
Let $n \geq 18$. Then, $\chi_g(P_4\x P_n) \geq 4$ and $ \chi_g(P_4\x C_n) \geq 4$.
\end{claim}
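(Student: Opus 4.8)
The plan is to show that Alice has no winning strategy with three colors, i.e., to exhibit a winning strategy for \textbf{Bob} in the $3$-coloring game on $P_4\x P_n$ and on $P_4\x C_n$ for $n\ge 18$; this immediately gives $\chi_g\ge 4$ in both cases. Throughout, the only colors available are $\{1,2,3\}$, and Bob wins as soon as some uncolored vertex has neighbors colored with all of $1,2,3$ (a \emph{trap}). The engine of the strategy is a \emph{double threat}: I aim to reach a position, on Alice's turn, in which there are two uncolored vertices $u_1,u_2$ lying in disjoint regions such that each $u_i$ already sees exactly two distinct colors among its colored neighbors and still has an uncolored neighbor on which Bob may legally place the missing third color. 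Because only three colors exist, neutralizing the threat at $u_i$ forces a move confined to the region of $u_i$ (either coloring $u_i$ itself with its unique admissible color, or destroying its free slot); since the two regions are disjoint, Alice's single move defuses at most one of them, and Bob then completes the trap on the other.

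The core gadget lets Bob create both threats at once. Fix interior columns and consider the vertex $w=v_{2,j}$ together with its horizontal neighbors $u_1=v_{2,j-1}$ and $u_2=v_{2,j+1}$. Suppose Bob has arranged the \emph{pre-fork} configuration: $v_{3,j-1}=v_{3,j+1}=1$; the vertices $w,u_1,u_2$ are uncolored; and the slots $v_{1,j-1},v_{1,j+1}$ are uncolored and admissible for color $3$. Bob now plays the fork $w=2$. Instantly $u_1$ sees $\{1,2\}$ (from $v_{3,j-1}$ and $w$) with free slot $v_{1,j-1}$ for color $3$, and symmetrically $u_2$ sees $\{1,2\}$ with free slot $v_{1,j+1}$; this is exactly the double threat, whose two regions (left and right of $w$) are disjoint. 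Whichever side Alice answers, Bob colors the opposite slot with $3$ and traps that $u_i$. The gadget uses only rows $1,2,3$ and three consecutive columns, and its vertices have the required degrees in both $P_4\x P_n$ and $P_4\x C_n$, so the \emph{same} strategy settles both statements.

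The only remaining, and \emph{main}, difficulty is to force the pre-fork configuration despite Alice's interference: Alice may try to preempt a gadget by coloring its center $w$, by coloring one of $u_1,u_2$, or by filling a slot with $1$ or $2$. The plan is to exploit $n\ge 18$ to keep several pairwise column-disjoint candidate gadgets alive simultaneously (at least six disjoint triples of columns when $n\ge 18$, chosen after Alice's first move so as to avoid it) and to advance them in parallel. Since Alice colors one vertex per turn she can damage at most one candidate per turn, whereas each gadget is redundant — color $1$ can be supplied to $u_i$ from more than one common neighbor, and each $u_i$ retains more than one uncolored neighbor able to host color $3$ — so a single Alice move cannot fully destroy a gadget. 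A bookkeeping/pigeonhole argument then guarantees that some candidate reaches the pre-fork configuration with $w,u_1,u_2$ still uncolored and both slots free, at which point Bob forks and wins. I expect the careful accounting that Alice cannot dismantle the partially built gadgets faster than Bob completes one to be the crux of the write-up; everything else is a short, local verification that Bob's colorings are legal.
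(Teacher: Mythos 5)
Your overall plan (exhibit a winning Bob strategy with three colors, built on a double threat that Alice cannot parry in one move) is in the same spirit as the paper's proof, and your endgame fork is sound in principle. But there are two problems, one local and one essential. The local one: in the three-column gadget as you specify it, the two threats are \emph{not} disjoint. After Bob plays $w=v_{2,j}=2$, Alice can answer $c(v_{1,j})=3$, which is legal (its only colored neighbor is $w=2$) and is adjacent to \emph{both} slots $v_{1,j-1}$ and $v_{1,j+1}$, so it poisons both completions at once. Your parenthetical appeal to redundancy ($v_{2,j\pm2}$ as alternative hosts for color $3$) can repair this, but then the pre-fork configuration must also control columns $j\pm2$ (and their neighbors), so the gadget spans at least five columns and carries several extra admissibility conditions — the "short, local verification" is not as short as claimed.

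The essential gap is the part you defer to "bookkeeping/pigeonhole", and as stated it fails. Setting up one candidate costs Bob two moves ($v_{3,j-1}=v_{3,j+1}=1$), while Alice can fully neutralize that candidate with a \emph{single} move: she colors the center $w=v_{2,j}$ with color $1$ (always legal, since $w$ is not adjacent to either setup stone), after which each $u_i$ sees color $1$ twice and no fork through $w$ remains. So each candidate is a two-for-one trade against Bob. The observation that "Alice can damage at most one candidate per turn" buys nothing, because Bob also advances at most one candidate per turn: if Alice simply answers in whichever gadget Bob last touched, he never completes a pre-fork, no matter how many disjoint candidates $n\geq 18$ provides. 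To win, Bob's \emph{intermediate} moves must already be threats that force Alice's reply, and this is precisely how the paper's proof works: after $v_{2,j}=1$, Bob's second move $v_{2,j+2}$ (with its color chosen adaptively to duplicate any stone Alice placed on $v_{1,j}$ or $v_{3,j}$) immediately threatens to trap $v_{2,j+1}$, which has \emph{two} available completion slots ($v_{1,j+1}$ and $v_{3,j+1}$), so Alice is forced to play exactly $v_{2,j+1}=c'$; only then does Bob spring a genuine double threat (via $v_{1,j+2}$/$v_{4,j+1}$, or $v_{4,j+2}=3$ followed by the $v_{3,j+1}$/$v_{3,j+3}$ fork). Note also that the paper needs $n\geq 18$ only to find nine consecutive empty columns after Alice's first move, so that w.l.o.g. one side of $j$ is untouched — a single clean region, not six parallel ones. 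Without a forcing mechanism of this kind, your parallel-construction argument cannot be completed, so the proposal as written does not prove the claim.
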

\begin{proof}
Let us consider a game with three available colors.
    Consider the first turn of Bob and let $j$ be a column such no vertex has been colored yet in columns $k \in \{j-4,\cdots,j+3,j+4\}$ (such $j$ exists since $n\geq 18$). Bob colors $v_{2,j}$ with color $1$. W.l.o.g., next move of Alice does not color any vertex in columns $k \in \{j+1,j+2,j+3,j+4\}$. Then, Bob colors $v_{2,j+2}$ with color $c=2$ if Alice has not colored $v_{1,j}$ nor $v_{3,j}$, and Bob colors $v_{2,j+2}$ with color $c \in \{2,3\}$ if Alice has colored $v_{1,j}$ or $v_{3,j}$ with color $c\neq 1$. If, during her next turn, Alice does not color $v_{2,j+1}$, then at least one of $v_{1,j+1}$ or $v_{3,j+1}$ can be colored with the color in $\{2,3\} \setminus \{c\}$ and, now, $v_{2,j+1}$ has three neighbors with distinct colors. Hence, Alice must have colored $v_{2,j+1}$ with the color $c'$ in $\{2,3\} \setminus \{c\}$. If $v_{1,j}$ (resp. $v_{3,j}$) is colored with $c$, then Bob colors $v_{1,j+2}$ (resp. $v_{4,j+1}$) with color $1$ and then $v_{1,j+1}$ (resp. $v_{3,j+1}$) cannot be colored anymore. Otherwise, $v_{1,j}$ and $v_{3,j}$ are not colored and $c=2$ and $c'=3$. Bob colors $v_{4,j+2}$ with color $3$. If Alice does not color $v_{3,j+2}$ then Bob colors  $v_{3,j+1}$ or $v_{3,j+3}$ with $1$ and $v_{3,j+2}$ cannot be colored anymore. Otherwise, Alice colors $v_{3,j+2}$ with $1$ and Bob colors $v_{4,j+1}$ with $2$ and $v_{3,j+1}$ cannot be colored anymore.
\end{proof}

Let $\chi^{\cal B}_g(G)$ be the smallest number of colors ensuring that Alice wins when Bob starts.

\begin{lemma}\label{lem:easy}
Let $n \geq 9$. Then, $\chi^{\cal B}_g(P_4\x P_n) =  \chi^{\cal B}_g(P_4\x C_n) = 4$
\end{lemma}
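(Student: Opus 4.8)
The plan is to prove both equalities by a matching lower and upper bound, the first being short and the second carrying all the content.

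For the lower bound, note that since both boards have maximum degree $4$, the inequality $\chi^{\cal B}_g\ge 4$ is exactly the assertion that Bob, moving first, wins the $3$-coloring game. I would obtain this by re-running the strategy of Claim~\ref{claim:lower}. There Bob only needed a column $j$ together with its neighbouring columns $\{j-4,\dots,j+4\}$ to be uncolored at the moment of his first intervention. When Bob moves first he does not have to dodge a previous move of Alice, so he may pick such a central column immediately: on a path this needs $5\le j\le n-4$, i.e. $n\ge 9$, and on a cycle it needs $9$ distinct columns, again $n\ge 9$. From the coloring of $v_{2,j}$ onward the argument is unchanged and never uses the boundary, so it applies identically to $P_4\x P_n$ and to $P_4\x C_n$, giving $\chi^{\cal B}_g(P_4\x P_n),\chi^{\cal B}_g(P_4\x C_n)\ge 4$.

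For the upper bound I must exhibit a strategy with which Alice, as the \emph{second} player, wins the $4$-coloring game. The first reduction is to the local loss condition: with four colors and maximum degree four, Alice loses only if at some moment an uncolored vertex of degree exactly $4$ sees all four colors among its neighbours; call such a vertex \emph{dead}. In both boards the degree-$4$ vertices are precisely those of rows $2$ and $3$ (on the path, only in columns $2,\dots,n-1$), while the vertices of rows $1$ and $4$ and the four corners have degree at most $3$ and can never be dead. Thus Alice's only task is to guarantee that no row-$2$ or row-$3$ vertex ever becomes dead. The strategy I would use is a response (pairing) strategy anchored on the vertical dominoes $\{v_{1,j},v_{2,j}\}$ and $\{v_{3,j},v_{4,j}\}$: whenever Bob colors a vertex $u$, Alice colors its domino partner. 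This keeps, after each of Alice's moves, every domino either fully colored or fully uncolored, so an uncolored interior vertex always retains an uncolored same-column neighbour and hence sees at most three colors. On top of this I would maintain a color invariant on each pair of consecutive columns recording which colors already occur in rows $2$ and $3$, using Alice's freedom to choose the color of each partner she places to preserve it.

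The main obstacle is precisely this color bookkeeping, since the pairing alone does not finish the job: when Bob colors the outer cell $v_{1,j}$, Alice is forced to answer on $v_{2,j}$, and Bob may have previously loaded $v_{3,j}$, $v_{2,j-1}$ and $v_{2,j+1}$ with three distinct colors, leaving $v_{2,j}$ dead before Alice can react. The heart of the proof is therefore to show that, by choosing the colors of the outer cells (and, where needed, deviating from the pairing to pre-empt a threat), Alice can always prevent three distinct colors from surrounding a still-uncolored interior vertex whose partner Bob is about to take; equivalently, that Bob can never create two simultaneous unrecoverable threats. I expect this to reduce to a finite case analysis on the configuration of the two or three columns around Bob's last move. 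Here the two boards differ only in Alice's favour: on the cylinder every interior vertex is equivalent under the rotational symmetry, so one local invariant handles all of them, whereas on the path the boundary columns carry only lower-degree vertices, which are never dead and hence only ease Alice's task. The hypothesis $n\ge 9$ is what gives Alice enough room to set up and maintain the invariant, matching the threshold forced by the lower bound.
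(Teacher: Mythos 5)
Your lower bound is fine and matches the paper: Bob, moving first, picks a central column and replays the strategy of Claim~\ref{claim:lower}, which only needs nine consecutive empty columns. The gap is in the upper bound, where you never actually produce Alice's strategy. You anchor your pairing on the \emph{adjacent} dominoes $\{v_{1,j},v_{2,j}\}$ and $\{v_{3,j},v_{4,j}\}$, which is the wrong pairing: since partners are adjacent, Alice can never copy Bob's color, the two same-column neighbours $v_{1,j}$ and $v_{3,j}$ of an interior vertex $v_{2,j}$ lie in \emph{different} pairs and so never get equalized, and the dead-vertex threat you yourself describe (Bob loading $v_{3,j}$, $v_{2,j-1}$, $v_{2,j+1}$ with three distinct colors and then taking $v_{1,j}$) is real. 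You then defer the entire resolution to an unspecified ``color invariant on each pair of consecutive columns'' and a finite case analysis that you ``expect'' to work but do not carry out; as written, the heart of the upper bound is missing.

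The paper's proof dissolves this difficulty by choosing the \emph{distance-two} pairing within each column, $\{v_{1,j},v_{3,j}\}$ and $\{v_{2,j},v_{4,j}\}$: whenever Bob colors $v_{a,j}$ with color $c$, Alice colors $v_{b,j}$ with $|a-b|=2$ using the \emph{same} color $c$. Because partners are non-adjacent this copy is always legal (the horizontal neighbours $v_{b,j\pm1}$ are the partners of $v_{a,j\pm1}$ and hence carry the same colors, and any common column neighbour of $v_{a,j}$ and $v_{b,j}$ already excluded $c$ for Bob), and the resulting invariant --- after each of Alice's moves, every pair is monochromatic or fully uncolored --- immediately guarantees that every degree-$4$ vertex sees at most three distinct colors among its neighbours, so no vertex ever dies. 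No color bookkeeping beyond copying is needed. Note also that your final remark is mistaken: the hypothesis $n\ge 9$ plays no role in the upper bound (the mirroring strategy works for every $n$, on both $P_4\x P_n$ and $P_4\x C_n$); it is needed only for the lower bound, i.e.\ to force $\chi^{\cal B}_g\ge 4$.
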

\begin{proof}
The lower bound is similar to the previous one (except that we only need $n \geq 9$ since Bob starts). 
A winning strategy for Alice proceeds as follows. Each time Bob colors a vertex $v_{a,j}$ with color $c$, Alice colors the (unique) vertex $v_{b,j}$ on column $j$ at distance $2$ from $v_{a,j}$ (i.e., $|a-b|=2$) with the color $c$. It is clear that Alice can always follow this strategy since $c(v_{a,j-1})=c(v_{b,j-1})$,  $c(v_{a,j+1})=c(v_{b,j+1})$ and both neighbors of $v_{b,j}$ in column $j$ have the same color. Hence, whenever a vertex $v$ is being colored, either its two neighbors on the same column are uncolored, or they are colored with the same color, or $v$ has at most three neighbors. In all cases, at most four colors are sufficient.
\end{proof}

Unfortunately, the simple strategy described above does not work when Alice starts. Indeed, let $n\geq 6$ and assume that, after her first move, Alice follows the strategy above. let $i\leq n$ be the column of the first vertex colored by Alice and let $j\leq n$, such that $i \notin \{j-1,j,j+1\}$. Bob colors $v_{1,j-1}$ and $v_{2,j+1}$ with color $1$, $v_{2,j-1}$ and $v_{1,j+1}$ with color $2$. Then, since Alice has started, Bob can ensure that she is the first to color some vertex of column $j$ (unless the game has stopped before).
W.l.o.g., she colors the vertex $v_{a,j}$ with color $3$, then Bob colors the vertex $v_{b,j}$ such that $|a-b|=2$ with color $4$. The common neighbor of $v_{a,j}$ and $v_{b,j}$ will require a fifth color.

\section{Graph coloring game in \texorpdfstring{$P_4 \x P_n$}{P4xPn}}

Let us consider a partial proper $4$-coloring of the grid $G=P_4 \x P_n$. The \emph{columns} of $G$ are the copies of $P_4$ in $P_4 \x P_n$. A column is \emph{empty} if all its vertices are uncolored. 
A \emph{block} is any maximal subgrid with at least one colored vertex in each of its columns. As an exception\footnote{This exception is for technical reasons, just to avoid extra cases in the proof below.}, if the last column is empty and the penultimate column is not, we include the  last column in the block of the penultimate column. 


Note that a block is a subgrid $B$ of the form $P_4 \x P_m$, with $m\leq n$. The \emph{right border} (resp. \emph{left border}) of a block $B$ is its rightmost (resp. leftmost) column. Note that, if the right border (resp. left border) of a block is the $j^{th}$ column of $G$, then no vertex of the $j+1^{th}$ column (resp. of the $j-1^{th}$ column) of $G$ is colored.
When a block consists of a single column (except for the first column), we consider it as a left border. That is, when we mention the right border of a block, this implies that this block has at least two columns  (this will be important in the design of the algorithm and in its proof).
A vertex is \emph{inside} a block $B$ if it is a vertex of $B$ but not of its borders.
A right (resp. left) border of a block at column $1<j<n$ is \emph{free} if columns $j+1$ and $j+2$ (resp. $j-1$ and $j-2$) are empty.

\paragraph{Safe and sound vertices.} Let us now define some status of the vertices that will be important in what follows. Intuitively, a vertex $v$ of a block is called {\it safe} if whatever be the strategies of Alice and Bob (with $4$ colors) starting from this state ({\it i.e.} from the current partial $4$-coloring), the vertex $v$ will eventually be colored at the end of the game. More precisely, a vertex is \emph{safe} if (a) it is already colored, or (b) it has at most three neighbors, or (c) at least two of its neighbors are already colored with the same color, or (d) it has three neighbors colored with distinct colors $c,c',c''$ and its last neighbor is uncolored and adjacent to a vertex colored with the fourth color (which makes it always available for $v$).
In other words, for any uncolored safe vertex, at least one of the four colors is available to color this vertex at any moment of the game.

An unsafe (not safe) vertex $v$ in a block $B$ is \emph{sound} if it is associated with two uncolored safe neighbors $x$ and $y$ in $B$, called the \emph{doctors} of $v$, in such a way that every safe vertex is the doctor of at most one (unsafe) vertex called its \emph{patient}.
The intuition for soundness is that, if Bob colors a doctor $x$ or $y$ of a patient $v$, then, in the next turn, Alice can always use an available color for $v$ and make $v$ safe.

As an example, consider the configuration of Figure \ref{fig-firstex}, showing the middle of a block.
Recall that the rows are numbered $1$ to $4$ from bottom to top.
Notice that every vertex in column $j$ to $j+2$ is safe or sound. For this, note that every vertex in columns $j$ to $j+2$ is safe, except $v_{2,j+1}$ and $v_{3,j+2}$, which are sound, since we can associate the doctors $v_{2,j}$ and $v_{1,j+1}$ to the patient $v_{2,j+1}$, and the doctors $v_{2,j+2}$ and $v_{4,j+2}$ to the patient $v_{3,j+2}$. In this case, the safe uncolored vertex $v_{2,j+2}$ has two unsafe neighbors $v_{2,j+1}$ and $v_{3,j+2}$, but this is not a problem since $v_{2,j+2}$ is the doctor of only one patient ($v_{3,j+2}$).

\begin{figure}[ht]\centering
\begin{tikzpicture}[scale=0.195]
\tikzstyle{vertex}=[draw,circle,fill=black,inner sep=0pt, minimum size=1pt]

\begin{scope}[xshift=0cm, yshift=0cm, scale=2.5]
\foreach \i in {1,2,3,4}{
\path[draw, thin] (\i,0)--(\i,4);}
\foreach \j in {0,1,2,3,4}{
\path[draw, densely dotted] (0,\j)--(5,\j);} 
\foreach \j in {0,1,2,3,4}{ 
\path[draw, thin] (1,\j)--(4,\j);} 
\path[draw, very thick]  (0,0) --(1,0)--(2,0) --(3,0)--(4,0)--(5,0) ;
\path[draw, very thick]  (0,4) --(1,4)--(2,4) --(3,4)--(4,4)--(5,4) ;
\draw (1.5,4.5) node {$j$};
\draw (1.5,1.5) node {$0$};
\draw (1.5,0.5) node {$c$};
\draw (0.5,1.5) node {$c'$};
\draw (1.5,2.5) node {$c'$};
\draw (2.5,0.5) node {$0$};
\draw (2.5,1.5) node {$0$};
\draw (2.5,2.5) node {$c$};
\draw (2.5,3.5) node {$0$};
\draw (3.5,0.5) node {$c$};
\draw (3.5,1.5) node {$0$};
\draw (3.5,2.5) node {$0$};
\draw (3.5,3.5) node {$0$};
\draw (4.5,2.5) node {$0$};
\draw (4.5,1.5) node {$c$};
\end{scope}
\end{tikzpicture}

\caption{Illustration of the notions of safeness and soundness in the middle of a block. Note that a vertex is depicted by a square. The bold lines figure the surrounding of the block. A $0$ in a cell means that the corresponding vertex is not colored, $c$ and $c'$ denote any colors (not $0$). It can be deduced from the figure that $c'\neq c$ since there are two adjacent vertices colored $c$ and $c'$ respectively (and the coloring is proper). A white cell indicates that the status of the corresponding vertex is not constrained (it may be colored or not). The $j$ above denotes the index of the corresponding column.}
\label{fig-firstex}
\end{figure}

The intuition for the proof of the main theorem of this section is the following:
during the game, after Alice's move, every vertex of a block is safe or sound.
Actually, this is not completely sufficient and we will need to consider a few exceptions defined later.

\paragraph{Borders' configurations.} Let us describe some particular configurations for the border of a block. Let $j$ be the index of the border column of the block.

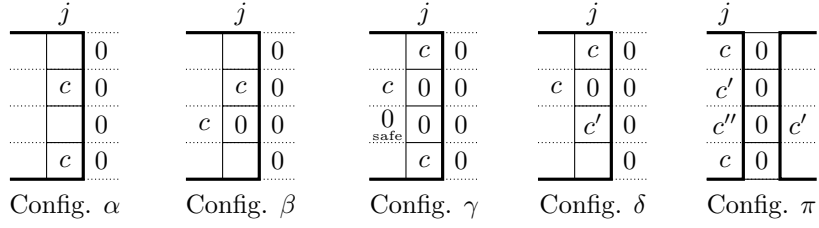
\begin{figure}[hb]\centering
\begin{tikzpicture}[scale=0.195]
\tikzstyle{vertex}=[draw,circle,fill=black,inner sep=0pt, minimum size=1pt]

\begin{scope}[xshift=0cm, yshift=0cm, scale=2.5]
\foreach \i in {2}{
\path[draw, thin] (\i,0)--(\i,4);}
\foreach \j in {0,1,2,3,4}{ 
\path[draw, densely dotted] (1,\j)--(4,\j);}
\foreach \j in {0,1,2,3,4}{
\path[draw, thin] (2,\j)--(3,\j);} 
\path[draw, very thick] (1,0)--(3,0)--(3,4)--(1,4);
\draw (2.5,4.5) node {$j$};
\draw (2.5,0.5) node {$c$};
\draw (2.5,2.5) node {$c$};
\draw (3.5,0.5) node {$0$};
\draw (3.5,1.5) node {$0$};
\draw (3.5,2.5) node {$0$};
\draw (3.5,3.5) node {$0$};
\draw (2.5,-0.75) node {Config. $\alpha$}; 
\end{scope}

\begin{scope}[xshift=12cm, yshift=0cm, scale=2.5]
\foreach \i in {2,3}{
\path[draw, thin] (\i,0)--(\i,4);}
\foreach \j in {0,1,2,3,4}{ 
\path[draw, densely dotted] (1,\j)--(4,\j);} 
\foreach \j in {0,1,2,3,4}{ 
\path[draw, thin] (2,\j)--(3,\j);} 
\path[draw, very thick]  (1,0)--(3,0)--(3,4)--(1,4);
\draw (2.5,4.5) node {$j$};
\draw (1.5,1.5) node {$c$};
\draw (2.5,0.5) node {};
\draw (2.5,1.5) node {$0$};
\draw (2.5,2.5) node {$c$};
\draw (2.5,3.5) node {};
\draw (3.5,0.5) node {$0$};
\draw (3.5,1.5) node {$0$};
\draw (3.5,2.5) node {$0$};
\draw (3.5,3.5) node {$0$};
\draw (2.5,-0.75) node {Config. $\beta$}; 
\end{scope}

\begin{scope}[xshift=22cm, yshift=0cm, scale=2.5]
\foreach \i in {3,4}{
\path[draw, thin] (\i,0)--(\i,4);}
\foreach \j in {0,1,2,3,4}{ 
\path[draw, densely dotted] (2,\j)--(3,\j);
\path[draw, densely dotted] (4,\j)--(5,\j);} 
\foreach \j in {0,1,2,3,4}{ 
\path[draw, thin] (3,\j)--(4,\j);} 
\path[draw, very thick]  (2,0)--(4,0)--(4,4)--(2,4) ;
\draw (3.5,4.5) node {$j$};
\draw (2.5,2.5) node {$c$};
\draw (2.5,1.65) node {$0$};
\draw (2.5,1.2) node {\tiny safe};
\draw (3.5,0.5) node {$c$};
\draw (3.5,1.5) node {$0$};
\draw (3.5,2.5) node {$0$};
\draw (3.5,3.5) node {$c$};
\draw (4.5,0.5) node {$0$};
\draw (4.5,1.5) node {$0$};
\draw (4.5,2.5) node {$0$};
\draw (4.5,3.5) node {$0$};
\draw (3.5,-0.75) node {Config. $\gamma$}; 
\end{scope}

\begin{scope}[xshift=36cm, yshift=0cm, scale=2.5]
\foreach \i in {2,3}{
\path[draw, thin] (\i,0)--(\i,4) ;}
\foreach \j in {0,1,2,3,4}{ 
\path[draw, densely dotted] (1,\j)--(4,\j);} 
\foreach \j in {0,1,2,3,4}{ 
\path[draw, thin] (2,\j)--(3,\j);} 
\path[draw, very thick]  (1,0)--(3,0)--(3,4)--(1,4) ;
\draw (2.5,4.5) node {$j$};
\draw (1.5,2.5) node {$c$};
\draw (2.5,3.5) node {$c$};
\draw (2.5,2.5) node {$0$};
\draw (2.5,1.5) node {$c'$};
\draw (3.5,0.5) node {$0$};
\draw (3.5,1.5) node {$0$};
\draw (3.5,2.5) node {$0$};
\draw (3.5,3.5) node {$0$};
\draw (2.5,-0.75) node {Config. $\delta$}; 
\end{scope}

\begin{scope}[xshift=50cm, yshift=0cm, scale=2.5]
\foreach \i in {1,2}{
\path[draw, thin] (\i,0)--(\i,4);}
\foreach \j in {0,1,2,3}{ 
\path[draw, densely dotted] (0,\j)--(3,\j);} 
\foreach \j in {0,1,2,3,4}{ 
\path[draw, thin] (1,\j)--(2,\j) ;} 
\path[draw, very thick]  (0,0)--(1,0)--(1,4)--(0,4);
\path[draw, very thick] (3,0)--(2,0)--(2,4)--(3,4);
\draw (0.5,4.5) node {$j$};
\draw (0.5,3.5) node {$c$};
\draw (0.5,2.5) node {$c'$};
\draw (0.5,1.5) node {$c''$};
\draw (0.5,0.5) node {$c$};
\draw (1.5,0.5) node {$0$};
\draw (1.5,1.5) node {$0$};
\draw (1.5,2.5) node {$0$};
\draw (1.5,3.5) node {$0$};
\draw (2.8,0.5) node {};
\draw (2.5,1.5) node {$c'$};
\draw (2.5,3.5) node {};
\draw (1.5,-0.75) node {Config. $\pi$}; 
\end{scope}
\end{tikzpicture}

\caption{Illustration of the different configurations in the case of a right border (column $j$).
The colors $c,c',c''\ne 0$ are pairwise distinct.
Each configuration has its symmetrical counterpart (according to the horizontal symmetry axis of the grid). The configurations for the left borders are defined symmetrically (according to the vertical symmetry axis of the grid).}
\label{fig:configs}
\end{figure}

\begin{description}
\item[Configuration $\alpha$.] A right/left border (at column $j$) is in configuration $\alpha$ if $c(v_{1,j})=c(v_{3,j}) \neq 0$ and, moreover, none of $v_{2,j}$ and $v_{4,j}$ are doctors, and if they are colored, then $c(v_{2,j})\neq c(v_{4,j})$ (or, symmetrically, if $c(v_{2,j})=c(v_{4,j}) \neq 0$  and moreover, if none of $v_{1,j}$ and $v_{3,j}$ are doctors and if they are colored, then $c(v_{1,j})\neq c(v_{3,j})$). 

Recall that, if a block consists of  a single column  (except for the first column) in configuration $\alpha$, we consider that it is a left border to avoid ambiguity.

\item[Configuration $\beta$.] A right border (at column $j$) is in configuration $\beta$ if $c(v_{2,j-1})=c(v_{3,j}) \neq 0$ and $c(v_{2,j})=0$ (or, symmetrically, if $c(v_{3,j-1})=c(v_{2,j}) \neq 0$ and $c(v_{3,j})=0$).
The $\beta$ configuration for a left border is defined symmetrically (according to the vertical symmetry axis of the grid).

\item[Configuration $\gamma$.] A right border (at column $j$) is in configuration $\gamma$ if $c(v_{1,j})=c(v_{4,j})=c(v_{3,j-1})=c\neq 0$, $c(v_{2,j-1})=c(v_{2,j})=c(v_{3,j})=0$ and $v_{2,j-1}$ is safe. 
The symmetric according to the horizontal axis of the grid is also a configuration $\gamma$.
Analogously for a left border.

Observe that $v_{2,j-1}$ being safe implies that either $c(v_{2,j-2})=c$ or $c(v_{2,j-2})=c(v_{1,j-1})$.
Therefore $v_{2,j-1}$ is not the doctor of a vertex inside its block, and a block with a border in configuration $\gamma$ contains at least $3$ columns.


\item[Configuration $\delta$.] A right border (at column $j$) is in configuration $\delta$ if $c(v_{3,j-1}) = c(v_{4,j})=c$ and $c(v_{2,j})=c'\neq c$.
The symmetric according to the horizontal axis of the grid is also a configuration $\delta$, and the configuration $\delta$ for a left border is defined symmetrically.

\item[Configuration $\pi$.] A right border  (at column $j$) is in configuration $\pi$ if $c(v_{1,j})=c(v_{4,j})=c$, $c(v_{3,j})=c(v_{2,j+2})=c'$, $c(v_{2,j})=c''$ and $c,c',c''\ne 0$ are pairwise distinct.
The symmetric according to the horizontal axis of the grid is also a configuration $\pi$, and the configuration $\pi$ for a left border is defined symmetrically. 
\end{description}

The five configurations are depicted in Figure~\ref{fig:configs} (in the case of a right border).
In the proof of the next theorem, the notation is the same as in Figure~\ref{fig:configs}, assuming the orientation of the configurations with right border at column $j$,
except when it will be explicitly mentioned that a left border is considered.
In order to avoid ambiguity, if a border is in both configurations $\alpha$ and $\beta$, then we say that it is in configuration $\beta$.

Note that all vertices in the border of a block in configuration $\alpha$, $\beta$, $\delta$ or $\pi$ are safe.
Also note that every vertex of configuration $\gamma$ shown in Figure \ref{fig:configs} is safe, except $v_{2,j}$, which is sound since we can associate the doctors $v_{2,j-1}$ and $v_{3,j}$ with $v_{2,j}$ (their only patient). 

\begin{observation}
Every vertex of a border of a block in configuration $\alpha$, $\beta$, $\gamma$, $\delta$ or $\pi$ is safe or sound.
\end{observation}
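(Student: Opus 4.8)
The plan is to prove the observation by a direct case analysis over the five configurations, handling each of the four border vertices $v_{1,j},v_{2,j},v_{3,j},v_{4,j}$ of column $j$ separately and matching it either to one of the safeness clauses (a)--(d) or, when it is genuinely unsafe, to a pair of doctors witnessing its soundness. First I would cut the work down by symmetry, since the horizontal and vertical symmetric variants of each configuration, as well as the left-border versions, follow from the corresponding symmetry of the grid; it therefore suffices to treat one right-border orientation per configuration. I would also record once and for all two structural facts used repeatedly: that $v_{1,j}$ and $v_{4,j}$ lie on a boundary row and so have at most three neighbours (hence are safe by (b) when uncolored and by (a) when colored), and that column $j+1$ is empty because $j$ is a right border, so the rightward neighbour of each border vertex is uncolored.

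For four of the five configurations the verification is immediate and carries no subtlety. In $\pi$ all four vertices $v_{1,j},v_{2,j},v_{3,j},v_{4,j}$ are colored, so each is safe by (a). In $\alpha$ the vertices $v_{1,j}$ and $v_{3,j}$ share color $c$, which makes $v_{2,j}$ safe by (c) (two of its neighbours have color $c$), while $v_{4,j}$ is safe by (b); the symmetric variant in which $v_{2,j},v_{4,j}$ share a color is dual. In $\beta$ the only uncolored border vertex needing attention is $v_{2,j}$, whose neighbours $v_{3,j}$ and $v_{2,j-1}$ are both colored $c$, giving safeness by (c). In $\delta$ the one uncolored border vertex to check is $v_{3,j}$, whose neighbours $v_{4,j}$ and $v_{3,j-1}$ are both colored $c$, again yielding (c). The remaining vertices in these cases are colored or lie on a boundary row.

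The only configuration that forces us to use soundness is $\gamma$, and this is where the single real obstacle lies. There $v_{1,j},v_{4,j}$ are colored (safe by (a)) and $v_{3,j}$ is uncolored with neighbours $v_{4,j}$ and $v_{3,j-1}$ both colored $c$ (safe by (c)), but $v_{2,j}$ has only the single colored neighbour $v_{1,j}$ and four neighbours in total, so none of (a)--(d) applies and I must certify it sound. The plan is to assign it the two doctors $v_{2,j-1}$ and $v_{3,j}$: both are uncolored neighbours of $v_{2,j}$ inside the block, $v_{2,j-1}$ is safe by the hypothesis of configuration $\gamma$, and $v_{3,j}$ is safe by the argument above. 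The delicate step, and the part I expect to require the most care, is the global doctor-disjointness condition that each safe vertex serves at most one patient. For $v_{3,j}$ this is clear, since its only uncolored in-block neighbour is $v_{2,j}$ itself. For $v_{2,j-1}$ I would invoke the remark already established in the definition of $\gamma$: its safeness forces either $c(v_{2,j-2})=c$ or $c(v_{2,j-2})=c(v_{1,j-1})$, so $v_{2,j-1}$ is not the doctor of any vertex inside its block and is free to serve $v_{2,j}$. This confirms the soundness of $v_{2,j}$ and, together with the easy cases, completes the case analysis and hence the observation.
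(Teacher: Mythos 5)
Your proposal is correct and matches the paper's own (inline) justification: the paper likewise notes that all border vertices in configurations $\alpha$, $\beta$, $\delta$ and $\pi$ are safe, and that in $\gamma$ only $v_{2,j}$ is unsafe, certified sound via the doctors $v_{2,j-1}$ and $v_{3,j}$, with the same use of the safeness of $v_{2,j-1}$ (forcing $c(v_{2,j-2})=c$ or $c(v_{2,j-2})=c(v_{1,j-1})$) to guarantee it serves no other patient. Your write-up is simply a more explicit version of the same case analysis, including the correct observation that $v_{3,j}$'s only uncolored in-block neighbour is $v_{2,j}$ itself.
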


Moreover, notice that, in configurations $\beta$, $\delta$ and $\pi$, there is no doctor in the border. In configuration $\gamma$, there is one doctor 
and its patient in the border.
In configuration $\alpha$, there may be a doctor in the border and its possible patient would be inside the block, but, actually, we have added the constraint that no vertex of a border in configuration $\alpha$ can be a doctor.

\paragraph{Particular configurations.} Let us define seven particular configurations: $\Delta$, $\Delta'$, $\Delta'_2$, $\Lambda$, $\Lambda'$, $\Lambda_2$ and $\Lambda_2'$, five of them depicted in Figure~\ref{fig-delta}.
Configurations $\Lambda_2$ and $\Lambda_2'$ are obtained from Configurations $\Lambda$ and $\Lambda'$, respectively, by replacing exactly one 0 (zero) of column $j-2$ by a color different from $c$, with the additional constraint that column $j-3$ cannot be empty.

All seven particular configurations have their symmetrical counterpart according to the horizontal axis.
However, only configurations $\Delta'$ and $\Delta'_2$ have their symmetrical counterpart according to the vertical axis.
That is, configurations $\Lambda, \Lambda', \Lambda_2$ and $\Lambda'_2$ are only defined for column $j$ being their left border, and, in configuration $\Delta$, column $j+2$ will always be its right border.

\begin{figure}[ht]\centering
\begin{tikzpicture}[scale=0.195]
\tikzstyle{vertex}=[draw,circle,fill=black,inner sep=0pt, minimum size=1pt]

\begin{scope}[xshift=0cm, yshift=0cm, scale=2.5]
\fill[red!20] (1,2) rectangle (2,3);
\foreach \i in {1,2,3}{
\path[draw, thin] (\i,0)--(\i,4);}
\foreach \j in {0,1,2,3,4}{ 
\path[draw, densely dotted] (0,\j)--(4,\j) ;} 
\foreach \j in {0,1,2,3,4}{ 
\path[draw, thin] (1,\j)--(3,\j) ;} 
\path[draw, very thick]  (0,0)--(3,0)--(3,4)--(0,4);
\draw (0.5,4.5) node {$j$};
\draw (0.5,3.5) node {};
\draw (0.5,2.5) node {$c$};
\draw (0.5,1.5) node {};
\draw (0.5,0.5) node {$c$};
\draw (1.5,0.5) node {$0$};
\draw (1.5,1.5) node {$c'$};
\draw (1.5,2.5) node {$0$};
\draw (1.5,3.5) node {$0$};
\draw (2.5,3.5) node {$c$};
\draw (2.5,2.5) node {$0$};
\draw (2.5,1.5) node {$c$};
\draw (2.5,0.5) node {$0$};
\draw (3.5,3.5) node {$0$};
\draw (3.5,2.5) node {$0$};
\draw (3.5,1.5) node {$0$};
\draw (3.5,0.5) node {$0$};
\draw(1.4,-0.7) node {Configuration $\Delta$};
\end{scope}

\begin{scope}[xshift=17cm, yshift=0cm, scale=2.5]
\fill[red!20] (4,2) rectangle (5,3);
\foreach \i in {1,2,3,4,5}{
\path[draw, thin] (\i,0)--(\i,4);}
\foreach \j in {0,1,2,3,4}{
\path[draw, densely dotted] (0,\j)--(6,\j);} 
\foreach \j in {0,1,2,3,4}{ 
\path[draw, thin] (1,\j)--(5,\j);} 
\path[draw, very thick]  (0,0)--(6,0);
\path[draw, very thick]  (0,4)--(6,4);
\draw (2.5,4.5) node {$j$};
\draw (1.5,2.5) node {$c$};
\draw (1.5,1.65) node {$0$};
\draw (1.5,1.25) node {\tiny{safe}};
\draw (2.5,3.5) node {$c$};
\draw (2.5,2.5) node {$0$};
\draw (2.5,1.5) node {$0$};
\draw (2.5,0.5) node {$c$};
\draw (3.5,3.5) node {$0$};
\draw (3.5,2.5) node {$c$};
\draw (3.5,1.5) node {$0$};
\draw (3.5,0.5) node {$0$};
\draw (4.5,3.5) node {$0$};
\draw (4.5,2.5) node {$0$};
\draw (4.5,1.5) node {$0$};
\draw (4.5,0.5) node {$c$};
\draw (5.5,1.5) node {$c$};
\draw (3.0,-0.7) node {Configuration $\Delta'$};
\end{scope}

\begin{scope}[xshift=37cm, yshift=0cm, scale=2.5]
\fill[red!20] (3,1) rectangle (4,2);
\foreach \i in {1,2,3,4}{
\path[draw, thin] (\i,0)--(\i,4);}
\foreach \j in {0,1,2,3,4}{
\path[draw, densely dotted] (0,\j)--(5,\j);} 
\foreach \j in {0,1,2,3,4}{ 
\path[draw, thin] (1,\j)--(4,\j);} 
\path[draw, very thick]  (0,0)--(5,0);
\path[draw, very thick]  (0,4)--(5,4);
\draw (2.5,4.5) node {$j$};
\draw (1.5,2.5) node {$c$};
\draw (1.5,1.65) node {$0$};
\draw (1.5,1.25) node {\tiny{safe}};
\draw (2.5,3.5) node {$c$};
\draw (2.5,2.5) node {$0$};
\draw (2.5,1.5) node {$0$};
\draw (2.5,0.5) node {$c$};
\draw (3.5,3.5) node {$0$};
\draw (3.5,2.5) node {$c$};
\draw (3.5,1.5) node {$0$};
\draw (3.5,0.5) node {$0$};
\draw (4.5,1.5) node {$c'$};
\draw (4.5,0.5) node {$c$};
\draw (2.5,-0.7) node {Configuration $\Delta'_2$};
\end{scope}

\begin{scope}[xshift=5cm, yshift=-20cm, scale=2.5]
\fill[red!20] (2,2) rectangle (3,3);
\foreach \i in {1,2,3,4}{
\path[draw, thin] (\i,0)--(\i,4);}
\foreach \j in {0,1,2,3,4}{ 
\path[draw, densely dotted] (0,\j)--(5,\j);} 
\foreach \j in {0,1,2,3,4}{ 
\path[draw, thin] (1,\j)--(4,\j);}
\path[draw, very thick] (1,0)--(2,0)--(2,4)--(1,4);
\path[draw, very thick] (5,0)--(3,0)--(3,4)--(5,4);
\draw (3.5,4.5) node {$j$};
\draw (4.5,1.5) node {$c''$};
\draw (3.5,0.5) node {$c$};
\draw (3.5,1.5) node {$0$};
\draw (3.5,2.5) node {$c$};
\draw (3.5,3.5) node {$c'$};
\draw (2.5,0.5) node {$0$};
\draw (2.5,1.5) node {$0$};
\draw (2.5,2.5) node {$0$};
\draw (2.5,3.5) node {$0$};
\draw (1.5,0.5) node {$0$};
\draw (1.5,1.5) node {$c$};
\draw (1.5,2.5) node {$0$};
\draw (1.5,3.5) node {$c$};
\draw (2.5,-0.75) node {Configuration $\Lambda$};
\end{scope}

\begin{scope}[xshift=30cm, yshift=-20cm, scale=2.5]
\fill[red!20] (2,2) rectangle (3,3);
\foreach \i in {1,2,3,4,5}{
\path[draw, thin] (\i,0)--(\i,4);}
\foreach \j in {0,1,2,3,4}{ 
\path[draw, densely dotted] (0,\j)--(6,\j);} 
\foreach \j in {0,1,2,3,4}{ 
\path[draw, thin] (1,\j)--(4,\j);}
\path[draw, very thick] (1,0)--(2,0)--(2,4)--(1,4);
\path[draw, very thick] (6,0)--(3,0)--(3,4)--(6,4);
\draw (3.5,4.5) node {$j$};
\draw (4.5,0.5) node {$0$};
\draw (4.5,1.5) node {$0$};
\draw (4.5,2.5) node {$0$};
\draw (4.5,3.5) node {$c$};
\draw (5.6,2.5) node {\tiny{safe}};
\draw (3.5,0.5) node {$c$};
\draw (3.5,1.5) node {$0$};
\draw (3.5,2.5) node {$c$};
\draw (3.5,3.5) node {$c'$};
\draw (2.5,0.5) node {$0$};
\draw (2.5,1.5) node {$0$};
\draw (2.5,2.5) node {$0$};
\draw (2.5,3.5) node {$0$};
\draw (1.5,0.5) node {$0$};
\draw (1.5,1.5) node {$c$};
\draw (1.5,2.5) node {$0$};
\draw (1.5,3.5) node {$c$};
\draw (3.3,-0.75) node {Configuration $\Lambda'$};
\end{scope}

\end{tikzpicture}

\caption{Configurations $\Delta$, $\Delta'$, $\Delta'_2$, $\Lambda$ and $\Lambda'$, where $c'\ne c$ and $c''\ne c'$. The vertex marked ``safe" in configurations $\Delta'$ and  $\Delta'_2$ is not a doctor of some vertex in column $j-2$ (indeed, for it being safe, its neighbor in column $j-2$ must be colored).
Moreover, the vertex marked ``safe" in configuration $\Lambda'$ guarantees that $v_{2,j+1}$ is sound (with doctors $v_{1,j+1}$ and $v_{3,j+1}$).
Configurations $\Lambda, \Lambda', \Lambda_2, \Lambda'_2$ and $\Delta$ have no symmetrical counter-part according to the vertical axis.
Recall that column $j-3$ cannot be empty in configurations $\Lambda_2$ and $\Lambda'_2$.
The red vertices are called the sick vertices.}
\label{fig-delta}
\end{figure}
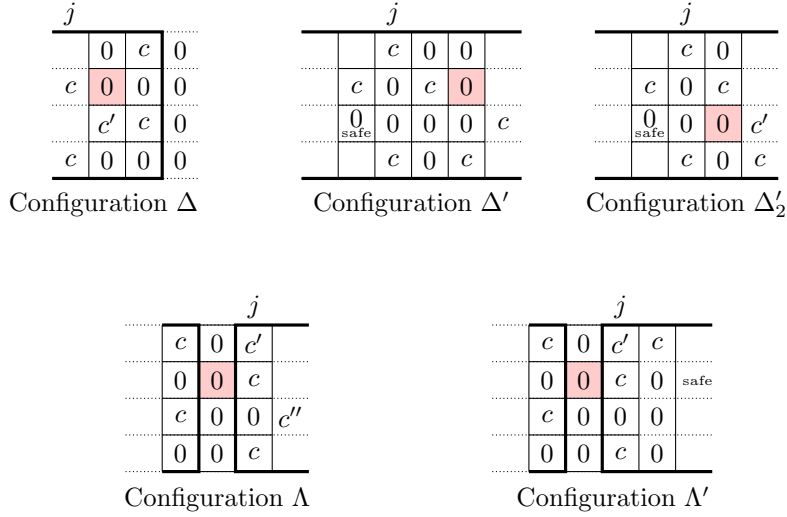

Note that in configuration $\Delta$, the vertex $v_{3,j+1}$ is not safe and is not sound either because $v_{3,j+2}$ (belonging to a border in configuration $\alpha$) cannot be its doctor.
Moreover, in configuration $\Delta'$ of Figure \ref{fig-delta}, there are three uncolored unsafe vertices which are potential patients: $v_{2,j}$, $v_{2,j+1}$, and $v_{3,j+2}$. The doctors $v_{2,j-1}$ and $v_{3,j}$ are associated to the patient $v_{2,j}$ (which is then sound). However, there is no way to associate doctors to the patients $v_{2,j+1}$ and $v_{3,j+2}$, since there are only three possible doctors for both of them (notice that $v_{3,j+3}$ can be colored or belong to a border in configuration $\alpha$).
In what follows, we will associate doctors $v_{1,j+1}$ and $v_{2,j+2}$ to the patient $v_{2,j+1}$ (making it sound) and leave the vertex $v_{3,j+2}$ neither safe nor sound.
Similarly, in configuration $\Delta'_2$, the vertex $v_{2,j}$ is sound (with doctors $v_{3,j}$ and $v_{2,j-1}$), but $v_{2,j+1}$ is neither safe nor sound.
Configurations $\Lambda$, $\Lambda'$, $\Lambda_2$, and $\Lambda'_2$ are defined to deal with an empty column between two blocks that contains exactly one unsafe vertex that we cannot make sound.

We say that a vertex is \emph{sick} if it is
either the vertex $v_{3,j+1}$ of configuration $\Delta$,
or the vertex $v_{3,j+2}$ of configuration $\Delta'$,
or the vertex $v_{2,j+1}$ of configuration $\Delta'_2$,
or the vertex $v_{3,j-1}$ of configurations $\Lambda$, $\Lambda'$, $\Lambda_2$ or $\Lambda_2'$ in Figure \ref{fig-delta} (indicated in red) or their symmetrical equivalents considering horizontal symmetry (for all the seven $\Delta$ and $\Lambda$ configurations) or vertical symmetry (only for configurations $\Delta'$ and $\Delta'_2$). 
Note that a sick vertex has two uncolored neighbors and so there is always an available color to be used for it, even after Bob colors one of its neighbors.
Also note that the sick vertices of configurations $\Lambda$, $\Lambda'$, $\Lambda_2$ and $\Lambda_2'$ are not inside a block.

\paragraph{Intuition of the algorithm.} 
Roughly, to prove our main theorem, we describe a strategy for Alice in $P_4 \x P_n$ such that, after each move of Alice, every border of a block is in configuration $\alpha$, $\beta$, $\gamma$, $\delta$ or $\pi$ (or is column $1$ or $n$). Moreover, every vertex of a block will be safe or sound or sick.
Then, for every possible move of Bob, we specify a valid move for Alice that ensures that the invariants still hold. 

In what follows, we say that two blocks $B$ and $B'$ (w.l.o.g., say $B$ is on the left of $B'$) are \emph{merged} if, before Bob's move, $B$ and $B'$ are separated by one or two empty columns and, after the next move of Bob and then of Alice, at least one vertex of each empty column separating $B$ and $B'$ is colored. In this case, the right border of $B'$ (resp. the left border of $B$) becomes the right (resp. left) border of the resulting new block.


We are now ready to prove the main result.

\begin{theorem}
For any $n \geq 1$, $\chi_g(P_4\x P_n) \leq  4$. Moreover, $\chi_g(P_4\x P_n) = 4$
for any $n \geq 18$.
\end{theorem}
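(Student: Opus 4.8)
The lower bound $\chi_g(P_4 \x P_n) \geq 4$ for $n \geq 18$ is exactly Claim~\ref{claim:lower}, so the whole task reduces to exhibiting a winning strategy for Alice with four colors; this simultaneously yields $\chi_g(P_4 \x P_n) \leq 4$ for every $n$, including the small cases where the matching lower bound fails. The plan is to describe Alice's strategy through a single global invariant on the current partial coloring and to prove, by induction on the number of turns, that Alice can always restore this invariant after any move of Bob. Concretely, the invariant asserts that, immediately after each of Alice's moves, (i) every border of every block is in one of the configurations $\alpha$, $\beta$, $\gamma$, $\delta$, $\pi$ (or is column $1$ or column $n$), and (ii) every vertex lying in a block is safe, sound, or sick, the doctor assignment being a valid matching in the sense that each safe vertex is the doctor of at most one patient.

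The first thing I would establish is that maintaining the invariant already guarantees a win, so that the rest of the argument is purely about preserving it. A safe vertex has, by the definition given above, an available color at every future moment (cases (b)--(d) are preserved by any subsequent coloring), hence it can never be blocked. A sound vertex has two uncolored doctors, so at least two of its neighbors are uncolored and at least two colors remain free; even if Bob colors one doctor the patient still has a free color, and Alice can color it on the spot. A sick vertex likewise keeps two uncolored neighbors, so a single move of Bob cannot exhaust its palette. Consequently, right after every Alice move no uncolored vertex is blocked, one Bob move cannot block any vertex, and Alice then restores the invariant; since the grid is finite, the game must terminate with every vertex colored, i.e.\ Alice wins.

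The inductive step is the heart of the argument, and I would carry it out as a case analysis on Bob's move, assuming the invariant holds beforehand. The natural organization is by the position of Bob's move relative to the block structure: (1) Bob colors a vertex of an empty region, creating a fresh single- or double-column block; (2) Bob colors inside an existing block, or on a border vertex that is not a doctor, possibly supplying the colored neighbor that completes or breaks a configuration; (3) Bob colors a doctor, in which case Alice immediately colors the corresponding patient (still colorable by the argument above) to turn it safe; (4) Bob colors in one of the one or two empty columns separating two blocks, forcing a \emph{merge}. In each case I would specify Alice's reply, typically a single explicitly located vertex, and then re-verify (i) and (ii) locally, checking in particular that the new or updated borders fall into the admissible list $\alpha$--$\pi$ and that the doctor--patient matching can be reestablished.

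The main obstacle is the sheer size and delicacy of this case analysis, and above all the merging cases together with the special configurations $\Delta, \Delta', \Delta'_2, \Lambda, \Lambda', \Lambda_2, \Lambda'_2$. These are precisely the situations in which Alice cannot make every vertex sound, so she must fall back on the weaker \emph{sick} status for exactly one leftover vertex, while simultaneously guaranteeing that each sick vertex always retains two uncolored neighbors and that the doctor assignment stays a matching (no safe vertex is asked to protect two patients at once). Handling a merge so that the two surviving borders of the combined block remain in admissible configurations, and correctly propagating the sick/sound bookkeeping across the newly filled columns, is where most of the work and most of the risk of an overlooked subcase lie. Finally, the base case must be checked directly: Alice's opening move and the behavior in the first and last few columns, where borders need not be in configurations $\alpha$--$\pi$, require separate verification.
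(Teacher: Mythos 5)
Your proposal reproduces the paper's framework exactly: the same lower bound via Claim~\ref{claim:lower}, the same invariant maintained after each of Alice's moves (borders in configurations $\alpha$--$\pi$, every block vertex safe/sound/sick with the doctor assignment a matching), the same monotonicity argument that safety persists and that one Bob move can neither block a patient nor a sick vertex, and the same top-level split of Bob's moves. But as it stands it has two genuine gaps. First, the invariant you state is strictly weaker than the one the induction actually needs. The paper's hypothesis carries three further properties: no border has all four of its vertices colored with only two colors; no vertex of a border in configuration $\alpha$ is a doctor; and a left border in configuration $\alpha$ has exactly two uncolored vertices (except in the $\Lambda$-type configurations). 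These are not bookkeeping conveniences --- identifiable subcases collapse without them. In Case $2\alpha 6$, Alice's reply $c'$ on $v_{3,j+1}$ is only guaranteed to be legal because the two-color property forbids $v_{3,j+2}$ from being colored $c'$; in Cases $1\alpha 3$, $2\alpha' 2$, $2\alpha' 5$ and $3\alpha 6$, the soundness of the leftover vertex requires that a vertex of an $\alpha$-border ($v_{2,j+2}$ or $v_{3,j+2}$) is not already someone's doctor; and Cases $2\alpha' 3$ and $2\alpha' 6$ use the fifth property to know that $v_{1,j\pm 2}$ is uncolored. With only your clauses (i) and (ii), Alice cannot always restore the invariant, so the induction as you set it up would fail.

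Second, and more fundamentally, the proposal never executes the case analysis: ``in each case I would specify Alice's reply'' is precisely where the whole proof lives. The general principles you do prove (safety is monotone under further coloring, a sound vertex survives the loss of one doctor, a sick vertex keeps two uncolored neighbors) occupy only a few lines of the paper; everything else is the exhaustive enumeration --- roughly forty subcases with a mandatory priority ordering among them --- in which the special configurations $\Delta,\Delta',\Delta'_2,\Lambda,\Lambda',\Lambda_2,\Lambda'_2$ are not merely ``handled'' but \emph{discovered}: they are exactly the positions where no doctor assignment exists (e.g.\ $v_{3,j+2}$ in $\Delta'$ has three potential doctors shared with $v_{2,j+1}$), and the closure of the five border configurations plus seven special configurations under both players' moves is the actual theorem being verified. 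Nothing in your outline certifies this closure, nor resolves the delicate interactions you yourself flag, such as a $\Delta$ and a $\Lambda$ sharing a column after Case $2\alpha' 6$, where the paper must argue the two responses never conflict. The same applies to the base case: the paper's treatment of column $1$ (coloring $v_{3,1}$ and viewing column $1$ as a $\beta$-border via a virtual vertex $v_{2,0}$) is flagged in your last sentence but not supplied. In short: correct roadmap, same route as the paper, but the proof itself --- the strengthened invariant and the verified case catalogue --- is missing.
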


\begin{proof}
The lower bound when $n\geq 18$ comes from Claim~\ref{claim:lower}. Now, let us describe a winning strategy for Alice with four colors. 

\begin{description}
\item[First move of Alice.] First, Alice colors $v_{3,1}$ with color $1$. Here, we consider that column $1$ is in configuration $\beta$ assuming a virtual vertex $v_{2,0}$ in column $0$ (which actually does not exist) also colored with $1$ (indeed, vertex $v_{2,1}$ is safe because it has only 3 neighbors).  
\end{description}

Now, by induction on the number of Alice's moves, let us assume that, after Alice's last move (and before Bob's next move), the reached partial proper coloring satisfies the following properties\footnote{Note that Properties $(3)$ and $(4)$ are redundant since they are included in the definition of configuration $\alpha$. We included them here to emphasize them.}:  

\begin{enumerate}
\item Every vertex of a block is safe or sound or sick.
\item Every border of a block is in configuration $\alpha$, $\beta$, $\gamma$, $\delta$ or $\pi$.
\item No border has its $4$ vertices colored with only two colors.
\item No vertex of a border in configuration $\alpha$ is a doctor.
\item A left border in configuration $\alpha$ has two uncolored vertices, except column $j$ of configurations $\Lambda$, $\Lambda'$, $\Lambda_2$ and $\Lambda'_2$ (following the orientation of Figure \ref{fig-delta}).
\end{enumerate}



The induction hypothesis clearly holds after the first move of Alice. So, let us assume it is Bob's turn.
Note that Bob can color any uncolored vertex using one of the four colors (it clearly holds for vertices not belonging to any block since they are safe or have at least two uncolored neighbors, and it holds for vertices in a block by Property $(1)$).
Hence, if we prove that whatever be the next move of Bob, Alice can color a vertex in such a way that the induction hypothesis still holds, then this will describe a winning strategy for Alice ({\it i.e.} all vertices will eventually be colored with at most $4$ colors). 
There are many cases depending on which is the next vertex colored by Bob. 
The cases must be considered in the order they are described below.
That is to say, when the last move of Bob fits into several cases, the one described first has priority. 

In the strategy for Alice as described below, it should be clear that, after each of Alice's moves, the properties of the induction hypothesis still hold.
The main difficulty is to check that all cases are considered.
We organized Bob's possible moves in such a way that it should be easy to check that no case has been forgotten. 

Before presenting all cases, we show in Figure \ref{fig:example} an example in $P_4\x P_n$ for $n\geq 10$.
After Alice colors $v_{3,1}$ with $1$ in her first move, suppose that Bob colors $v_{2,3}$, $v_{3,5}$, $v_{2,7}$ and $v_{3,9}$ with color $1$ in his following four moves. We will see in case {\bf $3$-new} that Alice's responses are the ones in Figure \ref{fig:example}(a), that is, $v_{4,3}$, $v_{1,5}$, $v_{4,7}$ and $v_{1,9}$, where Alice's (resp. Bob's) moves are represented in red (resp. blue). Notice that, at this point, there are five blocks, each one with exactly one column.
Column $1$ is in configuration $\beta$ (as explained before in the first move of Alice) and columns $3$, $5$, $7$, and $9$ are in configuration $\alpha$, considered as left borders.
From this, suppose that Bob colors $v_{1,7}$ with $2$. Then according to the strategy as explained in case $2\alpha'6$, Alice responds in $v_{3,8}$ with color $3$ or $4$ (say $3$), obtaining a configuration $\Lambda$ in columns $5$ to $7$ and a configuration $\Delta$ in columns $7$ to $9$ (see Figure \ref{fig:example}(b)).
Next, suppose that Bob colors $v_{4,5}$ with $2$. Then, following case $2\alpha'6$ again, Alice responds in $v_{2,6}$ with color $3$ or $4$ (say $3$), killing the configuration $\Lambda$ of columns $5$ to $7$, maintaining the configuration $\Delta$ in columns $7$ to $9$ and obtaining a new configuration $\Lambda$ in columns $3$ to $5$ (see Figure \ref{fig:example}(c)).
Finally, suppose that Bob colors $v_{1,3}$ with $2$. Then, according to the strategy as explained in case $2\alpha'1$, Alice responds in $v_{1,2}$ with color $1$, replacing the configuration $\Lambda$ of columns $3$ to $5$ with a configuration $\Lambda_2$ (see Figure \ref{fig:example}(d)).

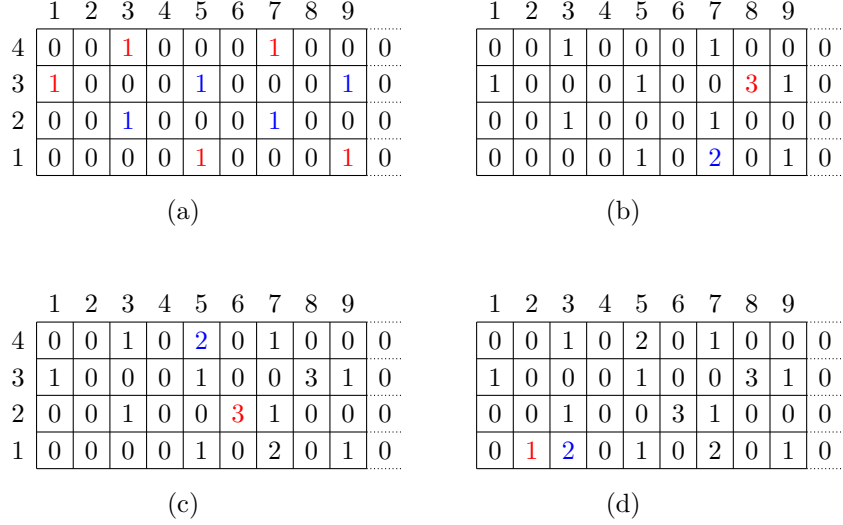
\begin{figure}[ht!]\centering
\scalebox{1.0}{
\begin{tikzpicture}[scale=0.195]
\tikzstyle{vertex}=[draw,circle,fill=black,inner sep=0pt, minimum size=1pt]

\begin{scope}[xshift=0cm, yshift=0cm, scale=2.5]
\foreach \i in {0,1,2,3,4,5,6,7,8,9}{
\path[draw, thin] (\i,0)--(\i,4);}
\foreach \j in {0,1,2,3,4}{ 
\path[draw, densely dotted] (9,\j)--(10,\j);} 
\foreach \j in {0,1,2,3,4}{ 
\path[draw, thin] (0,\j)--(9,\j);} 
\draw (-.5,0.5) node {$1$};
\draw (-.5,1.5) node {$2$};
\draw (-.5,2.5) node {$3$};
\draw (-.5,3.5) node {$4$};
\draw (0.5,0.5) node {$0$};
\draw (0.5,1.5) node {$0$};
\draw (0.5,2.5) node {$\red{1}$};
\draw (0.5,3.5) node {$0$};
\draw (1.5,0.5) node {$0$};
\draw (1.5,1.5) node {$0$};
\draw (1.5,2.5) node {$0$};
\draw (1.5,3.5) node {$0$};
\draw (2.5,0.5) node {$0$};
\draw (2.5,1.5) node {$\blue{1}$};
\draw (2.5,2.5) node {$0$};
\draw (2.5,3.5) node {$\red{1}$};
\draw (3.5,0.5) node {$0$};
\draw (3.5,1.5) node {$0$};
\draw (3.5,2.5) node {$0$};
\draw (3.5,3.5) node {$0$};
\draw (4.5,0.5) node {$\red{1}$};
\draw (4.5,1.5) node {$0$};
\draw (4.5,2.5) node {\blue{1}};
\draw (4.5,3.5) node {$0$};
\draw (5.5,0.5) node {$0$};
\draw (5.5,1.5) node {$0$};
\draw (5.5,2.5) node {$0$};
\draw (5.5,3.5) node {$0$};
\draw (6.5,0.5) node {$0$};
\draw (6.5,1.5) node {$\blue{1}$};
\draw (6.5,2.5) node {$0$};
\draw (6.5,3.5) node {$\red{1}$};
\draw (7.5,0.5) node {$0$};
\draw (7.5,1.5) node {$0$};
\draw (7.5,2.5) node {$0$};
\draw (7.5,3.5) node {$0$};
\draw (8.5,0.5) node {$\red{1}$};
\draw (8.5,1.5) node {$0$};
\draw (8.5,2.5) node {$\blue{1}$};
\draw (8.5,3.5) node {$0$};
\draw (9.5,0.5) node {$0$};
\draw (9.5,1.5) node {$0$};
\draw (9.5,2.5) node {$0$};
\draw (9.5,3.5) node {$0$};
\draw (4,-1) node {(a)};
\foreach \i in {1,...,9}{
\draw (\i-.5,4.5) node {\i};}
\end{scope}

\begin{scope}[xshift=30cm, yshift=0cm, scale=2.5]
\foreach \i in {0,1,2,3,4,5,6,7,8,9}{
\path[draw, thin] (\i,0)--(\i,4);}
\foreach \j in {0,1,2,3,4}{ 
\path[draw, densely dotted] (9,\j)--(10,\j);} 
\foreach \j in {0,1,2,3,4}{ 
\path[draw, thin] (0,\j)--(9,\j);} 
\draw (0.5,0.5) node {$0$};
\draw (0.5,1.5) node {$0$};
\draw (0.5,2.5) node {$1$};
\draw (0.5,3.5) node {$0$};
\draw (1.5,0.5) node {$0$};
\draw (1.5,1.5) node {$0$};
\draw (1.5,2.5) node {$0$};
\draw (1.5,3.5) node {$0$};
\draw (2.5,0.5) node {$0$};
\draw (2.5,1.5) node {$1$};
\draw (2.5,2.5) node {$0$};
\draw (2.5,3.5) node {$1$};
\draw (3.5,0.5) node {$0$};
\draw (3.5,1.5) node {$0$};
\draw (3.5,2.5) node {$0$};
\draw (3.5,3.5) node {$0$};
\draw (4.5,0.5) node {$1$};
\draw (4.5,1.5) node {$0$};
\draw (4.5,2.5) node {$1$};
\draw (4.5,3.5) node {$0$};
\draw (5.5,0.5) node {$0$};
\draw (5.5,1.5) node {$0$};
\draw (5.5,2.5) node {$0$};
\draw (5.5,3.5) node {$0$};
\draw (6.5,0.5) node {$\blue{2}$};
\draw (6.5,1.5) node {$1$};
\draw (6.5,2.5) node {$0$};
\draw (6.5,3.5) node {$1$};
\draw (7.5,0.5) node {$0$};
\draw (7.5,1.5) node {$0$};
\draw (7.5,2.5) node {$\red{3}$};
\draw (7.5,3.5) node {$0$};
\draw (8.5,0.5) node {$1$};
\draw (8.5,1.5) node {$0$};
\draw (8.5,2.5) node {$1$};
\draw (8.5,3.5) node {$0$};
\draw (9.5,0.5) node {$0$};
\draw (9.5,1.5) node {$0$};
\draw (9.5,2.5) node {$0$};
\draw (9.5,3.5) node {$0$};
\draw (4,-1) node {(b)};

\foreach \i in {1,...,9}{
\draw (\i-0.5,4.5) node {\i};}
\end{scope}

\begin{scope}[xshift=0cm, yshift=-20cm, scale=2.5]
\foreach \i in {0,1,2,3,4,5,6,7,8,9}{
\path[draw, thin] (\i,0)--(\i,4);}
\foreach \j in {0,1,2,3,4}{ 
\path[draw, densely dotted] (9,\j)--(10,\j);} 
\foreach \j in {0,1,2,3,4}{ 
\path[draw, thin] (0,\j)--(9,\j);} 
\draw (-.5,0.5) node {$1$};
\draw (-.5,1.5) node {$2$};
\draw (-.5,2.5) node {$3$};
\draw (-.5,3.5) node {$4$};
\draw (0.5,0.5) node {$0$};
\draw (0.5,1.5) node {$0$};
\draw (0.5,2.5) node {$1$};
\draw (0.5,3.5) node {$0$};
\draw (1.5,0.5) node {$0$};
\draw (1.5,1.5) node {$0$};
\draw (1.5,2.5) node {$0$};
\draw (1.5,3.5) node {$0$};
\draw (2.5,0.5) node {$0$};
\draw (2.5,1.5) node {$1$};
\draw (2.5,2.5) node {$0$};
\draw (2.5,3.5) node {$1$};
\draw (3.5,0.5) node {$0$};
\draw (3.5,1.5) node {$0$};
\draw (3.5,2.5) node {$0$};
\draw (3.5,3.5) node {$0$};
\draw (4.5,0.5) node {$1$};
\draw (4.5,1.5) node {$0$};
\draw (4.5,2.5) node {$1$};
\draw (4.5,3.5) node {$\blue{2}$};
\draw (5.5,0.5) node {$0$};
\draw (5.5,1.5) node {$\red{3}$};
\draw (5.5,2.5) node {$0$};
\draw (5.5,3.5) node {$0$};
\draw (6.5,0.5) node {$2$};
\draw (6.5,1.5) node {$1$};
\draw (6.5,2.5) node {$0$};
\draw (6.5,3.5) node {$1$};
\draw (7.5,0.5) node {$0$};
\draw (7.5,1.5) node {$0$};
\draw (7.5,2.5) node {$3$};
\draw (7.5,3.5) node {$0$};
\draw (8.5,0.5) node {$1$};
\draw (8.5,1.5) node {$0$};
\draw (8.5,2.5) node {$1$};
\draw (8.5,3.5) node {$0$};
\draw (9.5,0.5) node {$0$};
\draw (9.5,1.5) node {$0$};
\draw (9.5,2.5) node {$0$};
\draw (9.5,3.5) node {$0$};
\draw (4,-1) node {(c)};

\foreach \i in {1,...,9}{
\draw (\i-0.5,4.5) node {\i};}
\end{scope}

\begin{scope}[xshift=30cm, yshift=-20cm, scale=2.5]
\foreach \i in {0,1,2,3,4,5,6,7,8,9}{
\path[draw, thin] (\i,0)--(\i,4);}
\foreach \j in {0,1,2,3,4}{ 
\path[draw, densely dotted] (9,\j)--(10,\j);} 
\foreach \j in {0,1,2,3,4}{ 
\path[draw, thin] (0,\j)--(9,\j);} 
\draw (0.5,0.5) node {$0$};
\draw (0.5,1.5) node {$0$};
\draw (0.5,2.5) node {$1$};
\draw (0.5,3.5) node {$0$};
\draw (1.5,0.5) node {$\red{1}$};
\draw (1.5,1.5) node {$0$};
\draw (1.5,2.5) node {$0$};
\draw (1.5,3.5) node {$0$};
\draw (2.5,0.5) node {$\blue{2}$};
\draw (2.5,1.5) node {$1$};
\draw (2.5,2.5) node {$0$};
\draw (2.5,3.5) node {$1$};
\draw (3.5,0.5) node {$0$};
\draw (3.5,1.5) node {$0$};
\draw (3.5,2.5) node {$0$};
\draw (3.5,3.5) node {$0$};
\draw (4.5,0.5) node {$1$};
\draw (4.5,1.5) node {$0$};
\draw (4.5,2.5) node {$1$};
\draw (4.5,3.5) node {$2$};
\draw (5.5,0.5) node {$0$};
\draw (5.5,1.5) node {$3$};
\draw (5.5,2.5) node {$0$};
\draw (5.5,3.5) node {$0$};
\draw (6.5,0.5) node {$2$};
\draw (6.5,1.5) node {$1$};
\draw (6.5,2.5) node {$0$};
\draw (6.5,3.5) node {$1$};
\draw (7.5,0.5) node {$0$};
\draw (7.5,1.5) node {$0$};
\draw (7.5,2.5) node {$3$};
\draw (7.5,3.5) node {$0$};
\draw (8.5,0.5) node {$1$};
\draw (8.5,1.5) node {$0$};
\draw (8.5,2.5) node {$1$};
\draw (8.5,3.5) node {$0$};
\draw (9.5,0.5) node {$0$};
\draw (9.5,1.5) node {$0$};
\draw (9.5,2.5) node {$0$};
\draw (9.5,3.5) node {$0$};
\draw (4,-1) node {(d)};

\foreach \i in {1,...,9}{
\draw (\i-0.5,4.5) node {\i};}
\end{scope}

\end{tikzpicture}}
\caption{Example of a sequence of moves in the game from (a) to (d), where Alice's (resp. Bob's) moves are represented in red (resp. blue).\label{fig:example}}
\end{figure}

In the following, we present all possible cases.

\begin{description}
\item[Case 1.] {\bf When Bob colors a vertex:
\begin{itemize}
\item[(a)] inside a block, or
\item[(b)] in the border of configuration $\Delta$, or 
\item[(c)] in column $j$ or $j-1$ of configurations $\{\Lambda,\Lambda_2,\Lambda',\Lambda'_2\}$, or 
\item[(d)] in column $j-2$ of configurations $\{\Lambda,\Lambda'\}$, or
\item[(e)] in column $j-2$ of configurations $\{\Lambda_2,\Lambda'_2\}$ if column $j-3$ is not empty.
\end{itemize}}

In other words, Bob does not color a vertex of a border -- except in configurations $\Delta$, $\Lambda$ and $\Lambda'$ -- nor a vertex in an empty column -- except possibly in column $n$ if it is empty and column $n-1$ is not\footnote{This corresponds to the exception in the definition of blocks.}. 


\begin{description}
\item[Case 1$\Delta$.] When Bob colors  a vertex of column $j+1$ or $j+2$ of a configuration $\Delta$ in Figure~\ref{fig-delta} (recall that configuration $\Delta$ is only defined for column $j+2$ being the right border). If Bob has colored the sick vertex, then Alice colors any uncolored vertex of column $j+1$. Otherwise, Alice colors the sick vertex with any available color making safe all vertices of column $j+1$. Note that, if Bob has colored a vertex in column $j+2$, this remains a border in configuration $\alpha$. In all cases, the sick vertex is colored and so this configuration $\Delta$ disappears.

\item[Case 1$\Delta'$.] When Bob colors a vertex of Configuration $\Delta'$ or  $\Delta'_2$. The notations are the same as in Figure~\ref{fig-delta}, the symmetric case (according to the vertical or horizontal symmetries) can be dealt with similarly.

First, let us consider the configuration $\Delta'$.
If Bob colors $v_{2,j-1}$ or $v_{3,j}$, Alice colors $v_{2,j+1}$ with the same color, making $v_{2,j}$ and $v_{2,j+1}$ safe, and $v_{3,j+2}$ sound (with doctors $v_{2,j+2}$ and $v_{4,j+2}$).
If Bob colors $v_{2,j}$, Alice colors $v_{1,j+1}$ with the same color, making $v_{2,j+1}$ safe and $v_{3,j+2}$ sound.
From now on, $v_{2,j}$ is always sound (with doctors $v_{2,j-1}$ and $v_{3,j}$).
If Bob colors $v_{2,j+1}$ (resp. $v_{1,j+1}$), Alice colors $v_{1,j+1}$ (resp. $v_{2,j+1}$) with any available color, making $v_{3,j+2}$ sound.
If Bob colors $v_{3,j+2}$, Alice colors $v_{2,j+1}$ making it safe.
If Bob colors $v_{4,j+2}$ (resp., $v_{4,j+1}$), Alice colors $v_{3,j+2}$ with any color, making $v_{3,j+2}$ safe and $v_{2,j+1}$ sound (with doctors $v_{1,j+1}$ and $v_{2,j+2}$).
Finally, if Bob colors $v_{2,j+2}$ (with $c'\ne c$), Alice colors $v_{3,j+2}$ making it safe and obtaining the configuration $\Delta'_2$.

Now let us assume Bob colors a vertex in a configuration $\Delta'_2$.
If after Bob's move, Alice can color $v_{2,j}$ with $c'$, she does it, making all vertices of columns $j$ and $j+1$ safe. Otherwise, Alice colors $v_{1,j+1}$ with $c'$. There are two cases: either Bob's previous move colored $v_{2,j}$, then all vertices in columns $j$ and $j+1$ are safe, or Bob's previous move colored $z \in \{v_{2,j-1},v_{3,j}\}$ with color $c'$, in which case all vertices are safe but $v_{2,j}$ which is sound with doctors $v_{2,j+1}$ and $z'\in\{v_{2,j-1},v_{3,j}\}\setminus\{z\}$.

\item[Case 1$\Lambda$.] When Bob colors a vertex of column $j-2$, $j-1$ or $j$ of configuration $\Lambda$ or $\Lambda_2$.
We follow the orientation of Figure \ref{fig-delta}.
If Bob colors $v_{4,j-1}$, Alice colors $v_{2,j-1}$ with the same color.
If Bob colors $v_{2,j-1}$ (resp. $v_{3,j-1}$), Alice colors $v_{3,j-1}$ (resp. $v_{2,j-1}$) with any available color.
If Bob colors $v_{2,j}$, Alice colors $v_{3,j-1}$ with the same color if available, otherwise we are in Configuration $\Lambda_2$ (with $v_{3,j-2}$ colored and $v_{1,j-2}$ uncolored) and then Alice plays the same color as Bob on $v_{1,j-1}$, making $v_{3,j-1}$ sound with doctors $v_{2,j-1}$ and $v_{4,j-1}$.

If Bob colors $v_{1,j-1}$ with color $x$, Alice colors $v_{3,j-1}$ with $x$ if possible (in which case, all vertices of column $j-1$ are safe), otherwise we are in Configuration $\Lambda_2$ with $v_{3,j-2}$ colored $x$. If $x\ne c''$, Alice colors $v_{2,j}$ with $x$ (making $v_{3,j-1}$ sound with doctors $v_{4,j-1}$ and $v_{2,j-1}$). Otherwise, she colors $v_{4,j-1}$ with $c''$, making $v_{3,j-1}$ safe and $v_{2,j-1}$ sound with doctors $v_{3,j-1}$ and $v_{2,j}$.

Finally, assume that Bob colors $v_{1,j-2}$ or $v_{3,j-2}$.
First suppose that column $j-3$ is empty. Then we were in configuration $\Lambda$ before this move of Bob and column $j-2$ is a left border.
Then Alice's answer will be treated later in case $2\alpha\beta\gamma F$ (if the border is free to the left) or case $2\alpha'$ (otherwise).
Just getting ahead, in cases $2\alpha\beta\gamma F$, $2\alpha' 1$ and $2\alpha' 2$, Alice answers in column $j-3$, obtaining a configuration $\Lambda_2$. In cases $2\alpha'3$, $2\alpha'4$ and $2\alpha'5$, Alice merges two blocks, making the sick vertex safe or sound, and creating a new configuration $\Lambda$ or $\Lambda'$. Case $2\alpha'6$ cannot occur since $c(v_{1,j+2})\neq c$.
So, suppose that column $j-3$ is not empty.
Note that $v_{3,j-2}$ cannot be a doctor since it lies on a border in configuration $\alpha$.
Then Alice colors the sick vertex $v_{3,j-1}$, removing the configuration $\Lambda$ or $\Lambda_2$.

\item[Case 1$\Lambda'$.] When Bob colors a vertex of a column in $\{j-2,\ldots,j+1\}$ of configuration $\Lambda'$ or $\Lambda'_2$.
Notice that $v_{3,j+1}$ is safe and $v_{2,j+1}$ is sound (with doctors $v_{1,j+1}$ and $v_{3,j+1}$).
If Bob colors $v_{1,j+1}$ or $v_{3,j+1}$ of configuration $\Lambda'$ or $\Lambda'_2$, Alice colors $v_{2,j+1}$ with $c''\ne c'$, obtaining the configuration $\Lambda$ or $\Lambda_2$, respectively.
If Bob colors $v_{2,j+1}$ of configuration $\Lambda'$ or $\Lambda'_2$, Alice colors the sick vertex $v_{3,j-1}$ (merging the two blocks and leaving $v_{2,j-1}$ sound with doctors $v_{1,j-1}$ and $v_{2,j}$).
Finally assume that Bob colors $v_{1,j-2}$ or $v_{3,j-2}$.
First suppose that column $j-3$ is empty. Then we were in configuration $\Lambda'$ before this Bob's move and column $j-2$ is a left border.
Exactly as in case $1\Lambda$, Alice's answer will be treated in case $2\alpha\beta\gamma F$ or $2\alpha'$, obtaining a configuration $\Lambda_2'$ (instead of $\Lambda_2$) or a new configuration $\Lambda$ or $\Lambda'$, and making the sick vertex safe or sound.
So, assume that column $j-3$ is not empty.
As in case $1\Lambda$, $v_{3,j-2}$ cannot be a doctor since it lies on a border in configuration $\alpha$.
Then Alice colors the sick vertex $v_{3,j-1}$, removing the configuration $\Lambda'$ or $\Lambda'_2$.
All other possibilities are considered similarly as in Case $1\Lambda$.

\item[Case 1-doc.] If Bob colors a doctor of a sound vertex $v$ and $v$ is not in a border in configuration $\gamma$, then Alice colors $v$ with any available color, making it safe.
\item[Case $1\gamma$-doc.] If Bob colors the doctor (not in the border) of the sound vertex $v$ in a border (column $j$) in configuration $\gamma$, then Alice colors the second doctor $w$ of $v$ ($w$ is the uncolored neighbor of $v$ in the border in configuration $\gamma$)  with the same color, making $v$ safe and obtaining a border  (column $j$) in configuration $\beta$ (see Figure~\ref{fig-gamma-penultimate}).

\begin{figure}[ht!]\centering
\begin{tikzpicture}[scale=0.195]
\tikzstyle{vertex}=[draw,circle,fill=black,inner sep=0pt, minimum size=1pt]

\begin{scope}[xshift=0cm, yshift=0cm, scale=2.5]
\foreach \i in {1,2,3,4}{
\path[draw, thin] (\i,0)--(\i,4);}
\foreach \j in {0,1,2,3,4}{ 
\path[draw, densely dotted](0,\j)--(5,\j);} 
\foreach \j in {0,1,2,3,4}{ 
\path[draw, thin] (1,\j)--(2,\j)--(3,\j)--(4,\j);} 
\path[draw, very thick]  (0,0) --(1,0)--(2,0)--(3,0) --(3,1)--(3,2)--(3,3)--(3,4)--(2,4)--(1,4)-- (0,4)  ;
\draw (2.5,4.5) node {$j$};
\draw (0.5,1.5) node {};
\draw (1.5,1.5) node {$\blue{c'}$};
\draw (1.5,2.5) node {$c$};
\draw (2.5,0.5) node {$c$};
\draw (2.5,1.5) node {$0$};
\draw (2.5,2.5) node {$\red{c'}$};
\draw (2.5,3.5) node {$c$};
\draw (3.5,0.5) node {$0$};
\draw (3.5,1.5) node {$0$};
\draw (3.5,2.5) node {$0$};
\draw (3.5,3.5) node {$0$};
\draw (2.5,-.75) node {Case $1\gamma$-doc: $\gamma\to\beta$}; 
\end{scope}

\end{tikzpicture}
\caption{Case $1\gamma$-doc: Bob just colored the vertex in blue of the penultimate column of a border in configuration $\gamma$. Alice's answer is in red. The bold line figures the surrounding of the updated block (before Alice's move).}
\label{fig-gamma-penultimate}
\end{figure}

\item[Case 1-safe.] If Bob colors a safe or sound vertex, then:

\begin{itemize}
\item if there is a sick vertex, Alice colors it, removing the configuration $\Delta$, $\Delta'$, $\Delta'_2$, $\Lambda$, $\Lambda_2$, $\Lambda'$ or $\Lambda'_2$ it belongs to;
\item if there is a sound vertex, Alice colors it, making it safe;
\item if there is an uncolored safe vertex not in a border, Alice colors it (Note that it cannot be a doctor since, in that case, there are no sound vertices anymore);
\end{itemize}

Otherwise, every vertex inside a block is already colored and no configuration $\Lambda,\Lambda',\Lambda_2$ or $\Lambda'_2$ exists.

\begin{itemize}
\item {\bf Case $1\delta$.}  When there is a border in configuration $\delta$ (notation of Figure~\ref{fig-1safes}, symmetrical configurations must be dealt with accordingly). If $v_{3,j+2}$ is not colored $c'$ (Case $1\delta 1$), Alice colors $v_{3,j+1}$ with $c'$ obtaining a border in configuration $\beta$ or merging two blocks. Otherwise (Case $1\delta 2$), she colors $v_{4,j+1}$ with $c'$, merging the two blocks and making $v_{3,j+1}$ safe and $v_{2,j+1}$ sound (with doctors $v_{3,j+1}$ and $v_{1,j+1}$).

\begin{figure}[ht]\centering
\begin{tikzpicture}[scale=0.195]
\tikzstyle{vertex}=[draw,circle,fill=black,inner sep=0pt, minimum size=1pt]

\begin{scope}[xshift=0cm, yshift=0cm, scale=2.5]
\foreach \i in {2,3,4}{
\path[draw, thin] (\i,0)--(\i,4) ;}
\foreach \j in {0,1,2,3,4}{
\path[draw, densely dotted] (1,\j)--(5,\j);} 
\foreach \j in {0,1,2,3,4}{ 
\path[draw, thin] (2,\j)--(3,\j);} 
\path[draw, very thick]  (1,0)--(3,0)--(3,4)--(1,4) ;
\draw (2.5,4.5) node {$j$};
\draw (1.5,2.5) node {$c$};
\draw (2.5,3.5) node {$c$};
\draw (2.5,2.5) node {$0$};
\draw (2.5,1.5) node {$c'$};
\draw (3.5,0.5) node {$0$};
\draw (3.5,1.5) node {$0$};
\draw (3.5,2.5) node {$\red{c'}$};
\draw (3.5,3.5) node {$0$};
\draw (4.7,2.5) node {$\ne c'$};
\draw (2.5,-0.75) node {Case $1\delta 1$: $\delta\to\beta$/merge};
\end{scope}

\begin{scope}[xshift=25cm, yshift=0cm, scale=2.5]
\foreach \i in {2,3,4}{
\path[draw, thin] (\i,0)--(\i,4) ;}
\foreach \j in {0,1,2,3,4}{
\path[draw, densely dotted] (1,\j)--(5,\j);} 
\foreach \j in {0,1,2,3,4}{ 
\path[draw, thin] (2,\j)--(3,\j);} 
\path[draw, very thick]  (1,0)--(3,0)--(3,4)--(1,4) ;
\path[draw, very thick]  (5,0)--(4,0)--(4,4)--(5,4) ;
\draw (2.5,4.5) node {$j$};
\draw (1.5,2.5) node {$c$};
\draw (2.5,3.5) node {$c$};
\draw (2.5,2.5) node {$0$};
\draw (2.5,1.5) node {$c'$};
\draw (3.5,0.5) node {$0$};
\draw (3.5,1.5) node {$0$};
\draw (3.5,2.5) node {$0$};
\draw (3.5,3.5) node {$\red{c'}$};
\draw (4.5,2.5) node {$c'$};
\draw (2.5,-0.75) node {Case $1\delta 2$: $\delta\to$ merge};
\end{scope}
\end{tikzpicture}

\caption{Case $1\delta$: Bob has played a safe or sound vertex and Alice plays in a border in configuration $\delta$, where $c,c'$ are distinct colors. In case $1\delta 1$, $c(v_{3,j+2})\neq c'$.  The notation ``$\delta\to\beta$/merge" (in Case $1\delta 1$) means that the former block of column $j$ (before Alice's move) is extended by one column to the right and its new right border is in configuration $\beta$, or it is merged with the former block of column $j+2$.}
\label{fig-1safes}
\end{figure}

\item {\bf Case $1\pi$.} If there is a border in configuration $\pi$ (notation of Figure~\ref{fig:configs}), Alice colors $v_{1,j+1}$ with color $c'$, making $v_{2,j+1}$ safe and $v_{3,j+1}$ sound, and merging the two blocks.

\item {\bf Case $1\gamma$.} If there is a border in configuration $\gamma$, Alice plays in the border (vertex $v_{2,j}$ in Case $1\gamma$ of Figure \ref{fig-1safeFreeAB}) obtaining a border in configuration $\delta$. 

\item {\bf Case $1\alpha\beta F$.} If there is a free border in configuration $\alpha$ or $\beta$, Alice plays as described in Figure~\ref{fig-1safeFreeAB}. 

\begin{figure}[ht!]\centering
\begin{tikzpicture}[scale=0.195]
\tikzstyle{vertex}=[draw,circle,fill=black,inner sep=0pt, minimum size=1pt]

\begin{scope}[xshift=0cm, yshift=0cm, scale=2.5]
\foreach \i in {3,4}{
\path[draw, thin] (\i,0)--(\i,4);}
\foreach \j in {0,1,2,3,4}{ 
\path[draw, densely dotted] (2,\j)--(5,\j);} 
\foreach \j in {0,1,2,3,4}{ 
\path[draw, thin] (3,\j)--(4,\j);} 
\path[draw, very thick]  (2,0)--(4,0)--(4,4)--(2,4) ;
\draw (3.5,4.5) node {$j$};
\draw (2.5,2.5) node {$c$};
\draw (2.5,1.65) node {$0$};
\draw (2.5,1.25) node {\tiny{safe}};
\draw (3.5,0.5) node {$c$};
\draw (3.5,1.5) node {\red{$c'$}};
\draw (3.5,2.5) node {$0$};
\draw (3.5,3.5) node {$c$};
\draw (4.5,0.5) node {$0$};
\draw (4.5,1.5) node {$0$};
\draw (4.5,2.5) node {$0$};
\draw (4.5,3.5) node {$0$};
\draw (3.5,-.75) node {Case $1\gamma$: $\gamma\to \delta$}; 
\end{scope}

\begin{scope}[xshift=22cm, yshift=0cm, scale=2.5]
\foreach \i in {1,2,3,4}{
\path[draw, thin] (\i,0)--(\i,4);}
\foreach \j in {0,1,2,3,4}{ 
\path[draw, densely dotted](0,\j)--(5,\j);} 
\foreach \j in {0,1,2,3,4}{ 
\path[draw, thin] (1,\j)--(2,\j)--(3,\j)--(4,\j);} 
\path[draw, very thick]  (0,0) --(1,0)--(2,0)--(3,0) --(3,1)--(3,2)--(3,3)--(3,4)--(2,4)--(1,4)-- (0,4) ;
\draw (2.5,4.5) node {$j$};
\draw (2.5,0.5) node {$c$};
\draw (2.5,2.5) node {$c$};
\draw (3.5,0.5) node {$0$};
\draw (3.5,1.5) node {\red{$c$}};
\draw (3.5,2.5) node {$0$};
\draw (3.5,3.5) node {$0$};
\draw (4.5,0.5) node {$0$};
\draw (4.5,1.5) node {$0$};
\draw (4.5,2.5) node {$0$};
\draw (4.5,3.5) node {$0$};
\draw (2.5,-0.75) node {Case $1\alpha\beta F1$: $\alpha\to\beta$}; 
\end{scope}

\begin{scope}[xshift=45cm, yshift=0cm, scale=2.5]
\foreach \i in {1,2,3,4}{
\path[draw, thin] (\i,0)--(\i,4);}
\foreach \j in {0,1,2,3,4}{ 
\path[draw, densely dotted](0,\j)--(5,\j);} 
\foreach \j in {0,1,2,3,4}{ 
\path[draw, thin] (1,\j)--(2,\j)--(3,\j)--(4,\j);} 
\path[draw, very thick]  (0,0) --(1,0)--(2,0)--(3,0) --(3,1)--(3,2)--(3,3)--(3,4)--(2,4)--(1,4)-- (0,4)  ;
\draw (2.5,4.5) node {$j$};
\draw (1.5,1.5) node {$c$};
\draw (2.5,2.5) node {$c$};
\draw (3.5,0.5) node {$0$};
\draw (3.5,1.5) node {\red{$c$}};
\draw (3.5,2.5) node {$0$};
\draw (3.5,3.5) node {$0$};
\draw (3.5,3.5) node {$0$};
\draw (4.5,0.5) node {$0$};
\draw (4.5,1.5) node {$0$};
\draw (4.5,2.5) node {$0$};
\draw (4.5,3.5) node {$0$};
\draw (2.5,-.75) node {Case $1\alpha\beta F2$: $\beta\to\beta$}; 
\end{scope}

\end{tikzpicture}
\caption{Cases $1\alpha\beta F$ and $1\gamma$: Bob has played a safe or sound vertex and Alice plays in a free border in configuration $\alpha$ or $\beta$, or in a border in configuration $\gamma$.}
\label{fig-1safeFreeAB}
\end{figure}

\end{itemize}

Therefore, we can assume that every vertex inside a block is already colored and that there are no free borders nor borders in configuration $\gamma, \delta$ or $\pi$. Unless all vertices are already colored, there must be an empty column between two borders: the different possibilities are described in Figure~\ref{fig:Case8}. Recall that, by Property $(5)$, a left border in configuration $\alpha$ must have exactly two uncolored vertices (this is important for cases $1\alpha$ below).

\begin{itemize}
\item {\bf Case $1\beta$:} 
There exists a border in configuration $\beta$.
Alice colors $v_{3,j+1}$ with any available color $c'\ne c$, making it safe and $v_{2,j+1}$ sound (with doctors $v_{2,j}$ and $v_{1,j+1}$), and merging the two blocks.
\end{itemize}
The only remaining cases occur when the empty column lies between two borders in Configuration $\alpha$.
\begin{itemize}
\item {\bf Case $1\alpha 1$:} If $v_{3,j+2}$ is not colored $c$, Alice colors $v_{3,j+1}$ with color $c$, making it and $v_{2,j+1}$ safe.
\item {\bf Case $1\alpha 2$:} Else, if column $j+3$ is empty, Alice colors $v_{3,j+1}$ with color $c'$ obtaining the configuration $\Delta$.   
\item {\bf Case $1\alpha 3$:} Finally, if column $j+3$ is not empty, then, by Property $(4)$, $v_{2,j+2}$ is not a doctor. Alice colors $v_{3,j+1}$ with any available color $c'$, making it safe and $v_{2,j+1}$ sound (with doctors $v_{1,j+1}$ and $v_{2,j+2}$).

\end{itemize}

\begin{figure}[ht!]\centering
\begin{tikzpicture}[scale=0.195]
\tikzstyle{vertex}=[draw,circle,fill=black,inner sep=0pt, minimum size=1pt]

\begin{scope}[xshift=0cm, yshift=0cm, scale=2.5]
\foreach \i in {1,2,3}{
\path[draw, thin] (\i,0)--(\i,4);}
\foreach \j in {0,1,2,3,4}{ 
\path[draw, densely dotted](0,\j)--(4,\j);} 
\foreach \j in {0,1,2,3,4}{
\path[draw, thin] (1,\j)--(2,\j);} 
\path[draw, very thick] (0,0)--(2,0)--(2,4)--(0,4);
\path[draw, very thick] (4,0)--(3,0)--(3,4)--(4,4);
\draw (1.5,4.5) node {$j$};
\draw (0.5,1.5) node {$c$};
\draw (1.5,1.5) node {$0$};
\draw (1.5,2.5) node {$c$};
\draw (2.5,3.5) node {$0$};
\draw (2.5,2.5) node {\red{$c'$}};
\draw (2.5,1.5) node {$0$};
\draw (2.5,0.5) node {$0$};
\draw (1.8,-.75) node {Case $1\beta$}; 
\draw (1.8,-1.5) node {$\beta\to$ merge}; 
\end{scope}

\begin{scope}[xshift=20cm, yshift=0cm, scale=2.5]
\foreach \i in {1,2}{
\path[draw, thin] (\i,0)--(\i,4);}
\foreach \j in {0,1,2,3,4}{ 
\path[draw, thin] (0,\j)--(2,\j)--(3,\j);} 
\path[draw, very thick]  (0,0) --(1,0)--(1,1) --(1,2)--(1,3)--(1,4)--(0,4) ;
\path[draw, very thick]  (3,0) --(2,0)--(2,1) --(2,2)--(2,3)--(2,4)--(3,4) ;
\draw (0.5,4.5) node {$j$};
\draw (0.5,0.5) node {};
\draw (0.5,1.5) node {$c$};
\draw (0.5,2.5) node {};
\draw (0.5,3.5) node {$c$};
\draw (1.5,0.5) node {$0$};
\draw (1.5,1.5) node {$0$};
\draw (1.5,2.5) node {\red{$c$}};
\draw (1.5,3.5) node {$0$};
\draw (2.8,2.5) node {$\ne c$};
\draw (1.5,-.75) node {Case $1\alpha 1$};
\draw (1.5,-1.5) node {$\alpha$+$\alpha\to$ merge}; 
\end{scope}

\begin{scope}[xshift=40cm, yshift=0cm, scale=2.5]
\foreach \i in {1,2,3}{
\path[draw, thin] (\i,0)--(\i,4);}
\foreach \j in {0,1,2,3,4}{ 
\path[draw, thin] (0,\j)--(4,\j);} 
\path[draw, very thick]  (0,0)--(1,0)--(1,4)--(0,4);
\path[draw, very thick]  (3,0)--(2,0)--(2,4)--(3,4)--(3,0);
\draw (0.5,4.5) node {$j$};
\draw (0.5,0.5) node {};
\draw (0.5,1.5) node {$c$};
\draw (0.5,2.5) node {};
\draw (0.5,3.5) node {$c$};
\draw (1.5,0.5) node {$0$};
\draw (1.5,1.5) node {$0$};
\draw (1.5,2.5) node {$\red{c'}$};
\draw (1.5,3.5) node {$0$};
\draw (2.5,0.5) node {$c$};
\draw (2.5,1.5) node {$0$};
\draw (2.5,2.5) node {$c$};
\draw (2.5,3.5) node {$0$};
\draw (3.5,0.5) node {$0$};
\draw (3.5,1.5) node {$0$};
\draw (3.5,2.5) node {$0$};
\draw (3.5,3.5) node {$0$};
\draw (2.0,-.75) node {Case $1\alpha 2$};
\draw (2.0,-1.5) node {$\alpha$+$\alpha\to\Delta$}; 
\end{scope}

\begin{scope}[xshift=60cm, yshift=0cm, scale=2.5]
\foreach \i in {1,2,3}{
\path[draw, thin] (\i,0)--(\i,4);}
\foreach \j in {0,1,2,3,4}{ 
\path[draw, thin] (0,\j)--(3,\j);} 
\foreach \j in {0,1,2,3,4}{ 
\path[draw, densely dotted] (0,\j)--(1,\j)--(2,\j)--(3,\j)--(4,\j);}
\path[draw, very thick]  (0,0)--(1,0)--(1,4)--(0,4);
\path[draw, very thick]  (4,0)--(2,0)--(2,4)--(4,4);
\draw (0.5,4.5) node {$j$};
\draw (0.5,0.5) node {};
\draw (0.5,1.5) node {$c$};
\draw (0.5,2.5) node {};
\draw (0.5,3.5) node {$c$};
\draw (1.5,0.5) node {$0$};
\draw (1.5,1.5) node {$0$};
\draw (1.5,2.5) node {$\red{c'}$};
\draw (1.5,3.5) node {$0$};
\draw (2.5,0.5) node {$c$};
\draw (2.5,1.5) node {$0$};
\draw (2.5,2.5) node {$c$};
\draw (2.5,3.5) node {$0$};
\draw (3.8,4.5) node {$\ne 0$};
\draw (2.0,-0.75) node {Case $1\alpha 3$};
\draw (2.0,-1.5) node {$\alpha$+$\alpha\to$ merge};
\end{scope}

\end{tikzpicture}
\caption{Cases $1\beta$, $1\gamma$ and $1\alpha$: Bob has colored a safe or sound vertex inside a block and there is no uncolored vertex inside a block. Moreover, there are no free borders nor borders in configuration $\gamma, \delta$ or $\pi$. The red cell indicates Alice's move. All symmetrical configurations according to vertical/horizontal symmetry axis (column $j$ / line in the middle) are considered similarly.}
\label{fig:Case8}
\end{figure}
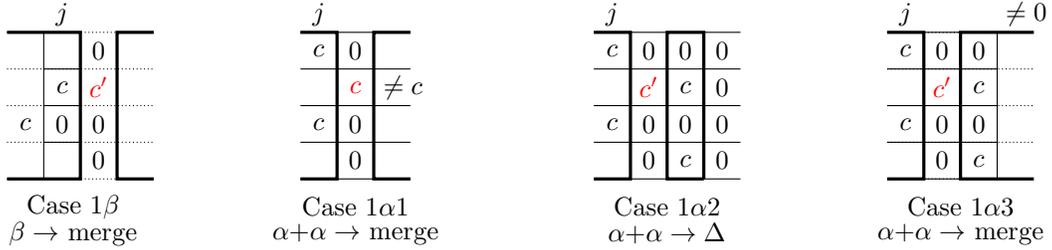
\end{description}

It can be checked that, in all subcases of Case $1$, after Alice's move, all invariants of the induction hypothesis still hold.

\item[Case 2. When Bob colors a vertex of a border (not in configuration $\Delta$, $\Lambda$ or $\Lambda'$).]~\\
Note that Bob cannot color a vertex of a border in configuration $\pi$ since all its vertices are already colored.
Moreover, a border in configuration $\pi$ cannot be free.

\begin{itemize}
\item  {\bf Case $2\delta$.} Bob colored $v_{1,j}$ or $v_{3,j}$ in a border in configuration $\delta$. If $v_{3,j+2}$ is not colored $c'$ (Case $2\delta 1$), Alice colors $v_{3,j+1}$ with $c'$ obtaining a border in configuration $\beta$ or merging two blocks. Otherwise (Case $2\delta 2$), she colors $v_{4,j+1}$ with $c'$, making $v_{3,j+1}$ safe and $v_{2,j+1}$ sound (with doctors $v_{3,j+1}$ and $v_{1,j+1}$) (see Figure~\ref{fig:2S}).

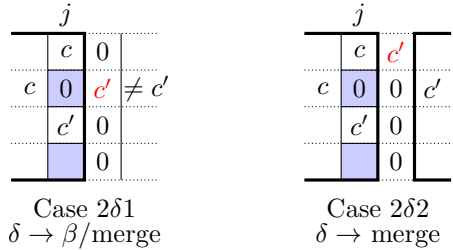
\begin{figure}[ht]\centering
\begin{tikzpicture}[scale=0.195]
\tikzstyle{vertex}=[draw,circle,fill=black,inner sep=0pt, minimum size=1pt]

\begin{scope}[xshift=0cm, yshift=0cm, scale=2.5]
\fill[blue!20] (2,0) rectangle (3,1);
\fill[blue!20] (2,2) rectangle (3,3);
\foreach \i in {2,3,4}{
\path[draw, thin] (\i,0)--(\i,4) ;}
\foreach \j in {0,1,2,3,4}{
\path[draw, densely dotted] (1,\j)--(5,\j);} 
\foreach \j in {0,1,2,3,4}{ 
\path[draw, thin] (2,\j)--(3,\j);} 
\path[draw, very thick]  (1,0)--(3,0)--(3,4)--(1,4) ;
\draw (2.5,4.5) node {$j$};
\draw (1.5,2.5) node {$c$};
\draw (2.5,3.5) node {$c$};
\draw (2.5,2.5) node {$0$};
\draw (2.5,1.5) node {$c'$};
\draw (3.5,0.5) node {$0$};
\draw (3.5,1.5) node {$0$};
\draw (3.5,2.5) node {$\red{c'}$};
\draw (3.5,3.5) node {$0$};
\draw (4.7,2.5) node {$\ne c'$};
\draw (3.0,-.75) node {Case $2\delta 1$}; 
\draw (3.0,-1.5) node {$\delta\to\beta$/merge}; 
\end{scope}

\begin{scope}[xshift=20cm, yshift=0cm, scale=2.5]
\fill[blue!20] (2,0) rectangle (3,1);
\fill[blue!20] (2,2) rectangle (3,3);
\foreach \i in {2,3,4}{
\path[draw, thin] (\i,0)--(\i,4) ;}
\foreach \j in {0,1,2,3,4}{
\path[draw, densely dotted] (1,\j)--(5,\j);} 
\foreach \j in {0,1,2,3,4}{ 
\path[draw, thin] (2,\j)--(3,\j);} 
\path[draw, very thick]  (1,0)--(3,0)--(3,4)--(1,4) ;
\path[draw, very thick]  (5,0)--(4,0)--(4,4)--(5,4) ;
\draw (2.5,4.5) node {$j$};
\draw (1.5,2.5) node {$c$};
\draw (2.5,3.5) node {$c$};
\draw (2.5,2.5) node {$0$};
\draw (2.5,1.5) node {$c'$};
\draw (3.5,0.5) node {$0$};
\draw (3.5,1.5) node {$0$};
\draw (3.5,2.5) node {$0$};
\draw (3.5,3.5) node {$\red{c'}$};
\draw (4.5,2.5) node {$c'$};
\draw (3.0,-.75) node {Case $2\delta 2$}; 
\draw (3.0,-1.5) node {$\delta\to$ merge}; 
\end{scope}
\end{tikzpicture}
\caption{Case $2\delta$: Bob just colored a vertex (in blue) of a border in configuration $\delta$.}
\label{fig:2S}
\end{figure}

\item  {\bf Case $2\alpha\beta\gamma F$ (Free border in Configuration $\alpha$, $\beta$ or $\gamma$):} Bob colors a vertex $v$ of a free border of a block $B$ in configuration $\alpha$, $\beta$ or $\gamma$. Then Alice can extend this block $B$ with a new border in configuration $\beta$ as described in Figure~\ref{fig:nextMoveAlice1} (in case of a right border).
In all subcases, Bob just colored a vertex of column $j$: if we need to specify which vertex, it is depicted in blue, otherwise it is any of the white cells of column $j$.
The answer of Alice is in red. 

Also recall that, if $B$ is just one column $j$ (in Configuration $\alpha$), then it is a left border and therefore, it is free if columns $j-2$ and $j-1$ are empty. That is, if $B$ is a single column $j$ and column $j-2$ is not empty, it must not be considered in this case (even if column $j+1$ and $j+2$ are empty) but in the following Case 2$\alpha$ (indeed, applying Rule $2\alpha F$ ``to the right" in the case of a single column would not guarantee Property $(5)$).

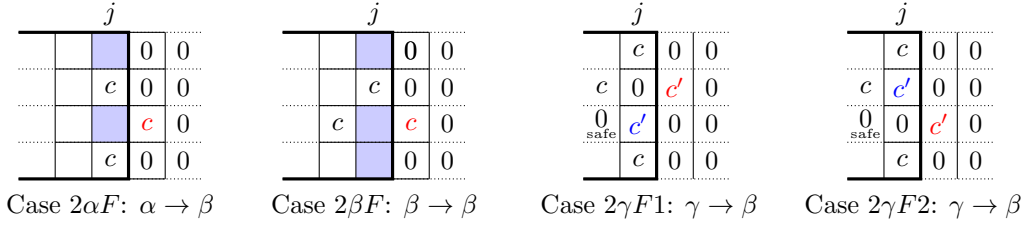
\begin{figure}[ht!]\centering
\begin{tikzpicture}[scale=0.195]
\tikzstyle{vertex}=[draw,circle,fill=black,inner sep=0pt, minimum size=1pt]

\begin{scope}[xshift=0cm, yshift=0cm, scale=2.5]
\fill[blue!20] (2,1) rectangle (3,2);
\fill[blue!20] (2,3) rectangle (3,4);
\foreach \i in {1,2,3,4}{
\path[draw, thin] (\i,0)--(\i,4);}
\foreach \j in {0,1,2,3,4}{ 
\path[draw, densely dotted](0,\j)--(5,\j);} 
\foreach \j in {0,1,2,3,4}{ 
\path[draw, thin] (1,\j)--(2,\j)--(3,\j)--(4,\j);} 
\path[draw, very thick]  (0,0) --(1,0)--(2,0)--(3,0) --(3,1)--(3,2)--(3,3)--(3,4)--(2,4)--(1,4)-- (0,4) ;
\draw (2.5,4.5) node {$j$};
\draw (2.5,0.5) node {$c$};
\draw (2.5,2.5) node {$c$};
\draw (3.5,0.5) node {$0$};
\draw (3.5,1.5) node {\red{$c$}};
\draw (3.5,2.5) node {$0$};
\draw (3.5,3.5) node {$0$};
\draw (4.5,0.5) node {$0$};
\draw (4.5,1.5) node {$0$};
\draw (4.5,2.5) node {$0$};
\draw (4.5,3.5) node {$0$};
\draw (2.5,-0.75) node {Case $2\alpha F$: $\alpha\to\beta$}; 
\end{scope}

\begin{scope}[xshift=18cm, yshift=0cm, scale=2.5]
\fill[blue!20] (2,0) rectangle (3,2);
\fill[blue!20] (2,3) rectangle (3,4);
\foreach \i in {1,2,3,4}{
\path[draw, thin] (\i,0)--(\i,4);}
\foreach \j in {0,1,2,3,4}{ 
\path[draw, densely dotted](0,\j)--(5,\j);} 
\foreach \j in {0,1,2,3,4}{ 
\path[draw, thin] (1,\j)--(2,\j)--(3,\j)--(4,\j);} 
\path[draw, very thick]  (0,0) --(1,0)--(2,0)--(3,0) --(3,1)--(3,2)--(3,3)--(3,4)--(2,4)--(1,4)-- (0,4)  ;
\draw (2.5,4.5) node {$j$};
\draw (1.5,1.5) node {$c$};
\draw (2.5,2.5) node {$c$};
\draw (3.5,0.5) node {$0$};
\draw (3.5,1.5) node {\red{$c$}};
\draw (3.5,2.5) node {$0$};
\draw (3.5,3.5) node {$0$};
\draw (3.5,3.5) node {$0$};
\draw (4.5,0.5) node {$0$};
\draw (4.5,1.5) node {$0$};
\draw (4.5,2.5) node {$0$};
\draw (4.5,3.5) node {$0$};
\draw (2.5,-.75) node {Case $2\beta F$: $\beta\to\beta$}; 
\end{scope}

\begin{scope}[xshift=36cm, yshift=0cm, scale=2.5]
\foreach \i in {2,3,4}{
\path[draw, thin] (\i,0)--(\i,4);}
\foreach \j in {0,1,2,3,4}{
\path[draw, densely dotted] (1,\j)--(5,\j);} 
\foreach \j in {0,1,2,3,4}{
\path[draw, thin] (2,\j)--(3,\j);} 
\path[draw, very thick]  (1,0)--(3,0)--(3,4)--(1,4)  ;
\draw (2.5,4.5) node {$j$};
\draw (1.5,1.65) node {$0$};
\draw (1.5,1.25) node {\tiny{safe}};
\draw (1.5,2.5) node {$c$};
\draw (2.5,0.5) node {$c$};
\draw (2.5,1.5) node {$\blue{c'}$};
\draw (2.5,2.5) node {$0$};
\draw (2.5,3.5) node {$c$};
\draw (3.5,0.5) node {$0$};
\draw (3.5,1.5) node {$0$};
\draw (3.5,2.5) node {$\red{c'}$};
\draw (3.5,3.5) node {$0$};
\draw (4.5,0.5) node {$0$};
\draw (4.5,1.5) node {$0$};
\draw (4.5,2.5) node {$0$};
\draw (4.5,3.5) node {$0$};
\draw (2.8,-.75) node {Case $2\gamma F1$: $\gamma\to\beta$}; 
\end{scope}

\begin{scope}[xshift=54cm, yshift=0cm, scale=2.5]
\foreach \i in {2,3,4}{
\path[draw, thin] (\i,0)--(\i,4);}
\foreach \j in {0,1,2,3,4}{
\path[draw, densely dotted] (1,\j)--(5,\j);} 
\foreach \j in {0,1,2,3,4}{
\path[draw, thin] (2,\j)--(3,\j);} 
\path[draw, very thick]  (1,0)--(3,0)--(3,4)--(1,4)  ;
\draw (2.5,4.5) node {$j$};
\draw (1.5,1.65) node {$0$};
\draw (1.5,1.25) node {\tiny{safe}};
\draw (1.5,2.5) node {$c$};
\draw (2.5,0.5) node {$c$};
\draw (2.5,1.5) node {$0$};
\draw (2.5,2.5) node {$\blue{c'}$};
\draw (2.5,3.5) node {$c$};
\draw (3.5,0.5) node {$0$};
\draw (3.5,1.5) node {$\red{c'}$};
\draw (3.5,2.5) node {$0$};
\draw (3.5,3.5) node {$0$};
\draw (4.5,0.5) node {$0$};
\draw (4.5,1.5) node {$0$};
\draw (4.5,2.5) node {$0$};
\draw (4.5,3.5) node {$0$};
\draw (2.8,-.75) node {Case $2\gamma F2$: $\gamma\to\beta$}; 
\end{scope}

\end{tikzpicture}
\caption{Case $2\alpha\beta\gamma F$: Bob just colored a vertex (in blue) of a free border in configuration $\alpha$, $\beta$ or $\gamma$.
Alice's answer is in red. The bold line figures the surrounding of the updated block (before Alice's move). The block increases by one column.}
\label{fig:nextMoveAlice1}
\end{figure}

Following the orientation of Figure \ref{fig:nextMoveAlice1}:
\begin{itemize}
\item {\bf Case $2\alpha F$:} 
If Bob colors a vertex of a border in configuration $\alpha$, Alice colors $v_{2,j+1}$ with $c$, obtaining a border in configuration $\beta$.
\item {\bf Case $2\beta F$:} 
If Bob colors a vertex of a border in configuration $\beta$, Alice colors $v_{2,j+1}$ with $c$, obtaining a border in configuration $\beta$.
\item {\bf Case $2\gamma F1$:} 
If Bob plays $c'\ne c$ in the vertex $v_{2,j}$ of a border in configuration $\gamma$, Alice colors $v_{3,j+1}$ with $c'$, obtaining a border in configuration $\beta$.
\item {\bf Case $2\gamma F2$:} 
If Bob plays $c'\ne c$ in the vertex $v_{3,j}$ of a border in configuration $\gamma$, Alice colors $v_{2,j+1}$ with $c'$, obtaining a border in configuration $\beta$.
\end{itemize}

\item {\bf Case $2\beta$ (Non-Free border in Configuration $\beta$):} Bob colors a vertex $v$ of a non-free border of a block $B$ in configuration $\beta$. 
See Figure \ref{fig:nextMoveAliceXb} (the symmetric cases can be dealt with accordingly). The blue squares are the possible vertices that Bob colored.
If Bob colored a vertex of column $j$ other than $v_{2,j}$ (Case $2\beta 1$), Alice colors $v_{3,j+1}$ with any available color $c'$ and we are done, since $v_{2,j+1}$ is sound (with doctors $v_{2,j}$ and $v_{1,j+1}$).
Thus assume Bob just colored $v_{2,j}$ with $c'$.
If $v_{3,j+2}$ is not colored $c'$ (Case $2\beta 2$), Alice colors $v_{3,j+1}$ with $c'$ making $v_{2,j+1}$ and $v_{3,j+1}$ safe.
Otherwise, if $v_{4,j}$ is not colored $c'$ (Case $2\beta 3$), Alice colors $v_{4,j+1}$ with $c'$ making $v_{3,j+1}$ safe and $v_{2,j+1}$ sound (with doctors $v_{1,j+1}$ and $v_{3,j+1}$).
Finally, if $v_{4,j}$ is colored $c'$ (Case $2\beta 4$), then this is equivalent to the situation in which $v_{2,j}$ and $v_{4,j}$ were colored $c'$ (configuration $\alpha$) and Bob just colored $v_{3,j}$ with $c$, which is treated in the next case $2\alpha2$ regarding the configuration $\alpha$.

\begin{figure}[ht!]\centering
\begin{tikzpicture}[scale=0.195]
\tikzstyle{vertex}=[draw,circle,fill=black,inner sep=0pt, minimum size=1pt]

\begin{scope}[xshift=0cm, yshift=0cm, scale=2.5]
\fill[blue!20] (1,0) rectangle (2,1) ;
\fill[blue!20] (1,3) rectangle (2,4);
\foreach \i in {1,2,3}{
\path[draw, thin] (\i,0)--(\i,4);}
\foreach \j in {0,1,2,3,4}{ 
\path[draw, densely dotted] (0,\j)--(4,\j);} 
\foreach \j in {0,1,2,3,4}{ 
\path[draw, thin] (1,\j)--(3,\j);} 
\path[draw, very thick] (0,0)--(2,0)--(2,4)--(0,4);
\path[draw, very thick] (4,0)--(3,0)--(3,4)--(4,4);
\draw (1.5,4.5) node {$j$};
\draw (0.5,1.5) node {$c$};
\draw (1.5,2.5) node {$c$};
\draw (2.5,2.5) node {\red{$c'$}};
\draw (1.5,1.5) node {$0$};
\draw (2.5,0.5) node {$0$};
\draw (2.5,1.5) node {$0$};
\draw (2.5,3.5) node {$0$};
\draw (2.0,-.75) node {Case $2\beta 1$};
\draw (2.0,-1.5) node {merge};
\end{scope}

\begin{scope}[xshift=17cm, yshift=0cm, scale=2.5]
\foreach \i in {1,2,3}{
\path[draw, thin] (\i,0)--(\i,4);}
\foreach \j in {0,1,2,3,4}{ 
\path[draw, densely dotted] (0,\j)--(4,\j);} 
\foreach \j in {0,1,2,3,4}{ 
\path[draw, thin] (1,\j)--(3,\j) ;} 
\path[draw, very thick] (0,0)--(2,0)--(2,4)--(0,4);
\path[draw, very thick] (4,0)--(3,0)--(3,4)--(4,4);
\draw (1.5,4.5) node {$j$};
\draw (0.5,1.5) node {$c$};
\draw (1.5,2.5) node {$c$};
\draw (1.5,1.5) node {\blue{$c'$}};
\draw (3.75,2.5) node {$\neq c'$};
\draw (2.5,0.5) node {$0$};
\draw (2.5,2.5) node {\red{$c'$}};
\draw (2.5,3.5) node {$0$};
\draw (2.5,1.5) node {$0$};
\draw (2.0,-.75) node {Case $2\beta 2$};
\draw (2.0,-1.5) node {merge};
\end{scope}

\begin{scope}[xshift=34cm, yshift=0cm, scale=2.5]
\foreach \i in {2,3}{
\path[draw, thin] (\i,0)--(\i,4);}
\path[draw, densely dotted] (1,0)--(1,4);
\foreach \j in {0,1,2,3,4}{ 
\path[draw, densely dotted] (0,\j)--(4,\j) ;} 
\foreach \j in {0,1,2,3,4}{ 
\path[draw, thin] (1,\j)--(3,\j) ;} 
\path[draw, very thick] (0,0)--(2,0)--(2,4)--(0,4);
\path[draw, very thick] (4,0)--(3,0)--(3,4)--(4,4);
\draw (1.5,4.5) node {$j$};
\draw (0.5,1.5) node {$c$};
\draw (1.5,2.5) node {$c$};
\draw (1.25,3.5) node {$\neq c'$};
\draw (1.5,1.5) node {\blue{$c'$}};
\draw (3.5,2.5) node {$c'$};
\draw (2.5,0.5) node {$0$};
\draw (2.5,2.5) node {$0$};
\draw (2.5,3.5) node {\red{$c'$}};
\draw (2.5,1.5) node {$0$};
\draw (2.0,-.75) node {Case $2\beta 3$};
\draw (2.0,-1.5) node {merge};
\end{scope}

\begin{scope}[xshift=51cm, yshift=0cm, scale=2.5]
\foreach \i in {2,3}{
\path[draw, thin] (\i,0)--(\i,4);}
\path[draw, densely dotted] (1,0)--(1,4);
\foreach \j in {0,1,2,3,4}{ 
\path[draw, densely dotted] (0,\j)--(4,\j) ;} 
\foreach \j in {0,1,2,3,4}{ 
\path[draw, thin] (1,\j)--(3,\j) ;} 
\path[draw, very thick] (0,0)--(2,0)--(2,4)--(0,4);
\path[draw, very thick] (4,0)--(3,0)--(3,4)--(4,4);
\draw (1.5,4.5) node {$j$};
\draw (0.5,1.5) node {$c$};
\draw (1.5,2.5) node {$c$};
\draw (1.5,3.5) node {$c'$};
\draw (1.5,1.5) node {\blue{$c'$}};
\draw (3.5,2.5) node {$c'$};
\draw (2.5,0.5) node {$0$};
\draw (2.5,2.5) node {$0$};
\draw (2.5,1.5) node {$0$};
\draw (2.0,-.75) node {Case $2\beta 4$:};
\draw (2.0,-1.5) node {treated as Case $2\alpha2$};
\end{scope}

\end{tikzpicture}
\caption{Case $2\beta$: Bob colored a vertex of a non-free border in configuration $\beta$ at column $j$. Alice merge the two blocks in her next move.}
\label{fig:nextMoveAliceXb}
\end{figure}
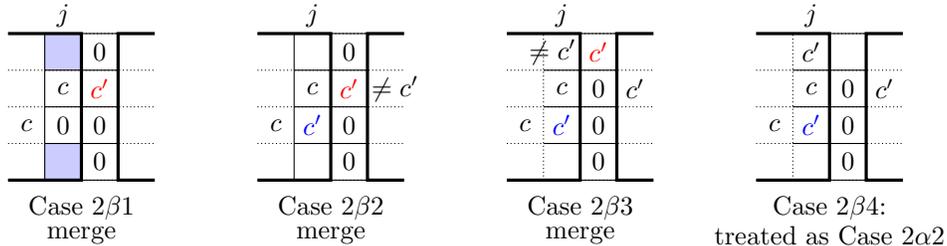

\item  {\bf Case $2\gamma$ (Non-Free border in Configuration $\gamma$):} Bob colors a vertex $v$ of a non-free border of a block $B$ in configuration $\gamma$ at column $j$. 
See Figure \ref{fig:nextMoveAliceXc} (the symmetric cases can be dealt with accordingly).
Alice plays, merging the two blocks (with all vertices safe or sound) or obtaining the configuration $\pi$.

\begin{figure}[ht!]\centering
\begin{tikzpicture}[scale=0.195]
\tikzstyle{vertex}=[draw,circle,fill=black,inner sep=0pt, minimum size=1pt]

\begin{scope}[xshift=0cm, yshift=0cm, scale=2.5]
\foreach \i in {3,4,5}{
\path[draw, thin] (\i,0)--(\i,4);}
\foreach \j in {0,1,2,3,4}{
\path[draw, densely dotted] (2,\j)--(6,\j);} 
\foreach \j in {0,1,2,3,4}{ 
\path[draw, thin] (3,\j)--(5,\j);}
\path[draw, very thick]  (2,0)--(4,0)--(4,4)--(2,4);
\path[draw, very thick]  (6,0)--(5,0)--(5,4)--(6,4);
\draw (3.5,4.5) node {$j$};
\draw (2.5,2.5) node {$c$};
\draw (2.5,1.65) node {$0$};
\draw (2.5,1.25) node {\tiny{safe}};
\draw (3.5,0.5) node {$c$};
\draw (3.5,1.5) node {$\blue{c'}$};
\draw (3.5,2.5) node {$0$};
\draw (3.5,3.5) node {$c$};
\draw (4.5,0.5) node {$0$};
\draw (4.5,1.5) node {$\red{c''}$};
\draw (4.5,2.5) node {$0$};
\draw (4.5,3.5) node {$0$};
\draw (3.5,-0.75) node {Case $2\gamma 1$: merge}; 
\end{scope}

\begin{scope}[xshift=25cm, yshift=0cm, scale=2.5]
\foreach \i in {3,4,5}{
\path[draw, thin] (\i,0)--(\i,4);}
\foreach \j in {0,1,2,3,4}{
\path[draw, densely dotted] (2,\j)--(6,\j);} 
\foreach \j in {0,1,2,3,4}{ 
\path[draw, thin] (3,\j)--(5,\j);}
\path[draw, very thick]  (2,0)--(4,0)--(4,4)--(2,4);
\path[draw, very thick]  (6,0)--(5,0)--(5,4)--(6,4);
\draw (3.5,4.5) node {$j$};
\draw (2.5,2.5) node {$c$};
\draw (2.5,1.65) node {$0$};
\draw (2.5,1.25) node {\tiny{safe}};
\draw (3.5,0.5) node {$c$};
\draw (3.5,1.5) node {$0$};
\draw (3.5,2.5) node {$\blue{c'}$};
\draw (3.5,3.5) node {$c$};
\draw (4.5,0.5) node {$0$};
\draw (4.5,1.5) node {$\red{c'}$};
\draw (4.5,2.5) node {$0$};
\draw (4.5,3.5) node {$0$};
\draw (5.8,1.5) node {$\ne c'$};
\draw (3.5,-0.75) node {Case $2\gamma 2$: merge}; 
\end{scope}

\begin{scope}[xshift=50cm, yshift=0cm, scale=2.5]
\foreach \i in {3,4,5}{
\path[draw, thin] (\i,0)--(\i,4);}
\foreach \j in {0,1,2,3,4}{
\path[draw, densely dotted] (2,\j)--(6,\j);} 
\foreach \j in {0,1,2,3,4}{ 
\path[draw, thin] (3,\j)--(5,\j);}
\path[draw, very thick]  (2,0)--(4,0)--(4,4)--(2,4);
\path[draw, very thick]  (6,0)--(5,0)--(5,4)--(6,4);
\draw (3.5,4.5) node {$j$};
\draw (2.5,2.5) node {$c$};
\draw (2.5,1.65) node {$0$};
\draw (2.5,1.25) node {\tiny{safe}};
\draw (3.5,0.5) node {$c$};
\draw (3.5,1.5) node {$\red{c''}$};
\draw (3.5,2.5) node {$\blue{c'}$};
\draw (3.5,3.5) node {$c$};
\draw (4.5,0.5) node {$0$};
\draw (4.5,1.5) node {$0$};
\draw (4.5,2.5) node {$0$};
\draw (4.5,3.5) node {$0$};
\draw (5.5,1.5) node {$c'$};
\draw (3.5,-0.75) node {Case $2\gamma 3$: $\gamma\to\pi$}; 
\end{scope}

\end{tikzpicture}
\caption{Case $2\gamma$: Bob colors a vertex of border in configuration $\gamma$ at column $j$.}
\label{fig:nextMoveAliceXc}
\end{figure}
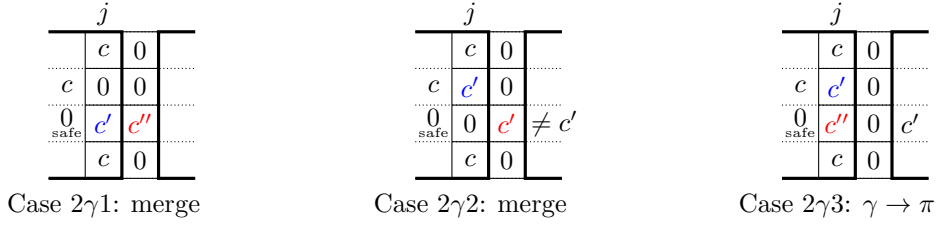

\begin{itemize}
\item {\bf Case $2\gamma 1$:} 
If Bob colors $v_{2,j}$, then Alice colors $v_{2,j+1}$ with any available color $c''$, making $v_{3,j+1}$ either safe or sound (with doctors $v_{3,j}$ and $v_{4,j+1}$).
\item {\bf Case $2\gamma 2$:} 
If Bob colors $v_{3,j}$ with $c'$ and $v_{2,j+2}$ is not colored $c'$, Alice colors $v_{2,j+1}$ with $c'$, making $v_{2,j}$ and $v_{3,j+1}$ safe.
\item {\bf Case $2\gamma 3$:} 
Otherwise, Bob colors $v_{3,j}$ with $c'$ and $v_{2,j+2}$ is colored $c'$.
Then Alice colors $v_{2,j}$ with $c''$ such that $c''\ne c$ and $c''\ne c'$, obtaining a border in configuration $\pi$. 
\end{itemize}

\item {\bf Case $2\alpha$ (Non-Free border in Configuration $\alpha$ of a block with at least 2 columns):} Bob colors a vertex $v$ of a non-free border in configuration $\alpha$ of a block $B$ with at least 2 columns.
The different possibilities are depicted in Figure~\ref{fig:nextMoveAliceX} (all symmetric cases can be dealt with accordingly).
If we need to specify which vertex Bob colored, it is depicted in blue. Alice' answer is in red in some subcases. The bold line figures the surrounding of the updated block (before Alice's move). In each case, two blocks are merged into one.

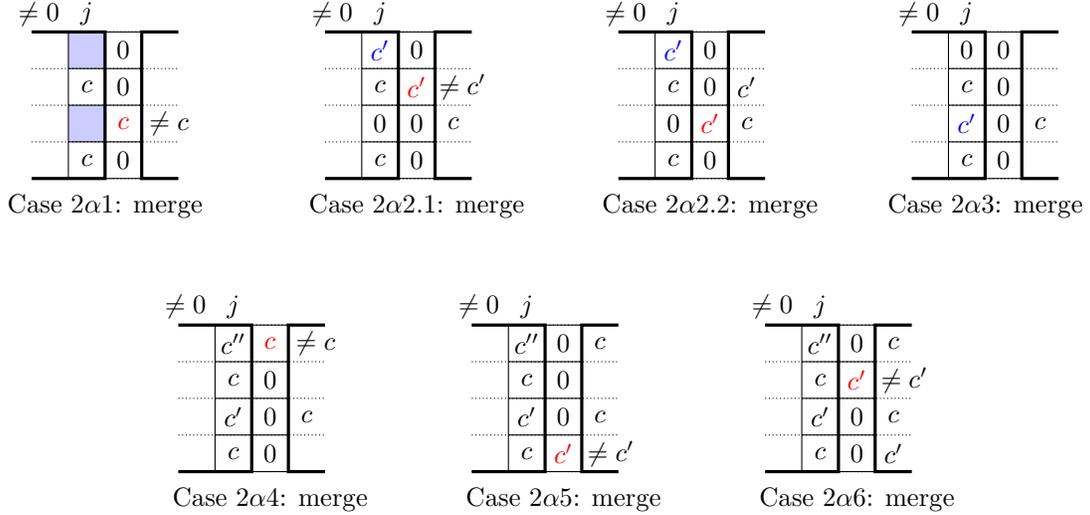
\begin{figure}[ht!]\centering
\begin{tikzpicture}[scale=0.195]
\tikzstyle{vertex}=[draw,circle,fill=black,inner sep=0pt, minimum size=1pt]

\begin{scope}[xshift=0cm, yshift=0cm, scale=2.5]
\fill[blue!20] (1,1) rectangle (2,2);
\fill[blue!20] (1,3) rectangle (2,4);
\foreach \i in {1,2,3}{
\path[draw, thin] (\i,0)--(\i,4);}
\foreach \j in {0,1,2,3,4}{ 
\path[draw, densely dotted] (0,\j)--(4,\j);} 
\foreach \j in {0,1,2,3,4}{ 
\path[draw, thin] (1,\j)--(3,\j);} 
\path[draw, very thick] (0,0)--(2,0)--(2,4)--(0,4);
\path[draw, very thick] (4,0)--(3,0)--(3,4)--(4,4);
\draw (0.2,4.5) node {$\ne 0$};
\draw (1.5,4.5) node {$j$};
\draw (1.5,2.5) node {$c$};
\draw (1.5,0.5) node {$c$};
\draw (2.5,0.5) node {$0$};
\draw (2.5,3.5) node {$0$};
\draw (2.5,2.5) node {$0$};
\draw (2.5,1.5) node {\red{$c$}};
\draw (3.75,1.5) node {$\neq c$};
\draw (2,-0.75) node {Case $2\alpha 1$: merge}; 
\end{scope}

\begin{scope}[xshift=20cm, yshift=0cm, scale=2.5]
\foreach \i in {1,2,3}{
\path[draw, thin] (\i,0)--(\i,4);}
\foreach \j in {0,1,2,3,4}{ 
\path[draw, densely dotted] (0,\j)--(4,\j);} 
\foreach \j in {0,1,2,3,4}{ 
\path[draw, thin] (1,\j)--(3,\j);} 
\path[draw, very thick] (0,0)--(2,0)--(2,4)--(0,4);
\path[draw, very thick] (4,0)--(3,0)--(3,4)--(4,4);
\draw (0.2,4.5) node {$\ne 0$};
\draw (1.5,4.5) node {$j$};
\draw (1.5,0.5) node {$c$};
\draw (1.5,1.5) node {$0$};
\draw (1.5,2.5) node {$c$};
\draw (1.5,3.5) node {\blue{$c'$}};
\draw (2.5,0.5) node {$0$};
\draw (2.5,1.5) node {$0$};
\draw (2.5,2.5) node {\red{$c'$}};
\draw (2.5,3.5) node {$0$};
\draw (3.5,1.5) node {$c$};
\draw (3.75,2.5) node {$\neq c'$};
\draw (2.5,-0.75) node {Case $2\alpha 2.1$: merge};
\end{scope}

\begin{scope}[xshift=40cm, yshift=0cm, scale=2.5]
\foreach \i in {1,2,3}{
\path[draw, thin] (\i,0)--(\i,4);}
\foreach \j in {0,1,2,3,4}{ 
\path[draw, densely dotted] (0,\j)--(4,\j);} 
\foreach \j in {0,1,2,3,4}{ 
\path[draw, thin] (1,\j)--(3,\j);} 
\path[draw, very thick] (0,0)--(2,0)--(2,4)--(0,4);
\path[draw, very thick] (4,0)--(3,0)--(3,4)--(4,4);
\draw (0.2,4.5) node {$\ne 0$};
\draw (1.5,4.5) node {$j$};
\draw (1.5,0.5) node {$c$};
\draw (1.5,1.5) node {$0$};
\draw (1.5,2.5) node {$c$};
\draw (1.5,3.5) node {\blue{$c'$}};
\draw (2.5,0.5) node {$0$};
\draw (2.5,1.5) node {\red{$c'$}};
\draw (2.5,2.5) node {$0$};
\draw (2.5,3.5) node {$0$};
\draw (3.5,1.5) node {$c$};
\draw (3.5,2.5) node {$c'$};
\draw (2.5,-0.75) node {Case $2\alpha 2.2$: merge};
\end{scope}

\begin{scope}[xshift=60cm, yshift=0cm, scale=2.5]
\foreach \i in {1,2,3}{
\path[draw, thin] (\i,0)--(\i,4);}
\foreach \j in {0,1,2,3,4}{ 
\path[draw, densely dotted](0,\j)--(4,\j) ;} 
\foreach \j in {0,1,2,3,4}{ 
\path[draw, thin] (1,\j)--(3,\j);} 
\path[draw, very thick] (0,0)--(2,0)--(2,4)--(0,4);
\path[draw, very thick] (4,0)--(3,0)--(3,4)--(4,4);
\draw (0.2,4.5) node {$\ne 0$};
\draw (1.5,4.5) node {$j$};
\draw (1.5,0.5) node {$c$};
\draw (1.5,1.5) node {$\blue{c'}$};
\draw (1.5,2.5) node {$c$};
\draw (1.5,3.5) node {$0$};
\draw (2.5,0.5) node {$0$};
\draw (2.5,1.5) node {$0$};
\draw (2.5,2.5) node {$0$};
\draw (2.5,3.5) node {$0$};
\draw (3.5,0.5) node {};
\draw (3.5,1.5) node {$c$};
\draw (3.5,2.5) node {};
\draw (3.5,3.5) node {};
\draw (2,-0.75) node {Case $2\alpha 3$: merge};
\end{scope}

\begin{scope}[xshift=10cm, yshift=-20cm, scale=2.5]
\foreach \i in {1,2,3}{
\path[draw, thin] (\i,0)--(\i,4);}
\foreach \j in {0,1,2,3,4}{ 
\path[draw, densely dotted] (0,\j)--(4,\j) ;} 
\foreach \j in {0,1,2,3,4}{ 
\path[draw, thin] (1,\j)--(3,\j) ;} 
\path[draw, very thick] (0,0)--(2,0)--(2,4)--(0,4);
\path[draw, very thick] (4,0)--(3,0)--(3,4)--(4,4);
\draw (0.2,4.5) node {$\ne 0$};
\draw (1.5,4.5) node {$j$};
\draw (1.5,0.5) node {$c$};
\draw (1.5,1.5) node {$c'$};
\draw (1.5,3.5) node {$c''$};
\draw (1.5,2.5) node {$c$};
\draw (2.5,0.5) node {$0$};
\draw (2.5,1.5) node {$0$};
\draw (2.5,2.5) node {$0$};
\draw (2.5,3.5) node {$\red{c}$};
\draw (3.5,1.5) node {$c$};
\draw (3.75,3.5) node {$\neq c$};
\draw (2.5,-0.75) node {Case $2\alpha 4$: merge}; 
\end{scope}

\begin{scope}[xshift=30cm, yshift=-20cm, scale=2.5]
\foreach \i in {1,2,3}{
\path[draw, thin] (\i,0)--(\i,4);}
\foreach \j in {0,1,2,3,4}{ 
\path[draw, densely dotted] (0,\j)--(4,\j) ;} 
\foreach \j in {0,1,2,3,4}{ 
\path[draw, thin] (1,\j)--(3,\j) ;} 
\path[draw, very thick] (0,0)--(2,0)--(2,4)--(0,4);
\path[draw, very thick] (4,0)--(3,0)--(3,4)--(4,4);
\draw (0.2,4.5) node {$\ne 0$};
\draw (1.5,4.5) node {$j$};
\draw (1.5,0.5) node {$c$};
\draw (1.5,1.5) node {$c'$};
\draw (1.5,3.5) node {$c''$};
\draw (1.5,2.5) node {$c$};
\draw (2.5,0.5) node {$\red{c'}$};
\draw (2.5,1.5) node {$0$};
\draw (2.5,2.5) node {$0$};
\draw (2.5,3.5) node {$0$};
\draw (3.8,0.5) node {$\ne c'$};
\draw (3.5,1.5) node {$c$};
\draw (3.5,3.5) node {$c$};
\draw (2.5,-0.75) node {Case $2\alpha 5$: merge}; 
\end{scope}

\begin{scope}[xshift=50cm, yshift=-20cm, scale=2.5]
\foreach \i in {1,2,3}{
\path[draw, thin] (\i,0)--(\i,4);}
\foreach \j in {0,1,2,3,4}{ 
\path[draw, densely dotted] (0,\j)--(4,\j) ;} 
\foreach \j in {0,1,2,3,4}{ 
\path[draw, thin] (1,\j)--(3,\j) ;} 
\path[draw, very thick] (0,0)--(2,0)--(2,4)--(0,4);
\path[draw, very thick] (4,0)--(3,0)--(3,4)--(4,4);
\draw (0.2,4.5) node {$\ne 0$};
\draw (1.5,4.5) node {$j$};
\draw (1.5,0.5) node {$c$};
\draw (1.5,1.5) node {$c'$};
\draw (1.5,3.5) node {$c''$};
\draw (1.5,2.5) node {$c$};
\draw (2.5,0.5) node {$0$};
\draw (2.5,1.5) node {$0$};
\draw (2.5,2.5) node {$\red{c'}$};
\draw (2.5,3.5) node {$0$};
\draw (3.5,0.5) node {$c'$};
\draw (3.5,1.5) node {$c$};
\draw (3.8,2.5) node {$\ne c'$};
\draw (3.5,3.5) node {$c$};
\draw (2.5,-0.75) node {Case $2\alpha 6$: merge}; 
\end{scope}

\end{tikzpicture}
\caption{Case $2\alpha$: Bob colors a vertex of a border in configuration $\alpha$ at column $j$ with a color distinct from $c$, in a block containing at least two columns. In cases $2\alpha 4$,  $2\alpha 5$ and  $2\alpha 6$, $c''$ is any color distinct from $c$ (in particular, $c''$ might be equal to $c'$).} 
\label{fig:nextMoveAliceX}
\end{figure}

\begin{itemize}
\item {\bf Case $2\alpha 1$:} If $v_{2,j+2}$ is not colored $c$,
Alice colors $v_{2,j+1}$ with $c$ making $v_{2,j+1}$ and $v_{3,j+1}$ safe.
\end{itemize}
From now on, let us assume that $c(v_{2,j+2})=c$.
\begin{itemize}
\item {\bf Case $2\alpha 2$:}
If Bob colors $v_{4,j}$ with $c'$ and $v_{2,j}$ is not colored yet, then Alice colors $v_{3,j+1}$ with $c'$ if available (case $2\alpha 2.1$), making $v_{3,j+1}$ safe and $v_{2,j+1}$ sound (with doctors $v_{2,j}$ and $v_{1,j+1}$).
If $c'$ is unavailable (case $2\alpha 2.2$), meaning that $v_{3,j+2}$ is colored $c'$, then Alice can color $v_{2,j+1}$ with color $c'$, making $v_{2,j+1}$ and $v_{3,j+1}$ safe.
\item {\bf Case $2\alpha 3$:} Assume that Bob colored $v_{2,j}$ with color $c'$ and $v_{4,j}$ is not colored yet. If $v_{3,j+2}$ is not colored $c'$, Alice colors $v_{3,j+1}$ with $c'$ making $v_{2,j+1}$ and $v_{3,j+1}$ safe. Otherwise, Alice colors $v_{4,j+1}$ with $c'$ making $v_{3,j+1}$ safe and $v_{2,j+1}$ sound (with doctors $v_{1,j+1}$ and $v_{3,j+1}$).
\end{itemize}

In the next three subcases, assume that after Bob's move, all four vertices of column $j$ are colored.

\begin{itemize}
\item {\bf Case $2\alpha 4$:} 
If $v_{4,j+2}$ is not colored $c$, Alice colors $v_{4,j+1}$ with $c$, making $v_{3,j+1}$ safe and $v_{2,j+1}$ sound (with doctors $v_{1,j+1}$ and $v_{3,j+1}$).

\item {\bf Case $2\alpha 5$:} 
If $v_{1,j+2}$ is not colored $c'$, Alice colors $v_{1,j+1}$ with $c'$, making $v_{2,j+1}$ safe and $v_{3,j+1}$ sound (with doctors $v_{2,j+1}$ and $v_{4,j+1}$).

\item {\bf Case $2\alpha 6$:} 
By Property $(3)$ of the induction hypothesis, $v_{3,j+2}$ is not colored $c'$. Then Alice colors $v_{3,j+1}$ with $c'$, making $v_{2,j+1}$ and $v_{3,j+1}$ safe.
\end{itemize}

\item {\bf Case $2\alpha'$ (Non-Free border in Configuration $\alpha$ of a 1-column block):} Bob colors a vertex $v$ of a non-free border of a 1-column block $B$ (column $j$) in configuration $\alpha$. Again, $B$ is considered as a left border. 
Note that, by Property $(3)$, before Bob's move, column $j$ has exactly two colored vertices.
The different possibilities are depicted in Figure~\ref{fig:nextMoveAliceX} (all symmetric cases, according to the horizontal symmetry axis of the grid, can be dealt with accordingly).
If we need to specify which vertex Bob colored, it is depicted in blue. Alice' answer is in red in some subcases. The bold line figures the surrounding of the updated block (before Alice's move). So, in all cases except one subcase of $2\alpha'3$, two blocks are merged into one.

\begin{figure}[ht!]\centering
\begin{tikzpicture}[scale=0.195]
\tikzstyle{vertex}=[draw,circle,fill=black,inner sep=0pt, minimum size=1pt]

\begin{scope}[xshift=0cm, yshift=0cm, scale=2.5]
\foreach \i in {2,3,4}{
\path[draw, thin] (\i,0)--(\i,4);}
\foreach \j in {0,1,2,3,4}{ 
\path[draw, densely dotted] (0,\j)--(5,\j);} 
\foreach \j in {0,1,2,3,4}{ 
\path[draw, thin] (1,\j)--(4,\j);} 
\path[draw, very thick] (0,0)--(2,0)--(2,4)--(0,4);
\path[draw, thin] (1,0)--(1,2);
\path[draw, very thick] (4,0)--(3,0)--(3,4)--(4,4)--(4,0);
\draw (3.5,4.5) node {$j$};
\draw (4.5,0.5) node {$0$};
\draw (4.5,1.5) node {$0$};
\draw (4.5,2.5) node {$0$};
\draw (4.5,3.5) node {$0$};
\draw (3.5,0.5) node {$c$};
\draw (3.5,1.5) node {$0$};
\draw (3.5,2.5) node {$c$};
\draw (3.5,3.5) node {\blue{$c'$}};
\draw (2.5,0.5) node {$0$};
\draw (2.5,1.5) node {$0$};
\draw (2.5,2.5) node {$0$};
\draw (2.5,3.5) node {$0$};
\draw (1.5,0.5) node {};
\draw (1.5,1.5) node {$c$};
\draw (0.8,2.5) node {or $\ne 0$};
\draw (1.25,3.5) node {$\ne c$};
\draw (2.5,-0.75) node {Case $2\alpha' 1$: merge};
\end{scope}

\begin{scope}[xshift=17cm, yshift=0cm, scale=2.5]
\foreach \i in {1,2,3,4}{
\path[draw, thin] (\i,0)--(\i,4);}
\foreach \j in {0,1,2,3,4}{ 
\path[draw, densely dotted] (0,\j)--(5,\j);} 
\foreach \j in {0,1,2,3,4}{ 
\path[draw, thin] (1,\j)--(4,\j);} 
\path[draw, very thick] (0,0)--(2,0)--(2,4)--(0,4);
\path[draw, very thick] (4,0)--(3,0)--(3,4)--(4,4)--(4,0);
\draw (0.2,4.5) node {$\ne 0$};
\draw (3.5,4.5) node {$j$};
\draw (4.5,0.5) node {$0$};
\draw (4.5,1.5) node {$0$};
\draw (4.5,2.5) node {$0$};
\draw (4.5,3.5) node {$0$};
\draw (3.5,0.5) node {$c$};
\draw (3.5,1.5) node {$0$};
\draw (3.5,2.5) node {$c$};
\draw (3.5,3.5) node {\blue{$c'$}};
\draw (2.5,0.5) node {$0$};
\draw (2.5,1.5) node {\red{$c'$}};
\draw (2.5,2.5) node {$0$};
\draw (2.5,3.5) node {$0$};
\draw (1.5,0.5) node {};
\draw (1.5,1.5) node {$c$};
\draw (1.5,2.5) node {$0$};
\draw (1.5,3.5) node {$c$};
\draw (2.5,-0.75) node {Case $2\alpha' 2$: merge};
\end{scope}

\begin{scope}[xshift=34cm, yshift=0cm, scale=2.5]
\foreach \i in {1,2,3,4,5}{
\path[draw, thin] (\i,0)--(\i,4);}
\foreach \j in {0,1,2,3,4}{ 
\path[draw, densely dotted] (0,\j)--(6,\j);} 
\foreach \j in {0,1,2,3,4}{ 
\path[draw, thin] (1,\j)--(5,\j);} 
\path[draw, very thick] (1,0)--(2,0)--(2,4)--(1,4)--(1,0);
\path[draw, very thick] (4,0)--(3,0)--(3,4)--(4,4)--(4,0);
\draw (3.5,4.5) node {$j$};
\draw (0.5,0.5) node {$0$};
\draw (0.5,1.5) node {$0$};
\draw (0.5,2.5) node {$0$};
\draw (0.5,3.5) node {$0$};
\draw (4.5,0.5) node {$0$};
\draw (4.5,1.5) node {$\red{c}$};
\draw (4.5,2.5) node {$0$};
\draw (4.5,3.5) node {$0$};
\draw (5.7,1.5) node {$\ne c$};
\draw (3.5,0.5) node {$c$};
\draw (3.5,1.5) node {$0$};
\draw (3.5,2.5) node {$c$};
\draw (3.5,3.5) node {{\blue{$c'$}}};
\draw (2.5,0.5) node {$0$};
\draw (2.5,1.5) node {$0$};
\draw (2.5,2.5) node {$0$};
\draw (2.5,3.5) node {$0$};
\draw (1.5,0.5) node {$0$};
\draw (1.5,1.5) node {$c$};
\draw (1.5,2.5) node {$0$};
\draw (1.5,3.5) node {$c$};
\draw (2.5,-0.75) node {Case $2\alpha' 3$: $\Lambda+\beta$};
\draw (2.5,-1.5) node {or $\Lambda$+merge};
\end{scope}

\begin{scope}[xshift=53cm, yshift=0cm, scale=2.5]
\foreach \i in {1,2,3,4,5}{
\path[draw, thin] (\i,0)--(\i,4);}
\foreach \j in {0,1,2,3,4}{ 
\path[draw, densely dotted] (0,\j)--(6,\j);} 
\foreach \j in {0,1,2,3,4}{ 
\path[draw, thin] (1,\j)--(5,\j);} 
\path[draw, very thick] (1,0)--(2,0)--(2,4)--(1,4)--(1,0);
\path[draw, very thick] (4,0)--(3,0)--(3,4)--(4,4)--(4,0);
\path[draw, very thick] (6,0)--(5,0)--(5,4)--(6,4);
\draw (3.5,4.5) node {$j$};
\draw (0.5,0.5) node {$0$};
\draw (0.5,1.5) node {$0$};
\draw (0.5,2.5) node {$0$};
\draw (0.5,3.5) node {$0$};
\draw (4.5,0.5) node {$0$};
\draw (4.5,1.5) node {$0$};
\draw (4.5,2.5) node {$0$};
\draw (5.6,2.5) node {\tiny{safe}};
\draw (4.5,3.5) node {$\red{c}$};
\draw (5.7,3.5) node {$\ne c$};
\draw (5.5,2.5) node {};
\draw (5.5,1.5) node {$c$};
\draw (3.5,0.5) node {$c$};
\draw (3.5,1.5) node {$0$};
\draw (3.5,2.5) node {$c$};
\draw (3.5,3.5) node {{\blue{$c'$}}};
\draw (2.5,0.5) node {$0$};
\draw (2.5,1.5) node {$0$};
\draw (2.5,2.5) node {$0$};
\draw (2.5,3.5) node {$0$};
\draw (1.5,0.5) node {$0$};
\draw (1.5,1.5) node {$c$};
\draw (1.5,2.5) node {$0$};
\draw (1.5,3.5) node {$c$};
\draw (2.5,-0.75) node {Case $2\alpha' 4$: $\Lambda'$+merge};
\end{scope}

\begin{scope}[xshift=10cm, yshift=-20cm, scale=2.5]
\foreach \i in {1,2,3,4,5,6}{
\path[draw, thin] (\i,0)--(\i,4);}
\foreach \j in {0,1,2,3,4}{ 
\path[draw, densely dotted] (0,\j)--(7,\j);} 
\foreach \j in {0,1,2,3,4}{
\path[draw, thin] (1,\j)--(5,\j);} 
\path[draw, very thick] (1,0)--(2,0)--(2,4)--(1,4)--(1,0);
\path[draw, very thick] (4,0)--(3,0)--(3,4)--(4,4)--(4,0);
\path[draw, very thick] (7,0)--(5,0)--(5,4)--(7,4);
\draw (6.7,4.5) node {$\ne 0$};
\draw (3.5,4.5) node {$j$};
\draw (0.5,0.5) node {$0$};
\draw (0.5,1.5) node {$0$};
\draw (0.5,2.5) node {$0$};
\draw (0.5,3.5) node {$0$};
\draw (4.5,0.5) node {$0$};
\draw (4.5,1.5) node {$\red{c''}$};
\draw (4.5,2.5) node {$0$};
\draw (4.5,3.5) node {$0$};
\draw (5.5,3.5) node {$c$};
\draw (5.5,2.5) node {$0$};
\draw (5.5,1.5) node {$c$};
\draw (3.5,0.5) node {$c$};
\draw (3.5,1.5) node {$0$};
\draw (3.5,2.5) node {$c$};
\draw (3.5,3.5) node {{\blue{$c'$}}};
\draw (2.5,0.5) node {$0$};
\draw (2.5,1.5) node {$0$};
\draw (2.5,2.5) node {$0$};
\draw (2.5,3.5) node {$0$};
\draw (1.5,0.5) node {$0$};
\draw (1.5,1.5) node {$c$};
\draw (1.5,2.5) node {$0$};
\draw (1.5,3.5) node {$c$};
\draw (3.5,-0.75) node {Case $2\alpha' 5$:};
\draw (3.5,-1.5) node {$\Lambda$+merge};
\end{scope}

\begin{scope}[xshift=35cm, yshift=-20cm, scale=2.5]
\foreach \i in {1,2,3,4,5,6}{
\path[draw, thin] (\i,0)--(\i,4);}
\foreach \j in {0,1,2,3,4}{ 
\path[draw, densely dotted] (0,\j)--(7,\j);} 
\foreach \j in {0,1,2,3,4}{ 
\path[draw, thin] (1,\j)--(6,\j);} 
\path[draw, very thick] (1,0)--(2,0)--(2,4)--(1,4)--(1,0);
\path[draw, very thick] (4,0)--(3,0)--(3,4)--(4,4)--(4,0);
\path[draw, very thick] (6,0)--(5,0)--(5,4)--(6,4)--(6,0);
\draw (3.5,4.5) node {$j$};
\draw (6.5,0.5) node {$0$};
\draw (6.5,1.5) node {$0$};
\draw (6.5,2.5) node {$0$};
\draw (6.5,3.5) node {$0$};
\draw (0.5,0.5) node {$0$};
\draw (0.5,1.5) node {$0$};
\draw (0.5,2.5) node {$0$};
\draw (0.5,3.5) node {$0$};
\draw (4.5,0.5) node {$0$};
\draw (4.5,1.5) node {$\red{c''}$};
\draw (4.5,2.5) node {$0$};
\draw (4.5,3.5) node {$0$};
\draw (5.5,3.5) node {$c$};
\draw (5.5,2.5) node {$0$};
\draw (5.5,1.5) node {$c$};
\draw (5.5,0.5) node {$0$};
\draw (3.5,0.5) node {$c$};
\draw (3.5,1.5) node {$0$};
\draw (3.5,2.5) node {$c$};
\draw (3.5,3.5) node {{\blue{$c'$}}};
\draw (2.5,0.5) node {$0$};
\draw (2.5,1.5) node {$0$};
\draw (2.5,2.5) node {$0$};
\draw (2.5,3.5) node {$0$};
\draw (1.5,0.5) node {$0$};
\draw (1.5,1.5) node {$c$};
\draw (1.5,2.5) node {$0$};
\draw (1.5,3.5) node {$c$};
\draw (3.5,-0.75) node {Case $2\alpha' 6$:};
\draw (3.5,-1.5) node {$\Lambda$+$\Delta$};
\end{scope}

\end{tikzpicture}
\caption{Case $2\alpha'$: Bob colors a vertex of border in configuration $\alpha$ at column $j$ with a color distinct from $c$ of a 1-column block.\label{fig:nextMoveAliceX2}}
\end{figure}
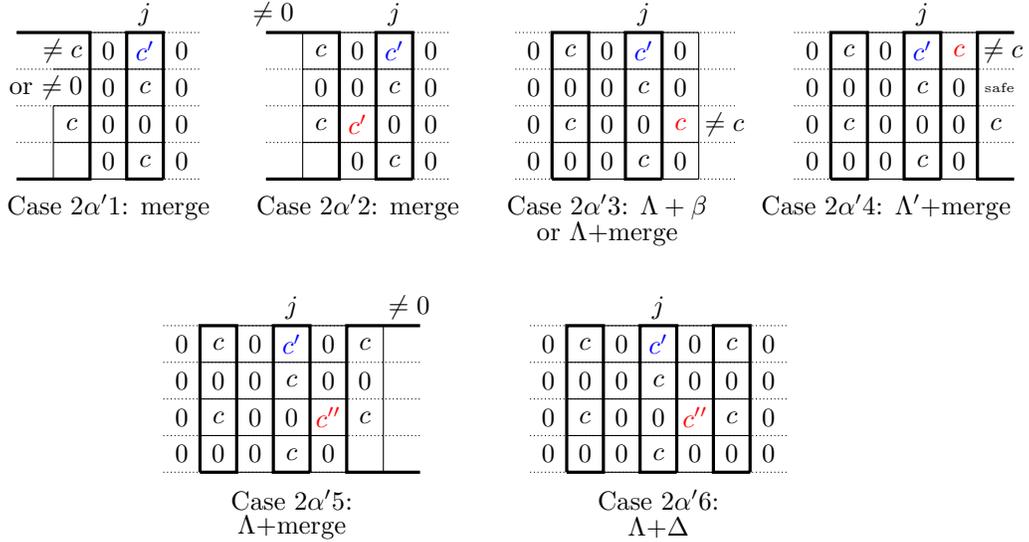

Regardless of what Bob played on column $j$, if $v_{2,j-2}$ is not colored $c$, then Alice can color $v_{2,j-1}$ with $c$, merging the two blocks.
So, in the next cases, we  assume that $v_{2,j-2}$ is colored with color $c$.
Moreover, if Bob colors $v_{2,j}$, Alice can respond similarly as treated in case $2\alpha 3$ (but considering the symmetry according to the vertical axis).
Therefore, we may consider that Bob colors $v_{4,j}$ with $c'$.

\begin{itemize}
\item {\bf Case $2\alpha' 1$:} If $v_{4,j-2}$ is not colored $c$,
Alice colors $v_{4,j-1}$ with $c$ making $v_{3,j-1}$ safe and $v_{2,j-1}$ sound (with doctors $v_{1,j-1}$ and $v_{3,j-1}$). So, assume that $v_{4,j-2}$ is colored $c$. If $v_{3,j-2}$ is colored some color $c''$ (possibly $c'' = c'$), Alice colors $v_{2,j-1}$ with $c'$, making $v_{2,j-1}$ and $v_{3,j-1}$ safe.
Otherwise, $v_{3,j-2}$ is uncolored: the next cases deal with this situation.
\end{itemize}

From now on, suppose that $v_{4,j-2}$ is colored $c$ and $v_{3,j-2}$ is uncolored.
\begin{itemize}
\item {\bf Case $2\alpha' 2$:} If column $j-3$ is not empty, then Alice colors $v_{2,j-1}$ with $c'$, making $v_{2,j-1}$ safe and $v_{3,j-1}$ sound (with doctors $v_{3,j-2}$ and $v_{4,j-1}$).
Note that after Alice's move, Column $j-2$ is not a border anymore, so $v_{3,j-2}$ (that was not a doctor before due to Property $(4)$) can be used as a doctor.
\end{itemize}

So, assume that column $j-3$ is empty and consequently, by Property $(5)$, $v_{1,j-2}$ is not colored.

\begin{itemize}
\item {\bf Case $2\alpha' 3$:} If $v_{2,j+2}$ is not colored $c$, Alice colors $v_{2,j+1}$ with $c$, obtaining a configuration $\Lambda$ on the left and merging the blocks on the right (or obtaining a border $j+1$ in configuration $\beta$ if column $j+2$ was empty) with $v_{2,j+1}$ and $v_{3,j+1}$ safe.
\end{itemize}

So assume that $v_{2,j+2}$ is colored $c$. Note that column $j+2$ cannot be in configuration $\gamma$ and so, $v_{3,j+2}$ must be safe.

\begin{itemize}
\item {\bf Case $2\alpha' 4$:} 
If $v_{4,j+2}$ is not colored $c$, then Alice colors $v_{4,j+1}$ with $c$, making $v_{3,j+1}$ safe and $v_{2,j+1}$ sound (with doctors $v_{1,j+1}$ and $v_{3,j+1}$), merging the blocks on the right and also obtaining a configuration $\Lambda'$ on the left.
\end{itemize}

Finally, assume that $v_{4,j+2}$ is colored $c$.
Since column $j+2$ is a left border in configuration $\alpha$, then $v_{3,j+2}$ is not colored by Property $(5)$.

\begin{itemize}
\item {\bf Case $2\alpha' 5$:}
If column $j+3$ is not empty, then Alice colors $v_{2,j+1}$ with $c''\ne c'$, obtaining a configuration $\Lambda$ on the left and merging the blocks on the right, with $v_{2,j+1}$ safe and $v_{3,j+1}$ sound (with doctors $v_{4,j+1}$ and $v_{3,j+2}$ which was not a doctor before, by Property $(4)$).

\item {\bf Case $2\alpha' 6$:} 
So also assume that column $j+3$ is empty and consequently (by Property $(5)$) $v_{1,j+2}$ is not colored. Then Alice colors $v_{2,j+1}$ with $c''\ne c'$, obtaining a configuration $\Lambda$ on the left and a configuration $\Delta$ on the right.
It is important to notice that while the two configurations $\Delta$ and $\Lambda$ share a column, there is no ambiguity on the response of Alice to further Bob's move in these columns.
If Bob plays on column $j-2$, $j-1$, or $j$, his move is considered as Case $1\Lambda$.
On the other hand, if Bob plays on column $j+1$ or $j+2$, then Alice answers as described in Case $1\Delta$.
\end{itemize}

\end{itemize}

Again, in all subcases of Case $2$, after Alice's move, all properties of the induction hypothesis hold.

\medskip

\item[Case 3.] {\bf When Bob colors a vertex of an empty column (not in column $j-1$ of a configuration $\Lambda,\Lambda_2,\Lambda'$ or $\Lambda'_2$).}~\\

\begin{itemize}
\item  {\bf Case $3$-new (New block):} Bob colors a vertex $v_{a,j}$ with color $c$ such that no vertices in columns $j-1,j,j+1$ are colored (or $j=n$ and column $j-1$ is empty). 
Then, Alice colors $v_{b,j}$ (with $|a-b|=2$) with color $c$. This creates a new block (restricted to column $j$) with border in configuration $\alpha$. Then the induction hypothesis still holds.

\item \noindent {\bf Case $3\pi$.} Bob colors a vertex in the empty column of configuration $\pi$ (see Figure~\ref{fig:configs}).
Notice that, in the Alice's responses below, she always plays in a vertex of the column $j+1$.
If Bob colors $v_{2,j+1}$ (resp. $v_{3,j+1}$), Alice colors $v_{3,j+1}$ (resp. $v_{2,j+1}$), and we are done.
If Bob colors $v_{1,j+1}$ with $c'$ or $c''$, Alice colors $v_{3,j+1}$ with any available color, and we are done (all vertices of column $j+1$ are safe) since $v_{2,j+1}$ is safe.
Thus, suppose that Bob colors $v_{1,j+1}$ with $w \notin \{c,c',c''\}$.
If $v_{3,j+2}$ is not colored $w$, Alice colors $v_{3,j+1}$ with $w$, making $v_{2,j+1}$ and $v_{3,j+1}$ safe.
If $v_{3,j+2}$ is colored $w$, Alice colors $v_{3,j+1}$ with $c''$, making $v_{3,j+1}$ and $v_{2,j+1}$ safe.
Finally, suppose that Bob colors $v_{4,j+1}$.
If the color of $v_{4,j+1}$ is $c'$, then $v_{3,j+1}$ is safe and Alice colors $v_{2,j+1}$ with any available color.
If the color of $v_{4,j+1}$ is $w \notin \{c,c',c''\}$, Alice colors $v_{2,j+1}$ with $w$, making $v_{2,j+1}$ and $v_{3,j+1}$ safe.
Thus assume that Bob colored $v_{4,j+1}$ with $c''$.
If $v_{3,j+2}$ is colored $c''$, then $v_{3,j+1}$ is safe and Alice colors $v_{2,j+1}$ with any available color.
If $v_{3,j+2}$ is colored either $c$ or $w$, Alice colors $v_{2,j+1}$ with the same color, making $v_{2,j+1}$ and $v_{3,j+1}$ safe.
Thus, also assume that $v_{3,j+2}$ is not colored.
Then Alice colors $v_{3,j+1}$ with $w$.
Since $v_{1,j+1}$ cannot receive the color $c$ during the game and the other neighbors of $v_{2,j+1}$ are colored $w$, $c'$ and $c''$, then it will always be possible to color $v_{2,j+1}$ in the game. In other words, $v_{1,j+1}$ is  safe and the induction hypothesis still holds.

\item {\bf Case $3\delta$:} Bob colors in the column adjacent to a border (column $j$) in configuration $\delta$  (Bob plays in column $j+1$ in case of a right border).
Alice will color a vertex of the column $j$ or $j+1$, obtaining a border in configuration $\alpha$, $\beta$, $\gamma$ or $\delta$, or merging two blocks, maintaining the induction hypothesis.
Note that all vertices of column $j$ are safe by induction.

\begin{figure}[ht]\centering
\begin{tikzpicture}[scale=0.195]
\tikzstyle{vertex}=[draw,circle,fill=black,inner sep=0pt, minimum size=1pt]

\begin{scope}[xshift=0cm, yshift=0cm, scale=2.5]
\foreach \i in {2,3,4}{
\path[draw, thin] (\i,0)--(\i,4) ;}
\foreach \j in {0,1,2,3,4}{
\path[draw, densely dotted] (1,\j)--(5,\j);} 
\foreach \j in {0,1,2,3,4}{ 
\path[draw, thin] (2,\j)--(4,\j);} 
\path[draw, very thick]  (1,0)--(3,0)--(3,4)--(1,4) ;
\draw (2.5,4.5) node {$j$};
\draw (1.5,2.5) node {$x$};
\draw (2.5,3.5) node {$x$};
\draw (2.5,2.5) node {$0$};
\draw (2.5,1.5) node {$y$};
\draw (3.5,0.5) node {$\blue{y}$};
\draw (3.5,1.5) node {$0$};
\draw (3.5,3.5) node {$0$};
\draw (3.5,2.5) node {$\red{y}$};
\draw (4.7,2.5) node {$\ne y$};
\draw (2.5,-0.75) node {Case $3\delta 1a$}; 
\draw (2.5,-1.75) node {$\delta \to \alpha/$merge}; 
\end{scope}

\begin{scope}[xshift=15cm, yshift=0cm, scale=2.5]
\foreach \i in {2,3,4}{
\path[draw, thin] (\i,0)--(\i,4) ;}
\foreach \j in {0,1,2,3,4}{
\path[draw, densely dotted] (1,\j)--(5,\j);} 
\foreach \j in {0,1,2,3,4}{ 
\path[draw, thin] (2,\j)--(4,\j);} 
\path[draw, very thick]  (1,0)--(3,0)--(3,4)--(1,4) ;
\path[draw, very thick]  (5,0)--(4,0)--(4,4)--(5,4) ;
\draw (2.5,4.5) node {$j$};
\draw (1.5,2.5) node {$x$};
\draw (2.5,3.5) node {$x$};
\draw (2.5,2.5) node {$0$};
\draw (2.5,1.5) node {$y$};
\draw (3.5,3.5) node {$\red{y}$};
\draw (3.5,2.5) node {$0$};
\draw (3.5,1.5) node {$0$};
\draw (3.5,0.5) node {$\blue{y}$};
\draw (4.5,2.5) node {$y$};
\draw (2.5,-0.75) node {Case $3\delta 1b$}; 
\draw (2.5,-1.75) node {$\delta \to$ merge}; 
\end{scope}

\begin{scope}[xshift=30cm, yshift=0cm, scale=2.5]
\foreach \i in {2,3}{
\path[draw, thin] (\i,0)--(\i,4) ;}
\foreach \j in {0,1,2,3,4}{
\path[draw, densely dotted] (1,\j)--(4.2,\j);} 
\foreach \j in {0,1,2,3,4}{ 
\path[draw, thin] (2,\j)--(3,\j);} 
\path[draw, very thick]  (1,0)--(3,0)--(3,4)--(1,4) ;
\draw (2.5,4.5) node {$j$};
\draw (1.5,2.5) node {$x$};
\draw (2.5,3.5) node {$x$};
\draw (2.5,2.5) node {$0$};
\draw (2.5,1.5) node {$y$};
\draw (3.5,3.5) node {$0$};
\draw (3.7,2.5) node {$\red{c/y}$};
\draw (3.5,1.5) node {$0$};
\draw (3.5,0.5) node {$\blue{c}$};
\draw (2.5,-0.75) node {Case $3\delta 2$}; 
\draw (2.5,-1.75) node {$\delta \to \alpha/\beta/$merge}; 
\end{scope}

\begin{scope}[xshift=43cm, yshift=0cm, scale=2.5]
\foreach \i in {2,3,4,5}{
\path[draw, thin] (\i,0)--(\i,4) ;}
\foreach \j in {0,1,2,3,4}{
\path[draw, densely dotted] (1,\j)--(6,\j);} 
\foreach \j in {0,1,2,3,4}{ 
\path[draw, thin] (2,\j)--(5,\j);} 
\path[draw, very thick]  (1,0)--(3,0)--(3,4)--(1,4) ;
\draw (2.5,4.5) node {$j$};
\draw (1.5,2.5) node {$x$};
\draw (2.5,3.5) node {$x$};
\draw (2.5,2.5) node {$0$};
\draw (2.5,1.5) node {$y$};
\draw (3.5,3.5) node {$0$};
\draw (3.5,2.5) node {$0$};
\draw (3.5,1.5) node {$\blue{c}$};
\draw (3.5,0.5) node {$0$};
\draw (4.5,3.5) node {$0$};
\draw (4.5,2.5) node {$\red{c}$};
\draw (4.5,1.5) node {$0$};
\draw (4.5,0.5) node {$0$};
\draw (5.7,2.5) node {$\ne c$};
\draw (3.5,-0.75) node {Case $3\delta 3$a}; 
\draw (3.5,-1.75) node {$\delta \to \beta/$merge}; 
\end{scope}

\begin{scope}[xshift=60cm, yshift=0cm, scale=2.5]
\foreach \i in {2,3,4,5}{
\path[draw, thin] (\i,0)--(\i,4) ;}
\foreach \j in {0,1,2,3,4}{
\path[draw, densely dotted] (1,\j)--(6,\j);} 
\foreach \j in {0,1,2,3,4}{ 
\path[draw, thin] (2,\j)--(5,\j);} 
\path[draw, very thick]  (1,0)--(3,0)--(3,4)--(1,4) ;
\path[draw, very thick]  (6,0)--(5,0)--(5,4)--(6,4) ;
\draw (2.5,4.5) node {$j$};
\draw (1.5,2.5) node {$x$};
\draw (2.5,3.5) node {$x$};
\draw (2.5,2.5) node {$0$};
\draw (2.5,1.5) node {$y$};
\draw (3.5,3.5) node {$0$};
\draw (3.5,2.5) node {$0$};
\draw (3.5,1.5) node {$\blue{c}$};
\draw (3.5,0.5) node {$0$};
\draw (4.5,3.5) node {$\red{c}$};
\draw (4.5,2.5) node {$0$};
\draw (4.5,1.5) node {$0$};
\draw (4.5,0.5) node {$0$};
\draw (5.5,2.5) node {$c$};
\draw (3.5,-0.75) node {Case $3\delta 3$b}; 
\draw (3.5,-1.75) node {$\delta \to$  merge}; 
\end{scope}

\begin{scope}[xshift=0cm, yshift=-20cm, scale=2.5]
\foreach \i in {2,3,4}{
\path[draw, thin] (\i,0)--(\i,4) ;}
\foreach \j in {0,1,2,3,4}{
\path[draw, densely dotted] (1,\j)--(5,\j);} 
\foreach \j in {0,1,2,3,4}{ 
\path[draw, thin] (2,\j)--(4,\j);} 
\path[draw, very thick]  (1,0)--(3,0)--(3,4)--(1,4) ;
\draw (2.5,4.5) node {$j$};
\draw (1.5,2.5) node {$x$};
\draw (2.5,3.5) node {$x$};
\draw (2.5,2.5) node {$0$};
\draw (2.5,1.5) node {$y$};
\draw (3.5,3.5) node {$0$};
\draw (3.5,2.5) node {$\blue{c}$};
\draw (3.5,1.5) node {$0$};
\draw (3.5,0.5) node {$\red{y}$};
\draw (4.5,3.5) node {$0$};
\draw (4.5,2.5) node {$0$};
\draw (4.5,1.5) node {$0$};
\draw (4.5,0.5) node {$0$};
\draw (3,-0.75) node {Case $3\delta 4$};
\draw (3,-1.75) node {$\delta \to \delta/\alpha$}; 
\end{scope}

\begin{scope}[xshift=15cm, yshift=-20cm, scale=2.5]
\foreach \i in {2,3,4}{
\path[draw, thin] (\i,0)--(\i,4) ;}
\foreach \j in {0,1,2,3,4}{
\path[draw, densely dotted] (1,\j)--(5,\j);} 
\foreach \j in {0,1,2,3,4}{ 
\path[draw, thin] (2,\j)--(4,\j);} 
\path[draw, very thick]  (1,0)--(3,0)--(3,4)--(1,4) ;
\draw (2.5,4.5) node {$j$};
\draw (1.5,2.5) node {$x$};
\draw (2.5,3.5) node {$x$};
\draw (2.5,2.5) node {$0$};
\draw (2.5,1.5) node {$y$};
\draw (3.5,3.5) node {$\blue{c}$};
\draw (3.5,2.5) node {$\red{y}$};
\draw (3.5,1.5) node {$0$};
\draw (3.5,0.5) node {$0$};
\draw (4.7,2.5) node {$\ne y$};
\draw (3,-0.75) node {Case $3\delta 5a$}; 
\draw (3,-1.75) node {$\delta \to \beta/$merge}; 
\end{scope}

\begin{scope}[xshift=30cm, yshift=-20cm, scale=2.5]
\foreach \i in {2,3,4}{
\path[draw, thin] (\i,0)--(\i,4) ;}
\foreach \j in {0,1,2,3,4}{
\path[draw, densely dotted] (1,\j)--(5,\j);} 
\foreach \j in {0,1,2,3,4}{ 
\path[draw, thin] (2,\j)--(4,\j);} 
\path[draw, very thick]  (1,0)--(3,0)--(3,4)--(1,4) ;
\path[draw, very thick]  (5,0)--(4,0)--(4,4)--(5,4) ;
\draw (2.5,4.5) node {$j$};
\draw (1.5,2.5) node {$x$};
\draw (2.5,3.5) node {$x$};
\draw (2.5,2.5) node {$\red{c}$};
\draw (2.5,1.5) node {$y$};
\draw (3.5,3.5) node {$\blue{c}$};
\draw (3.5,2.5) node {$0$};
\draw (3.5,1.5) node {$0$};
\draw (3.5,0.5) node {$0$};
\draw (4.5,2.5) node {$y$};
\draw (3,-0.75) node {Case $3\delta 5b$}; 
\draw (3,-1.75) node {$\delta \to$  merge};  
\end{scope}

\begin{scope}[xshift=45cm, yshift=-20cm, scale=2.5]
\foreach \i in {2,3,4}{
\path[draw, thin] (\i,0)--(\i,4) ;}
\foreach \j in {0,1,2,3,4}{
\path[draw, densely dotted] (1,\j)--(5,\j);} 
\foreach \j in {0,1,2,3,4}{ 
\path[draw, thin] (2,\j)--(4,\j);} 
\path[draw, very thick]  (1,0)--(3,0)--(3,4)--(1,4) ;
\draw (2.5,4.5) node {$j$};
\draw (1.5,2.5) node {$x$};
\draw (2.5,3.5) node {$x$};
\draw (2.5,2.5) node {$0$};
\draw (2.5,1.5) node {$y$};
\draw (3.5,3.5) node {$\blue{y}$};
\draw (3.5,2.5) node {$0$};
\draw (3.5,1.5) node {$0$};
\draw (3.5,0.5) node {$\red{y}$};
\draw (4.7,0.5) node {$\ne y$};
\draw (3.0,-0.75) node {Case $3\delta 5c$}; 
\draw (3.0,-1.75) node {$\delta \to \gamma/$merge};
\end{scope}

\begin{scope}[xshift=61cm, yshift=-20cm, scale=2.5]
\foreach \i in {2,3,4}{
\path[draw, thin] (\i,0)--(\i,4) ;}
\foreach \j in {0,1,2,3,4}{
\path[draw, densely dotted] (1,\j)--(5,\j);} 
\foreach \j in {0,1,2,3,4}{ 
\path[draw, thin] (2,\j)--(4,\j);} 
\path[draw, very thick]  (1,0)--(3,0)--(3,4)--(1,4) ;
\path[draw, very thick]  (5,0)--(4,0)--(4,4)--(5,4) ;
\draw (2.5,4.5) node {$j$};
\draw (1.5,2.5) node {$x$};
\draw (2.5,3.5) node {$x$};
\draw (2.5,2.5) node {$0$};
\draw (2.5,1.5) node {$y$};
\draw (3.5,3.5) node {$\blue{y}$};
\draw (3.5,2.5) node {$0$};
\draw (3.5,1.5) node {$0$};
\draw (3.5,0.5) node {$0$};
\draw (4.5,2.5) node {$0$};
\draw (4.5,1.5) node {$0$};
\draw (4.5,0.5) node {$y$};
\draw (3.0,-.75) node {Case $3\delta 5d$:}; 
\draw (3.0,-1.5) node {not possible}; 

\end{scope}

\end{tikzpicture}

\caption{Case $3\delta$: Bob plays in the empty column adjacent to a border in configuration $\delta$, where $x,y$ are distinct colors. The move of Bob is depicted in blue and the answer of Alice is in red.}
\label{fig-s-rudini}
\end{figure}
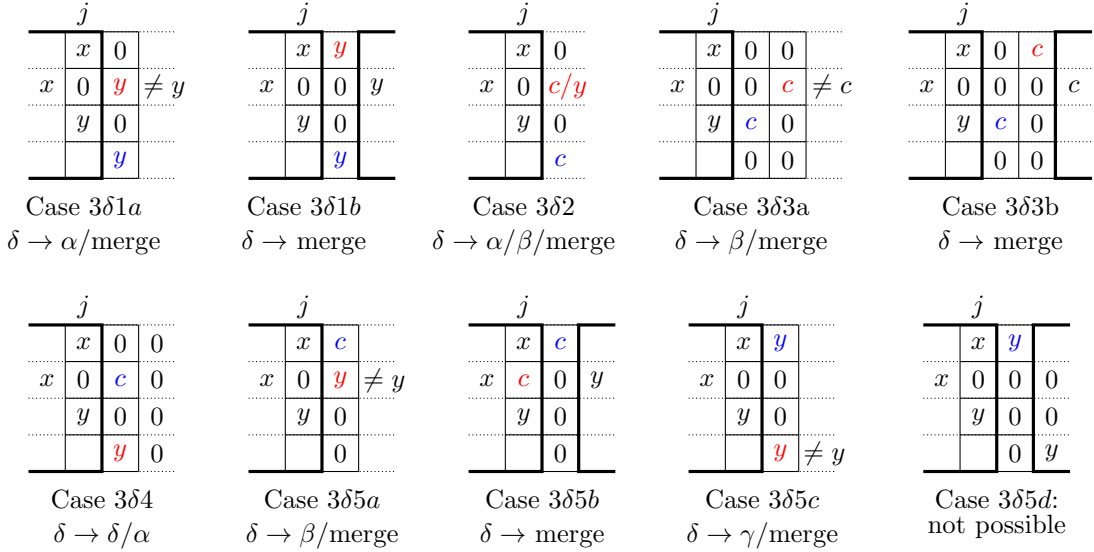

\begin{itemize}
\item {\bf Case $3\delta 1$:} 
Suppose that Bob colored $v_{1,j+1}$ with color $y$. If $v_{3,j+2}$ is not colored $y$ (Case $3\delta 1a$), she colors $v_{3,j+1}$ with $y$ obtaining a border in configuration $\alpha$ or merging two blocks. Otherwise (Case $3\delta 1b$), she colors $v_{4,j+1}$ with $y$, merging two blocks, making $v_{2,j+1}$ and $v_{3,j+1}$ safe.
\item {\bf Case $3\delta 2$:} 
Suppose that Bob colored $v_{1,j+1}$ with color $c\ne y$. Then Alice can color $v_{3,j+1}$ with $c$ or $y$, obtaining a border in configuration $\alpha$ or $\beta$, or merging two blocks.
\item {\bf Case $3\delta 3$:} 
Suppose that Bob colored $v_{2,j+1}$ with $c\ne y$. If column $j+2$ is not empty, Alice colors $v_{3,j+1}$ with any available color, merging the two blocks. So, assume that column $j+2$ is empty. If $v_{3,j+3}$ is not colored $c$ (Case $3\delta 3$a), then Alice colors $v_{3,j+2}$ with color $c$, obtaining a border in configuration $\beta$ or merging two blocks. If $v_{3,j+3}$ is colored $c$ (Case $3\delta 3$b), then she colors $v_{4,j+2}$ with color $c$, making $v_{3,j+2}$ safe, $v_{2,j+2}$ sound (with doctors $v_{3,j+2}$ and $v_{1,j+2}$) and $v_{3,j+1}$ sound (with doctors $v_{3,j}$ and $v_{4,j+1}$).
\item {\bf Case $3\delta 4$:} 
Suppose that Bob colored $v_{3,j+1}$ with $c$. If column $j+2$ is not empty, Alice colors $v_{2,j+1}$ with any available color, merging two blocks. So, assume that column $j+2$ is empty. Then Alice colors $v_{1,j+1}$ with $y$, obtaining a border in configuration $\alpha$ (if $c=y$) or in configuration $\delta$ (if $c\ne y$).
\item {\bf Case $3\delta 5$:} 
Suppose that Bob colored $v_{4,j+1}$ with $c\ne x$.
If $c\ne y$ and $v_{3,j+2}$ is not colored $y$ (Case $3\delta 5a$), Alice colors $v_{3,j+1}$ with color $y$ obtaining a border in configuration $\beta$ or merging two blocks.
If $c\ne y$ and $v_{3,j+2}$ is colored $y$ (Case $3\delta 5b$), Alice colors $v_{3,j}$ with $c$, merging two blocks, making $v_{3,j+1}$ safe and $v_{2,j+1}$ sound (with doctors $v_{3,j+1}$ and $v_{1,j+1}$).
So, suppose that $c=y$.
If $v_{1,j+2}$ is not colored $y$ (Case $3\delta 5c$), Alice colors $v_{1,j+1}$ with $y$, obtaining a border in configuration $\gamma$ or merging two blocks (with $v_{3,j+1}$ sound). So, assume that $c=y$ and $v_{1,j+2}$ is colored $y$. If $v_{2,j+2}$ is colored $c'\neq 0$, Alice colors $v_{3,j+1}$ with $c'$, merging two blocks, making $v_{2,j+1}$ and $v_{3,j+1}$ safe. So also assume that $v_{2,j+2}$ is not colored. If $v_{3,j+2}$ is colored $c'$, Alice colors $v_{2,j+1}$ with $c'$ if $c'\ne y$ or any other color otherwise, making $v_{2,j+1}$ and $v_{3,j+1}$ safe. So also assume that $v_{3,j+2}$ is not colored (Case $3\delta 5d$). Then column $j+2$ must be a border in configuration $\gamma$ and $v_{4,j+2}$ is colored $y$, a contradiction since Bob just colored $v_{4,j+1}$ with $y$.
\end{itemize}

\end{itemize}

From now on, we assume that Bob just played in an empty column adjacent to some border (column $j$) but not adjacent to a border in configuration $\pi$ or $\delta$. The next cases first assume that border at column $j$ is free.

\begin{itemize}
\item {\bf Case $3\alpha F$. Empty column adjacent to a free configuration $\alpha$.}  The next case occurs if Bob colors a vertex in an empty column (column $j+1$ in the illustrations) that is adjacent to a free border in configuration $\alpha$. The different subcases are described in Figure~\ref{fig:nextMoveAlice2}.

\begin{figure}[ht]\centering
\begin{tikzpicture}[scale=0.195]
\tikzstyle{vertex}=[draw,circle,fill=black,inner sep=0pt, minimum size=1pt]

\begin{scope}[xshift=0cm, yshift=0cm, scale=2.5]
\fill[blue!20] (2,0) rectangle (3,1);
\fill[blue!20] (2,3) rectangle (3,4);
\foreach \i in {1,2,3,4}{
\path[draw, thin] (\i,0)--(\i,4);}
\foreach \j in {0,1,2,3,4}{ 
\path[draw, densely dotted](0,\j)--(5,\j);} 
\foreach \j in {0,1,2,3,4}{ 
\path[draw, thin] (1,\j)--(2,\j)--(3,\j)--(4,\j);} 
\path[draw, very thick]  (0,0)--(2,0)--(2,4)--(0,4);
\draw (1.5,4.5) node {$j$};
\draw (1.5,0.5) node {$c$};
\draw (1.5,2.5) node {$c$};
\draw (3.5,0.5) node {$0$};
\draw (3.5,1.5) node {$0$};
\draw (3.5,2.5) node {$0$};
\draw (3.5,3.5) node {$0$};
\draw (2.5,1.5) node {$\red{c}$};
\draw (2.5,2.5) node {$0$};
\draw (2.5,-.75) node {Case $3\alpha F1$:};
\draw (2.5,-1.5) node {$\alpha\to\beta$};
\end{scope}

\begin{scope}[xshift=20cm, yshift=0cm, scale=2.5]
\foreach \i in {1,2,3,4}{
\path[draw, thin] (\i,0)--(\i,4);}
\foreach \j in {0,1,2,3,4}{
\path[draw, densely dotted](0,\j)--(5,\j);} 
\foreach \j in {0,1,2,3,4}{ 
\path[draw, thin] (1,\j)--(2,\j)--(3,\j)--(4,\j);} 
\path[draw, very thick] (0,0)--(2,0)--(2,4)--(0,4);
\draw (1.5,4.5) node {$j$};
\draw (1.5,0.5) node {$c$};
\draw (1.5,2.5) node {$c$};
\draw (3.5,0.5) node {$0$};
\draw (3.5,1.5) node {$0$};
\draw (3.5,2.5) node {$0$};
\draw (3.5,3.5) node {$0$};
\draw (2.5,0.5) node {$\red{c'}$};
\draw (2.5,1.5) node {$0$};
\draw (2.5,2.5) node {\blue{$c'$}};
\draw (2.5,3.5) node {$0$};
\draw (2.5,-.75) node {Case $3\alpha F2$:}; 
\draw (2.5,-1.5) node {$\alpha\to\alpha$}; 
\end{scope}

\begin{scope}[xshift=40cm, yshift=0cm, scale=2.5]
\foreach \i in {1,2,3,4}{
\path[draw, thin] (\i,0)--(\i,4);}
\foreach \j in {0,1,2,3,4}{
\path[draw, densely dotted](0,\j)--(5,\j);} 
\foreach \j in {0,1,2,3,4}{ 
\path[draw, thin] (1,\j)--(2,\j)--(3,\j)--(4,\j);} 
\path[draw, very thick] (0,0)--(2,0)--(2,4)--(0,4);
\draw (1.5,4.5) node {$j$};
\draw (1.5,0.5) node {$c$};
\draw (1.5,2.5) node {$c$};
\draw (2.5,0.5) node {$0$};
\draw (2.5,1.5) node {$\blue{c'}$};
\draw (2.5,2.5) node {$0$};
\draw (2.5,3.5) node {$\red{c}$};
\draw (3.5,0.5) node {$0$};
\draw (3.5,1.5) node {$0$};
\draw (3.5,2.5) node {$0$};
\draw (3.5,3.5) node {$0$};
\draw (2.5,-.75) node {Case $3\alpha F3$:}; 
\draw (2.5,-1.5) node {$\alpha\to\alpha/\delta$}; 
\end{scope}

\end{tikzpicture}
\caption{Case $3\alpha F$: Bob colors a vertex in an empty column adjacent to a free border in configuration $\alpha$. The blue cell is Bob's move and the red one is Alice's answer. The bold lines figure the surrounding of the block(s) before Alice's move.}
\label{fig:nextMoveAlice2}
\end{figure}
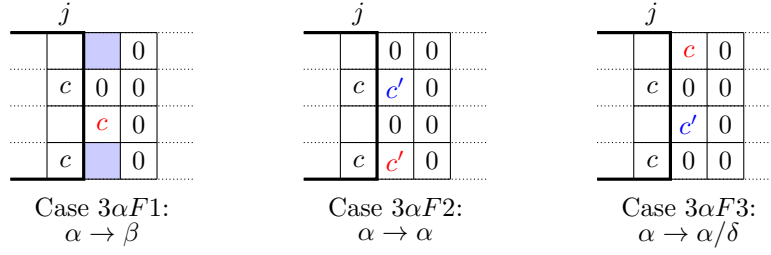

\begin{itemize}
\item {\bf Case $3\alpha F 1$:} If Bob plays on vertex $v_{1,j+1}$ or $v_{4,j+1}$, then Alice colors $v_{2,j+1}$ with color $c$, obtaining a new border in configuration $\beta$.
\item {\bf Case $3\alpha F 2$:} If Bob colors $v_{3,j+1}$ with $c'$ (note that $c'\neq c$),  Alice plays $c'$ on $v_{1,j+1}$, obtaining a new border in configuration $\alpha$. Notice that $v_{2,j+1}$ is not a doctor, satisfying Property $(4)$.
\item {\bf Case $3\alpha F 3$:} 
If Bob plays $c'$ on $v_{2,j+1}$, Alice colors $v_{4,j+1}$ with $c$ obtaining a border in configuration $\alpha$ (if $c'=c$) or in configuration $\delta$ (if $c'\ne c$).
\end{itemize}

\item {\bf Case $3\beta F$. Empty column adjacent to a free configuration $\beta$.}  The next case occurs if Bob colors a vertex in an empty column adjacent to a free border in configuration $\beta$. The different subcases are described in Figure~\ref{fig:nextMoveAlice3}.

\begin{figure}[ht]\centering
\begin{tikzpicture}[scale=0.195]
\tikzstyle{vertex}=[draw,circle,fill=black,inner sep=0pt, minimum size=1pt]

\begin{scope}[xshift=0cm, yshift=0cm, scale=2.5]
\foreach \i in {1,2,3}{
\path[draw, thin] (\i,0)--(\i,4);}
\foreach \j in {0,1,2,3,4}{ 
\path[draw, densely dotted] (0,\j)--(4,\j);} 
\foreach \j in {0,1,2,3,4}{ 
\path[draw, thin] (1,\j)--(3,\j);} 
\path[draw, very thick] (0,0)--(2,0)--(2,4)--(0,4);
\draw (1.5,4.5) node {$j$};
\draw (1.5,1.5) node {$0$};
\draw (0.5,1.5) node {$c$};
\draw (1.5,2.5) node {$c$};
\draw (3.5,0.5) node {$0$};
\draw (3.5,1.5) node {$0$};
\draw (3.5,2.5) node {$0$};
\draw (3.5,3.5) node {$0$};
\draw (2.5,0.5) node {\blue{$c'$}};
\draw (2.5,1.5) node {$0$};
\draw (2.5,2.5) node {\red{$c'$}};
\draw (2.5,3.5) node {$0$};
\draw (2.5,-.75) node {Case $3\beta F1$:}; 
\draw (2.5,-1.5) node {$\beta\to\alpha$}; 
\end{scope}

\begin{scope}[xshift=17cm, yshift=0cm, scale=2.5]
\foreach \i in {1,2,3}{
\path[draw, thin] (\i,0)--(\i,4);}
\foreach \j in {0,1,2,3,4}{ 
\path[draw, densely dotted] (0,\j)--(4,\j);} 
\foreach \j in {0,1,2,3,4}{ 
\path[draw, thin] (1,\j)--(3,\j);} 
\path[draw, very thick] (0,0)--(2,0)--(2,4)--(0,4);
\draw (1.5,4.5) node {$j$};
\draw (1.5,1.5) node {$0$};
\draw (0.5,1.5) node {$c$};
\draw (1.5,2.5) node {$c$};
\draw (3.5,0.5) node {$0$};
\draw (3.5,1.5) node {$0$};
\draw (3.5,2.5) node {$0$};
\draw (3.5,3.5) node {$0$};
\draw (2.5,0.5) node {\blue{$c$}};
\draw (2.5,1.5) node {$0$};
\draw (2.5,2.5) node {$0$};
\draw (2.5,3.5) node {\red{$c$}};
\draw (2.5,-.75) node {Case $3\beta F2$:}; 
\draw (2.5,-1.5) node {$\beta\to\gamma$}; 
\end{scope}

\begin{scope}[xshift=34cm, yshift=0cm, scale=2.5]
\foreach \i in {1,2,3}{
\path[draw, thin] (\i,0)--(\i,4);}
\foreach \j in {0,1,2,3,4}{ 
\path[draw, densely dotted] (0,\j)--(4,\j);} 
\foreach \j in {0,1,2,3,4}{ 
\path[draw, thin] (1,\j)--(3,\j);} 
\path[draw, very thick] (0,0)--(2,0)--(2,4)--(0,4);
\draw (1.5,4.5) node {$j$};
\draw (1.5,1.5) node {$0$};
\draw (0.5,1.5) node {$c$};
\draw (1.5,2.5) node {$c$};
\draw (3.5,0.5) node {$0$};
\draw (3.5,1.5) node {$0$};
\draw (3.5,2.5) node {$0$};
\draw (3.5,3.5) node {$0$};
\draw (2.5,0.5) node {$0$};
\draw (2.5,1.5) node {\blue{$c'$}};
\draw (2.5,2.5) node {$0$};
\draw (2.5,3.5) node {\red{$c$}};
\draw (2.5,-.75) node {Case $3\beta F3$:}; 
\draw (2.5,-1.5) node {$\beta\to\alpha$ or $\delta$}; 
\end{scope}

\begin{scope}[xshift=53cm, yshift=0cm, scale=2.5]
\foreach \i in {1,2,3}{
\path[draw, thin] (\i,0)--(\i,4);}
\foreach \j in {0,1,2,3,4}{ 
\path[draw, densely dotted] (0,\j)--(4,\j);} 
\foreach \j in {0,1,2,3,4}{ 
\path[draw, thin] (1,\j)--(3,\j);} 
\path[draw, very thick] (0,0)--(2,0)--(2,4)--(0,4);
\draw (1.5,4.5) node {$j$};
\draw (1.5,1.5) node {$0$};
\draw (0.5,1.5) node {$c$};
\draw (1.5,2.5) node {$c$};
\draw (3.5,0.5) node {$0$};
\draw (3.5,1.5) node {$0$};
\draw (3.5,2.5) node {$0$};
\draw (3.5,3.5) node {$0$};
\draw (2.5,0.5) node {$0$};
\draw (2.5,1.5) node {\red{$c$}};
\draw (2.5,2.5) node {$0$};
\draw (2.5,3.5) node {\blue{$c'$}};
\draw (2.5,-.75) node {Case $3\beta F4$:}; 
\draw (2.5,-1.5) node {$\beta\to\alpha$ or $\beta$}; 
\end{scope}

\begin{scope}[xshift=0cm, yshift=-20cm, scale=2.5]
\foreach \i in {2,3}{
\path[draw, thin] (\i,0)--(\i,4);}
\path[draw, densely dotted]  (1,0)--(1,4);
\foreach \j in {0,1,2,3,4}{ 
\path[draw, densely dotted] (0,\j)--(4,\j);} 
\foreach \j in {0,1,2,3,4}{ 
\path[draw, thin] (1,\j)--(3,\j);} 
\path[draw, very thick] (0,0)--(2,0)--(2,4)--(0,4);
\draw (1.5,4.5) node {$j$};
\draw (1.5,1.5) node {$0$};
\draw (1.25,0.5) node {$\neq c'$};
\draw (0.5,1.5) node {$c$};
\draw (1.5,2.5) node {$c$};
\draw (3.5,0.5) node {$0$};
\draw (3.5,1.5) node {$0$};
\draw (3.5,2.5) node {$0$};
\draw (3.5,3.5) node {$0$};
\draw (2.5,0.5) node {\red{$c'$}};
\draw (2.5,1.5) node {$0$};
\draw (2.5,2.5) node {\blue{$c'$}};
\draw (2.5,3.5) node {$0$};
\draw (2.5,-.75) node {Case $3\beta F5$:};
\draw (2.5,-1.5) node {$\beta\to\alpha$};
\end{scope}

\begin{scope}[xshift=18cm, yshift=-20cm, scale=2.5]
\foreach \i in {1,2,3,4}{
\path[draw, thin] (\i,0)--(\i,4);}
\foreach \j in {0,1,2,3,4}{ 
\path[draw, densely dotted] (0,\j)--(5,\j);} 
\foreach \j in {0,1,2,3,4}{ 
\path[draw, thin] (1,\j)--(4,\j);} 
\path[draw, very thick] (0,0)--(2,0)--(2,4)--(0,4);
\draw (1.5,4.5) node {$j$};
\draw (1.5,1.5) node {$0$};
\draw (1.5,0.5) node {$c'$};
\draw (0.5,1.5) node {$c$};
\draw (1.5,2.5) node {$c$};
\draw (3.5,0.5) node {$0$};
\draw (3.5,1.5) node {\red{$c'$}};
\draw (3.5,2.5) node {$0$};
\draw (3.5,3.5) node {$0$};
\draw (2.5,0.5) node {$0$};
\draw (2.5,1.5) node {$0$};
\draw (2.5,2.5) node {\blue{$c'$}};
\draw (4.75,1.5) node {$\neq c'$};
\draw (2.5,3.5) node {$0$};
\draw (2.5,-.75) node {Case $3\beta F6$:}; 
\draw (2.5,-1.5) node {$\beta\to\beta$ or merge};
\end{scope}

\begin{scope}[xshift=36cm, yshift=-20cm, scale=2.5]
\foreach \i in {1,2,3,4}{
\path[draw, thin] (\i,0)--(\i,4);}
\foreach \j in {0,1,2,3,4}{ 
\path[draw, densely dotted] (0,\j)--(5,\j);} 
\foreach \j in {0,1,2,3,4}{ 
\path[draw, thin] (1,\j)--(4,\j);} 
\path[draw, very thick] (0,0)--(2,0)--(2,4)--(0,4);
\path[draw, very thick]  (5,0) --(4,0)--(4,1) --(4,2)--(4,3)--(4,4)--(5,4) ;
\draw (1.5,4.5) node {$j$};
\draw (1.5,1.5) node {$0$};
\draw (1.5,0.5) node {$c'$};
\draw (0.5,1.5) node {$c$};
\draw (1.5,2.5) node {$c$};
\draw (3.5,0.5) node {\red{$c'$}};
\draw (3.5,1.5) node {$0$};
\draw (3.5,2.5) node {$0$};
\draw (3.5,3.5) node {$0$};
\draw (2.5,0.5) node {$0$};
\draw (2.5,1.5) node {$0$};
\draw (2.5,2.5) node {\blue{$c'$}};
\draw (4.5,1.5) node {$c'$};
\draw (2.5,3.5) node {$0$};
\draw (2.5,-.75) node {Case $3\beta F7$};
\draw (2.5,-1.5) node {$\beta\to$ merge};
\end{scope}

\end{tikzpicture}
\caption{Case $3\beta F$: Bob colors a vertex in an empty column adjacent to a free border in configuration $\beta$. The blue cell is Bob's move and the red one is Alice's answer. The bold lines figure the surrounding of the block(s) before Alice's move.}
\label{fig:nextMoveAlice3}
\end{figure}
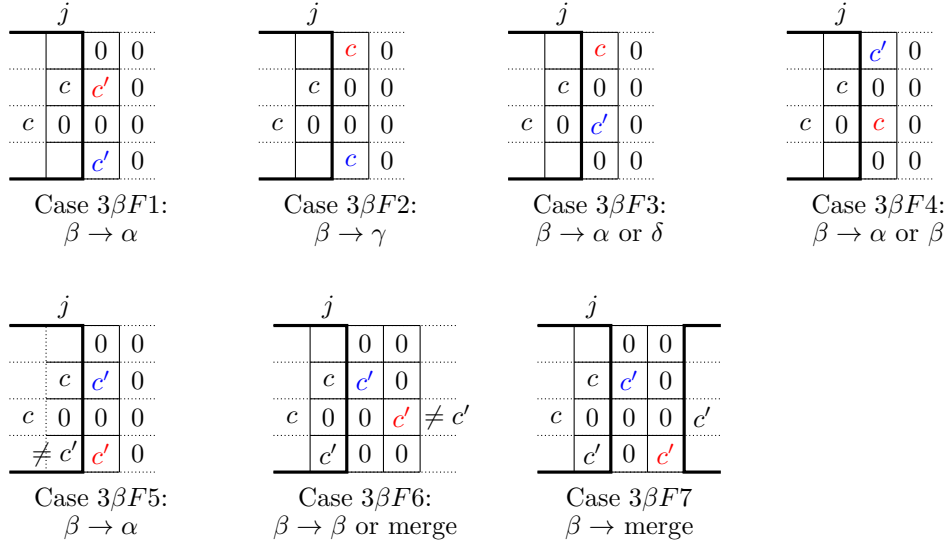

First assume that Bob colored $v_{1,j+1}$ with $c'$.
\begin{itemize}
\item {\bf Case $3\beta F 1$:} If $c'\ne c$, Alice plays $c'$ on $v_{3,j+1}$, obtaining a border in configuration $\alpha$ (notice that $v_{2,j+1}$ is not a doctor).
\item {\bf Case $3\beta F 2$:} If $c'=c$, Alice colors $v_{4,j+1}$ with $c$, obtaining a border in configuration $\gamma$.
\end{itemize}

Now assume that Bob colored $v_{2,j+1}$ with $c'$, which may be equal to $c$.

\begin{itemize}
\item {\bf Case $3\beta F 3$:} Alice colors $v_{4,j+1}$ with $c$, obtaining a border in configuration $\alpha$ (if $c'=c$) or in configuration $\delta$ (if $c'\ne c$).
\end{itemize}

Now assume that Bob colored $v_{4,j+1}$ with $c'$, which may be equal to $c$.

\begin{itemize}
\item {\bf Case $3\beta F 4$:} Alice plays $c$ on $v_{2,j+1}$, obtaining a border in configuration $\alpha$ (if $c'=c$) or $\beta$ (if $c'\ne c$).
\end{itemize}

Finally assume that Bob colored $v_{3,j+1}$ with $c'\ne c$.

\begin{itemize}
\item {\bf Case $3\beta F 5$:} If $v_{1,j}$ is not colored $c'$, Alice plays $c'$ on $v_{1,j+1}$, obtaining a border in configuration $\alpha$ (notice that $v_{2,j+1}$ is not a doctor).
\item {\bf Case $3\beta F 6$:} So, suppose that $v_{1,j}$ is colored $c'$. If $v_{2,j+3}$ is not colored $c'$, Alice colors $v_{2,j+2}$ with $c'$, obtaining a border in configuration $\beta$ or merging two blocks.
\item {\bf Case $3\beta F 7$:} So, also suppose that $v_{2,j+3}$ is colored $c'$. Then Alice colors $v_{1,j+2}$ with $c'$, merging two blocks and making $v_{2,j+1}$ sound (with doctors $v_{2,j}$ and $v_{1,j+1}$) and $v_{3,j+2}$ sound (with doctors $v_{2,j+2}$ and $v_{4,j+2}$).
\end{itemize}

\item {\bf Case $3\gamma F$. Empty column adjacent to a free configuration $\gamma$.}  The next case occurs if Bob colors a vertex in an empty column adjacent to a free border in configuration $\gamma$. The different subcases are described in Figure~\ref{fig-6gamma}. The move of Bob is in blue and the answer of Alice is in red.

\begin{figure}[ht!]\centering
\begin{tikzpicture}[scale=0.195]
\tikzstyle{vertex}=[draw,circle,fill=black,inner sep=0pt, minimum size=1pt]

\begin{scope}[xshift=0cm, yshift=0cm, scale=2.5]
\foreach \i in {3,4,5}{
\path[draw, thin] (\i,0)--(\i,4);}
\foreach \j in {0,1,2,3,4}{
\path[draw, densely dotted] (2,\j)--(6,\j);} 
\foreach \j in {0,1,2,3,4}{ 
\path[draw, thin] (3,\j)--(5,\j);}
\path[draw, very thick]  (2,0)--(4,0)--(4,4)--(2,4);
\draw (3.5,4.5) node {$j$};
\draw (2.5,1.65) node {$0$};
\draw (2.5,1.25) node {\tiny{safe}};
\draw (2.5,2.5) node {$c$};
\draw (3.5,0.5) node {$c$};
\draw (3.5,1.5) node {$0$};
\draw (3.5,2.5) node {$0$};
\draw (3.5,3.5) node {$c$};
\draw (4.5,3.5) node {$\blue{c'}$};
\draw (4.5,2.5) node {$0$};
\draw (4.5,1.5) node {$\red{c'}$};
\draw (4.5,0.5) node {$0$};
\draw (5.5,3.5) node {$0$};
\draw (5.5,2.5) node {$0$};
\draw (5.5,1.5) node {$0$};
\draw (5.5,0.5) node {$0$};
\draw (4.0,-.75) node {Case $3\gamma F1$:}; 
\draw (4.0,-1.5) node {$\gamma \to \alpha$}; 
\end{scope}

\begin{scope}[xshift=17cm, yshift=0cm, scale=2.5]
\foreach \i in {3,4,5}{
\path[draw, thin] (\i,0)--(\i,4);}
\foreach \j in {0,1,2,3,4}{
\path[draw, densely dotted] (2,\j)--(6,\j);} 
\foreach \j in {0,1,2,3,4}{ 
\path[draw, thin] (3,\j)--(5,\j);}
\path[draw, very thick]  (2,0)--(4,0)--(4,4)--(2,4);
\draw (3.5,4.5) node {$j$};
\draw (2.5,1.65) node {$0$};
\draw (2.5,1.25) node {\tiny{safe}};
\draw (2.5,2.5) node {$c$};
\draw (3.5,0.5) node {$c$};
\draw (3.5,1.5) node {$0$};
\draw (3.5,2.5) node {$0$};
\draw (3.5,3.5) node {$c$};
\draw (4.5,3.5) node {$0$};
\draw (4.5,2.5) node {$\red{c'}$};
\draw (4.5,1.5) node {$0$};
\draw (4.5,0.5) node {$\blue{c'}$};
\draw (5.5,3.5) node {$0$};
\draw (5.5,2.5) node {$0$};
\draw (5.5,1.5) node {$0$};
\draw (5.5,0.5) node {$0$};
\draw (4.0,-.75) node {Case $3\gamma F2$:}; 
\draw (4.0,-1.5) node {$\gamma \to \alpha$}; 
\end{scope}

\begin{scope}[xshift=34cm, yshift=0cm, scale=2.5]
\foreach \i in {3,4,5}{
\path[draw, thin] (\i,0)--(\i,4);}
\foreach \j in {0,1,2,3,4}{
\path[draw, densely dotted] (2,\j)--(6,\j);} 
\foreach \j in {0,1,2,3,4}{ 
\path[draw, thin] (3,\j)--(5,\j);}
\path[draw, very thick]  (2,0)--(4,0)--(4,4)--(2,4);
\draw (3.5,4.5) node {$j$};
\draw (2.5,1.65) node {$0$};
\draw (2.5,1.25) node {\tiny{safe}};
\draw (2.5,2.5) node {$c$};
\draw (3.5,0.5) node {$c$};
\draw (3.5,2.5) node {$\red{c'}$};
\draw (3.5,1.5) node {$0$};
\draw (3.5,3.5) node {$c$};
\draw (4.5,3.5) node {$0$};
\draw (4.5,1.5) node {$\blue{c'}$};
\draw (4.5,2.5) node {$0$};
\draw (4.5,0.5) node {$0$};
\draw (5.5,3.5) node {$0$};
\draw (5.5,2.5) node {$0$};
\draw (5.5,1.5) node {$0$};
\draw (5.5,0.5) node {$0$};
\draw (4.0,-.75) node {Case $3\gamma F3$:}; 
\draw (4.0,-1.5) node {$\gamma \to \beta$}; 
\end{scope}

\begin{scope}[xshift=51cm, yshift=0cm, scale=2.5]
\foreach \i in {3,4,5,6}{
\path[draw, thin] (\i,0)--(\i,4);}
\foreach \j in {0,1,2,3,4}{
\path[draw, densely dotted] (2,\j)--(7,\j);} 
\foreach \j in {0,1,2,3,4}{ 
\path[draw, thin] (3,\j)--(6,\j);}
\path[draw, very thick]  (2,0)--(4,0)--(4,4)--(2,4);
\draw (3.5,4.5) node {$j$};
\draw (2.5,1.65) node {$0$};
\draw (2.5,1.25) node {\tiny{safe}};
\draw (2.5,2.5) node {$c$};
\draw (3.5,0.5) node {$c$};
\draw (3.5,2.5) node {$0$};
\draw (3.5,1.5) node {$0$};
\draw (3.5,3.5) node {$c$};
\draw (4.5,3.5) node {$0$};
\draw (4.5,1.5) node {$\blue{c}$};
\draw (4.5,2.5) node {$0$};
\draw (4.5,0.5) node {$0$};
\draw (5.5,3.5) node {$0$};
\draw (5.5,2.5) node {$\red{c}$};
\draw (6.75,2.5) node {$\neq c$};
\draw (5.5,1.5) node {$0$};
\draw (5.5,0.5) node {$0$};
\draw (4.5,-.75) node {Case $3\gamma F4$:};
\draw (4.5,-1.5) node {$\gamma\to\beta$ or merge}; 
\end{scope}

\begin{scope}[xshift=0cm, yshift=-20cm, scale=2.5]
\foreach \i in {3,4,5,6}{
\path[draw, thin] (\i,0)--(\i,4);}
\foreach \j in {0,1,2,3,4}{
\path[draw, densely dotted] (2,\j)--(7,\j);} 
\foreach \j in {0,1,2,3,4}{ 
\path[draw, thin] (3,\j)--(6,\j);}
\path[draw, very thick]  (2,0)--(4,0)--(4,4)--(2,4);
\path[draw, very thick]  (7,0)--(6,0)--(6,4)--(7,4);
\draw (3.5,4.5) node {$j$};
\draw (2.5,1.65) node {$0$};
\draw (2.5,1.25) node {\tiny{safe}};
\draw (2.5,2.5) node {$c$};
\draw (3.5,0.5) node {$c$};
\draw (3.5,2.5) node {$0$};
\draw (3.5,1.5) node {$0$};
\draw (3.5,3.5) node {$c$};
\draw (4.5,3.5) node {$0$};
\draw (4.5,1.5) node {$\blue{c}$};
\draw (4.5,2.5) node {$0$};
\draw (4.5,0.5) node {$0$};
\draw (5.5,3.5) node {$\red{c}$};
\draw (5.5,2.5) node {$0$};
\draw (6.5,2.5) node {$c$};
\draw (5.5,1.5) node {$0$};
\draw (5.5,0.5) node {$0$};
\draw (4.2,-.75) node {Case $3\gamma F5$:}; 
\draw (4.2,-1.5) node {$\gamma\to$ merge}; 
\end{scope}

\begin{scope}[xshift=17cm, yshift=-20cm, scale=2.5]
\foreach \i in {3,4,5}{
\path[draw, thin] (\i,0)--(\i,4);}
\foreach \j in {0,1,2,3,4}{
\path[draw, densely dotted] (2,\j)--(6,\j);} 
\foreach \j in {0,1,2,3,4}{ 
\path[draw, thin] (3,\j)--(5,\j);}
\path[draw, very thick]  (2,0)--(4,0)--(4,4)--(2,4);
\draw (3.5,4.5) node {$j$};
\draw (2.5,1.65) node {$0$};
\draw (2.5,1.25) node {\tiny{safe}};
\draw (2.5,2.5) node {$c$};
\draw (3.5,0.5) node {$c$};
\draw (3.5,1.5) node {$\red{c'}$};
\draw (3.5,2.5) node {$0$};
\draw (3.5,3.5) node {$c$};
\draw (4.5,3.5) node {$0$};
\draw (4.5,2.5) node {$\blue{c'}$};
\draw (4.5,1.5) node {$0$};
\draw (4.5,0.5) node {$0$};
\draw (5.5,3.5) node {$0$};
\draw (5.5,2.5) node {$0$};
\draw (5.5,1.5) node {$0$};
\draw (5.5,0.5) node {$0$};
\draw (4.0,-.75) node {Case $3\gamma F6$:}; 
\draw (4.0,-1.5) node {$\gamma\to\beta$}; 
\end{scope}

\begin{scope}[xshift=34cm, yshift=-20cm, scale=2.5]
\foreach \i in {3,4,5,6}{
\path[draw, thin] (\i,0)--(\i,4);}
\foreach \j in {0,1,2,3,4}{
\path[draw, densely dotted] (2,\j)--(7,\j);} 
\foreach \j in {0,1,2,3,4}{ 
\path[draw, thin] (3,\j)--(6,\j);}
\path[draw, very thick]  (2,0)--(4,0)--(4,4)--(2,4);
\draw (3.5,4.5) node {$j$};
\draw (2.5,1.65) node {$0$};
\draw (2.5,1.25) node {\tiny{safe}};
\draw (2.5,2.5) node {$c$};
\draw (3.5,0.5) node {$c$};
\draw (3.5,1.5) node {$0$};
\draw (3.5,2.5) node {$0$};
\draw (3.5,3.5) node {$c$};
\draw (4.5,3.5) node {$0$};
\draw (4.5,2.5) node {$\blue{c}$};
\draw (4.5,1.5) node {$0$};
\draw (4.5,0.5) node {$0$};
\draw (5.5,3.5) node {$0$};
\draw (5.5,2.5) node {$0$};
\draw (5.5,1.5) node {$\red{c}$};
\draw (5.5,0.5) node {$0$};
\draw (6.8,1.5) node {$\ne c$};
\draw (4.5,-.75) node {Case $3\gamma F7$:}; 
\draw (4.5,-1.5) node {$\gamma\to\beta$ or merge}; 
\end{scope}

\begin{scope}[xshift=51cm, yshift=-20cm, scale=2.5]
\fill[red!20] (5,2) rectangle (6,3);
\foreach \i in {3,4,5,6}{
\path[draw, thin] (\i,0)--(\i,4);}
\foreach \j in {0,1,2,3,4}{
\path[draw, densely dotted] (2,\j)--(7,\j);} 
\foreach \j in {0,1,2,3,4}{ 
\path[draw, thin] (3,\j)--(6,\j);}
\path[draw, very thick]  (2,0)--(4,0)--(4,4)--(2,4);
\path[draw, very thick]  (7,0)--(6,0)--(6,4)--(7,4);
\draw (3.5,4.5) node {$j$};
\draw (2.5,1.65) node {$0$};
\draw (2.5,1.25) node {\tiny{safe}};
\draw (2.5,2.5) node {$c$};
\draw (3.5,0.5) node {$c$};
\draw (3.5,1.5) node {$0$};
\draw (3.5,2.5) node {$0$};
\draw (3.5,3.5) node {$c$};
\draw (4.5,3.5) node {$0$};
\draw (4.5,2.5) node {$\blue{c}$};
\draw (4.5,1.5) node {$0$};
\draw (4.5,0.5) node {$0$};
\draw (5.5,3.5) node {$0$};
\draw (5.5,2.5) node {$0$};
\draw (5.5,1.5) node {$0$};
\draw (5.5,0.5) node {$\red{c}$};
\draw (6.5,1.5) node {$c$};
\draw (4.5,-.75) node {Case $3\gamma F 8$:};
\draw (4.5,-1.5) node {$\gamma\to\Delta'$};
\end{scope}

\end{tikzpicture}
\caption{Case $3\gamma F$: Bob colors a vertex in an empty column adjacent to a free border in configuration $\gamma$ at column $j$.}
\label{fig-6gamma}
\end{figure}
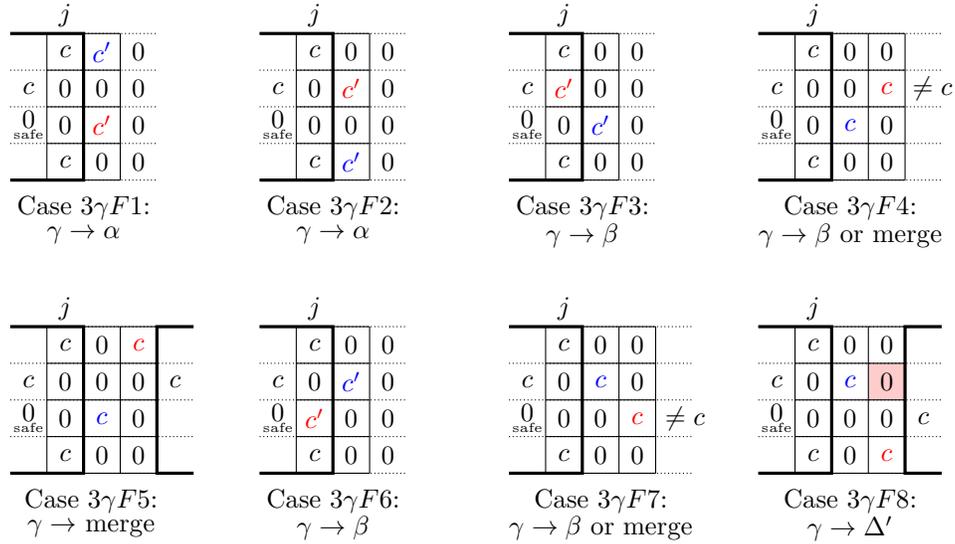

\begin{itemize}
\item {\bf Case $3\gamma F 1$:} If Bob colors $v_{4,j+1}$ with $c'$, then Alice  colors $v_{2,j+1}$ with $c'$ obtaining a border in configuration $\alpha$.
\item {\bf Case $3\gamma F 2$:} If Bob colors $v_{1,j+1}$ with $c'$, then Alice  colors $v_{3,j+1}$ with $c'$ obtaining a border in configuration $\alpha$.
\item {\bf Case $3\gamma F 3$:} If Bob colors $v_{2,j+1}$ with $c' \neq c$, then Alice  colors $v_{3,j}$ with $c'$ obtaining a border in configuration $\beta$.
\item {\bf Case $3\gamma F 4$:} If Bob colors $v_{2,j+1}$ with $c$ and $c(v_{3,j+3})\neq c$, then Alice  colors $v_{3,j}$ with $c$ obtaining a border in configuration $\beta$ or merging two blocks.
\item {\bf Case $3\gamma F 5$:} If Bob colors $v_{2,j+1}$ with $c$ and $c(v_{3,j+3})= c$ and $c(v_{1,j+3})\neq c$ then Alice  colors $v_{4,j+2}$ with $c$ making $v_{3,j+2}$ safe, $v_{2,j+2}$ sound (with doctors $v_{3,j+2}$ and $v_{1,j+2}$) and $v_{3,j+1}$ sound (with doctors $v_{3,j}$ and $v_{4,j+1}$).
\item {\bf Case $3\gamma F 6$:} If Bob colors $v_{3,j+1}$ with $c'\ne c$, then Alice  colors $v_{2,j}$ with $c'$ obtaining a border in configuration $\beta$.
\end{itemize}

So, assume that Bob colors $v_{3,j+1}$ with $c$.

\begin{itemize}
\item {\bf Case $3\gamma F 7$:} If $v_{2,j+3}$ is not colored $c$, then Alice colors $v_{2,j+2}$ with $c$ obtaining a border in configuration $\beta$.
\item {\bf Case $3\gamma F8$:} So, also assume that $v_{2,j+3}$ is colored with $c$. Then we are in configuration $\Delta'$, with all vertices safe or sound, except $v_{3,j+2}$, which is sick.
\end{itemize}

\end{itemize}

In the next, we assume that Bob just played in an empty column separating two borders (not in configuration $\delta$ nor $\pi$). First, let us assume that at least one of these borders is in configuration $\gamma$. 

\begin{itemize}
\item {\bf Case $3\gamma$.} Bob colors a vertex in an empty column that is separating two blocks with one of them in configuration $\gamma$ and the other in configuration $\alpha$, $\beta$ or $\gamma$. The cases are depicted in Figure~\ref{fig-case7c} and all of them merge the two blocks.

Let $j$ be the column of the border of the block in configuration $\gamma$. Up to symmetry, we consider the column $j$ is the right border of its block, that is, $j+1$ is the empty column before Bob's move.
If Bob colors $v_{2,j+1}$ (resp. $v_{3,j+1}$), Alice can color $v_{3,j+1}$ (resp. $v_{2,j+1}$) and we are done. So assume that Bob colored either $v_{1,j+1}$ or $v_{4,j+1}$.

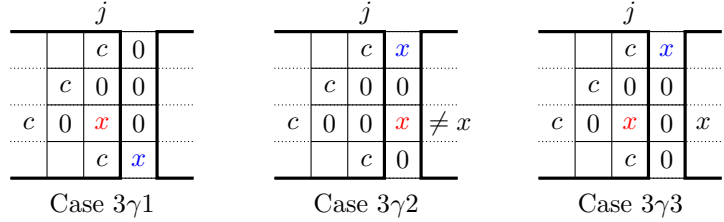
\begin{figure}[ht!]\centering
\begin{tikzpicture}[scale=0.195]
\tikzstyle{vertex}=[draw,circle,fill=black,inner sep=0pt, minimum size=1pt]

\begin{scope}[xshift=0cm, yshift=0cm, scale=2.5]
\foreach \i in {2,3,4,5}{
\path[draw, thin] (\i,0)--(\i,4);}
\foreach \j in {0,1,2,3,4}{
\path[draw, densely dotted] (1,\j)--(6,\j);} 
\foreach \j in {0,1,2,3,4}{ 
\path[draw, thin] (2,\j)--(5,\j);}
\path[draw, very thick]  (1,0)--(4,0)--(4,4)--(1,4);
\path[draw, very thick]  (6,0)--(5,0)--(5,4)--(6,4);
\draw (3.5,4.5) node {$j$};
\draw (1.5,1.5) node {$c$};
\draw (2.5,1.5) node {$0$};
\draw (2.5,2.5) node {$c$};
\draw (3.5,0.5) node {$c$};
\draw (3.5,1.5) node {$\red{x}$};
\draw (3.5,2.5) node {$0$};
\draw (3.5,3.5) node {$c$};
\draw (4.5,0.5) node {$\blue{x}$};
\draw (4.5,1.5) node {$0$};
\draw (4.5,2.5) node {$0$};
\draw (4.5,3.5) node {$0$};
\draw (3.5,-.75) node {Case $3\gamma 1$};
\end{scope}

\begin{scope}[xshift=18cm, yshift=0cm, scale=2.5]
\foreach \i in {2,3,4,5}{
\path[draw, thin] (\i,0)--(\i,4);}
\foreach \j in {0,1,2,3,4}{
\path[draw, densely dotted] (1,\j)--(6,\j);} 
\foreach \j in {0,1,2,3,4}{ 
\path[draw, thin] (2,\j)--(5,\j);}
\path[draw, very thick]  (1,0)--(4,0)--(4,4)--(1,4);
\path[draw, very thick]  (6,0)--(5,0)--(5,4)--(6,4);
\draw (3.5,4.5) node {$j$};
\draw (1.5,1.5) node {$c$};
\draw (2.5,1.5) node {$0$};
\draw (2.5,2.5) node {$c$};
\draw (3.5,0.5) node {$c$};
\draw (3.5,1.5) node {$0$};
\draw (3.5,2.5) node {$0$};
\draw (3.5,3.5) node {$c$};
\draw (4.5,0.5) node {$0$};
\draw (4.5,1.5) node {$\red{x}$};
\draw (4.5,2.5) node {$0$};
\draw (4.5,3.5) node {$\blue{x}$};
\draw (5.8,1.5) node {$\ne x$};
\draw (3.5,-.75) node {Case $3\gamma 2$}; 
\end{scope}

\begin{scope}[xshift=36cm, yshift=0cm, scale=2.5]
\foreach \i in {2,3,4,5}{
\path[draw, thin] (\i,0)--(\i,4);}
\foreach \j in {0,1,2,3,4}{
\path[draw, densely dotted] (1,\j)--(6,\j);} 
\foreach \j in {0,1,2,3,4}{ 
\path[draw, thin] (2,\j)--(5,\j);}
\path[draw, very thick]  (1,0)--(4,0)--(4,4)--(1,4);
\path[draw, very thick]  (6,0)--(5,0)--(5,4)--(6,4);
\draw (3.5,4.5) node {$j$};
\draw (1.5,1.5) node {$c$};
\draw (2.5,1.5) node {$0$};
\draw (2.5,2.5) node {$c$};
\draw (3.5,0.5) node {$c$};
\draw (3.5,1.5) node {$\red{x}$};
\draw (3.5,2.5) node {$0$};
\draw (3.5,3.5) node {$c$};
\draw (4.5,0.5) node {$0$};
\draw (4.5,1.5) node {$0$};
\draw (4.5,2.5) node {$0$};
\draw (4.5,3.5) node {$\blue{x}$};
\draw (5.5,1.5) node {$x$};
\draw (3.5,-.75) node {Case $3\gamma 3$};
\end{scope}

\end{tikzpicture}
\caption{Case $3\gamma$: Bob colors a vertex in an empty column that is separating two blocks with one
of them in configuration $\gamma$ and the other in configuration $\alpha$, $\beta$ or $\gamma$. In all cases, the two blocks are merged after Alice's move.}
\label{fig-case7c}
\end{figure}

\begin{itemize}
\item {\bf Case $3\gamma 1$:} If Bob colors $v_{1,j+1}$ with $x\ne c$, Alice colors $v_{2,j}$ with $x$, making $v_{2,j+1}$ safe and $v_{3,j+1}$ sound (with doctors $v_{3,j}$ and $v_{4,j+1}$).
\end{itemize}

So, assume Bob colors $v_{4,j+1}$ with $x\ne c$.

\begin{itemize}
\item {\bf Case $3\gamma 2$:} If $v_{2,j+2}$ is not colored $x$, Alice colors $v_{2,j+1}$ with $x$, making $v_{2,j+1}$ and $v_{3,j+1}$ safe.
\end{itemize}

So, also assume that $v_{2,j+2}$ is colored $x$.

\begin{itemize}
\item {\bf Case $3\gamma 3$:} Since $x\ne c$, Alice can color $v_{2,j}$ with $x$, making $v_{2,j+1}$ safe and $v_{3,j+1}$ sound (with doctors $v_{2,j+1}$ and $v_{3,j}$).
\end{itemize}

Next, we assume that Bob just played in an empty column separating two borders (not in configuration $\delta$ nor $\pi$ nor $\gamma$). We further assume that one of these borders is in configuration $\beta$.

\item {\bf Case $3\beta$.} Bob colors a vertex in an empty column that is separating two blocks with one of them in configuration $\beta$ and the other in configuration $\alpha$ or $\beta$. The cases are depicted in Figure~\ref{fig-case7b} and all of them merge the two blocks.

Let $j$ be the column of the border of the block in configuration $\beta$. Up to symmetry, we consider the column $j$ is the right border of its block, that is, $j+1$ is the empty column before Bob's move. If Bob colors $v_{2,j+1}$ (resp. $v_{3,j+1}$), Alice can color $v_{3,j+1}$ (resp. $v_{2,j+1}$) and we are done. So assume that Bob colored either $v_{1,j+1}$ or $v_{4,j+1}$.

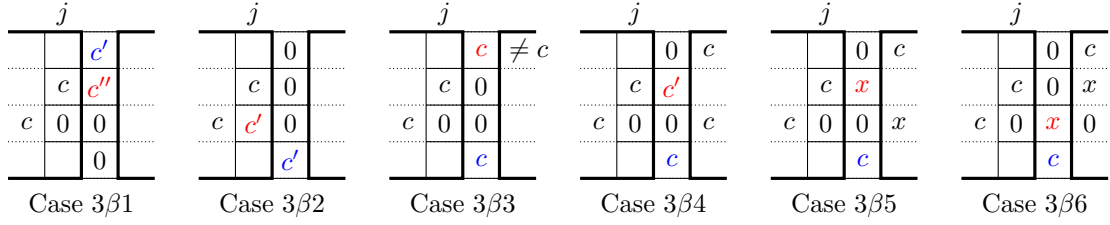
\begin{figure}[ht!]\centering
\begin{tikzpicture}[scale=0.195]
\tikzstyle{vertex}=[draw,circle,fill=black,inner sep=0pt, minimum size=1pt]

\begin{scope}[xshift=0cm, yshift=0cm, scale=2.5]
\foreach \i in {1,2,3}{
\path[draw, thin] (\i,0)--(\i,4);}
\foreach \j in {0,1,2,3,4}{
\path[draw, densely dotted] (0,\j)--(4,\j);} 
\foreach \j in {0,1,2,3,4}{ 
\path[draw, thin] (1,\j)--(3,\j);} 
\path[draw, very thick] (0,0)--(2,0)--(2,4)--(0,4);
\path[draw, very thick] (4,0)--(3,0)--(3,4)--(4,4);
\draw (1.5,1.5) node {$0$};
\draw (1.5,4.5) node {$j$};
\draw (0.5,1.5) node {$c$};
\draw (1.5,2.5) node {$c$};
\draw (2.5,3.5) node {$\blue{c'}$};
\draw (2.5,2.5) node {$\red{c''}$};
\draw (2.5,1.5) node {$0$};
\draw (2.5,0.5) node {$0$};
\draw (2,-0.75) node {Case $3\beta 1$};
\end{scope}

\begin{scope}[xshift=13cm, yshift=0cm, scale=2.5]
\foreach \i in {1,2,3}{
\path[draw, thin] (\i,0)--(\i,4);}
\foreach \j in {0,1,2,3,4}{
\path[draw, densely dotted] (0,\j)--(4,\j);} 
\foreach \j in {0,1,2,3,4}{ 
\path[draw, thin] (1,\j)--(3,\j);} 
\path[draw, very thick] (0,0)--(2,0)--(2,4)--(0,4);
\path[draw, very thick] (4,0)--(3,0)--(3,4)--(4,4);
\draw (1.5,4.5) node {$j$};
\draw (0.5,1.5) node {$c$};
\draw (1.5,2.5) node {$c$};
\draw (1.5,1.5) node {\red{$c'$}};
\draw (2.5,3.5) node {$0$};
\draw (2.5,2.5) node {$0$};
\draw (2.5,1.5) node {$0$};
\draw (2.5,0.5) node {$\blue{c'}$};
\draw (2,-0.75) node {Case $3\beta 2$};
\end{scope}

\begin{scope}[xshift=26cm, yshift=0cm, scale=2.5]
\foreach \i in {1,2,3}{
\path[draw, thin] (\i,0)--(\i,4);}
\foreach \j in {0,1,2,3,4}{
\path[draw, densely dotted] (0,\j)--(4,\j);} 
\foreach \j in {0,1,2,3,4}{ 
\path[draw, thin] (1,\j)--(3,\j);} 
\path[draw, very thick] (0,0)--(2,0)--(2,4)--(0,4);
\path[draw, very thick] (4,0)--(3,0)--(3,4)--(4,4);
\draw (1.5,4.5) node {$j$};
\draw (0.5,1.5) node {$c$};
\draw (1.5,2.5) node {$c$};
\draw (1.5,1.5) node {$0$};
\draw (2.5,3.5) node {$\red{c}$};
\draw (2.5,2.5) node {$0$};
\draw (2.5,1.5) node {$0$};
\draw (2.5,0.5) node {$\blue{c}$};
\draw (3.8,3.5) node {$\ne c$};
\draw (2,-0.75) node {Case $3\beta 3$};
\end{scope}

\begin{scope}[xshift=39cm, yshift=0cm, scale=2.5]
\foreach \i in {1,2,3}{
\path[draw, thin] (\i,0)--(\i,4);}
\foreach \j in {0,1,2,3,4}{
\path[draw, densely dotted] (0,\j)--(4,\j);} 
\foreach \j in {0,1,2,3,4}{ 
\path[draw, thin] (1,\j)--(3,\j);} 
\path[draw, very thick] (0,0)--(2,0)--(2,4)--(0,4);
\path[draw, very thick] (4,0)--(3,0)--(3,4)--(4,4);
\draw (1.5,4.5) node {$j$};
\draw (0.5,1.5) node {$c$};
\draw (1.5,2.5) node {$c$};
\draw (1.5,1.5) node {$0$};
\draw (2.5,3.5) node {$0$};
\draw (2.5,2.5) node {$\red{c'}$};
\draw (2.5,1.5) node {$0$};
\draw (2.5,0.5) node {$\blue{c}$};
\draw (3.5,3.5) node {$c$};
\draw (3.5,1.5) node {$c$};
\draw (2,-0.75) node {Case $3\beta 4$};
\end{scope}

\begin{scope}[xshift=52cm, yshift=0cm, scale=2.5]
\foreach \i in {1,2,3}{
\path[draw, thin] (\i,0)--(\i,4);}
\foreach \j in {0,1,2,3,4}{
\path[draw, densely dotted] (0,\j)--(4,\j);} 
\foreach \j in {0,1,2,3,4}{ 
\path[draw, thin] (1,\j)--(3,\j);} 
\path[draw, very thick] (0,0)--(2,0)--(2,4)--(0,4);
\path[draw, very thick] (4,0)--(3,0)--(3,4)--(4,4);
\draw (1.5,4.5) node {$j$};
\draw (0.5,1.5) node {$c$};
\draw (1.5,2.5) node {$c$};
\draw (1.5,1.5) node {$0$};
\draw (2.5,3.5) node {$0$};
\draw (2.5,2.5) node {$\red{x}$};
\draw (2.5,1.5) node {$0$};
\draw (2.5,0.5) node {$\blue{c}$};
\draw (3.5,3.5) node {$c$};
\draw (3.5,1.5) node {$x$};
\draw (2,-0.75) node {Case $3\beta 5$};
\end{scope}

\begin{scope}[xshift=65cm, yshift=0cm, scale=2.5]
\foreach \i in {1,2,3}{
\path[draw, thin] (\i,0)--(\i,4);}
\foreach \j in {0,1,2,3,4}{
\path[draw, densely dotted] (0,\j)--(4,\j);} 
\foreach \j in {0,1,2,3,4}{ 
\path[draw, thin] (1,\j)--(3,\j);} 
\path[draw, very thick] (0,0)--(2,0)--(2,4)--(0,4);
\path[draw, very thick] (4,0)--(3,0)--(3,4)--(4,4);
\draw (1.5,4.5) node {$j$};
\draw (0.5,1.5) node {$c$};
\draw (1.5,2.5) node {$c$};
\draw (1.5,1.5) node {$0$};
\draw (2.5,3.5) node {$0$};
\draw (2.5,2.5) node {$0$};
\draw (2.5,1.5) node {$\red{x}$};
\draw (2.5,0.5) node {$\blue{c}$};
\draw (3.5,3.5) node {$c$};
\draw (3.5,2.5) node {$x$};
\draw (3.5,1.5) node {$0$};
\draw (2,-0.75) node {Case $3\beta 6$};
\end{scope}

\end{tikzpicture}
\caption{Case $3\beta$: Bob colors a vertex in an empty column that is separating two blocks with one of them in configuration $\beta$ and the other in configuration $\alpha$ or $\beta$. In each case, Alice's move merges the two blocks.}
\label{fig-case7b}
\end{figure}

\begin{itemize}
\item {\bf Case $3\beta 1$:} If Bob colors $v_{4,j+1}$ with any color, Alice colors $v_{3,j+1}$ with any other color, making $v_{3,j+1}$ safe and $v_{2,j+1}$ sound (with doctors $v_{1,j+1}$ and $v_{2,j}$).
\end{itemize}

So, assume Bob colors $v_{1,j+1}$ with $c'$, which may be $c$.

\begin{itemize}
\item {\bf Case $3\beta 2$:} If $c'\neq c$, Alice colors $v_{2,j}$, making $v_{2,j+1}$ safe and $v_{3,j+1}$ sound (with doctors $v_{2,j+1}$ and $v_{4,j+1}$).
\end{itemize}

So, in fact, assume Bob colors $v_{1,j+1}$ with $c$.

\begin{itemize}
\item {\bf Case $3\beta 3$:} If $v_{4,j+2}$ is not colored $c$, Alice colors $v_{4,j+1}$ with $c$, making $v_{3,j+1}$ safe and $v_{2,j+1}$ sound (with doctors $v_{2,j}$ and $v_{3,j+1}$).
\end{itemize}

So, also assume $v_{4,j+2}$ is colored $c$.

\begin{itemize}
\item {\bf Case $3\beta 4$:} If $v_{2,j+2}$ is colored $c$ (then $v_{2,j+1}$ is already safe), Alice colors $v_{3,j+1}$ with any color, making  it safe.
\item {\bf Case $3\beta 5$:} If $v_{2,j+2}$ is colored $x\ne c$, Alice colors $v_{3,j+1}$ with $x$, making $v_{2,j+1}$ and $v_{3,j+1}$ safe.
\end{itemize}

So, also assume $v_{2,j+2}$ is not colored.

\begin{itemize}
\item {\bf Case $3\beta 6$:} If $v_{3,j+2}$ is colored $x$, Alice colors $v_{2,j+1}$ with $x$, making $v_{2,j+1}$ and $v_{3,j+1}$ safe.
\end{itemize}

So, also assume $v_{3,j+2}$ is not colored.
Then, by induction, column $j+2$ must be in configuration $\gamma$, a contradiction since this case considers an empty column between a border in configuration $\beta$ and a border in configuration $\alpha$ or $\beta$.

\end{itemize}

Finally, we assume that Bob just played in an empty column separating two borders both in configuration $\alpha$, except in configurations $\Lambda$ and $\Lambda'$, which are treated in cases $1\Lambda$ and $1\Lambda'$.

\begin{itemize}
\item {\bf Case $3\alpha$.} Bob colors a vertex in an empty column that is separating two blocks in configuration $\alpha$. The cases are depicted in Figure~\ref{fig-case7a} and all of them merge the two blocks.

Let $j$ be the column of the right border of the block in the left, that is, $j+1$ is the empty column before Bob's move.
If Bob colors $v_{2,j+1}$ (resp. $v_{3,j+1}$), Alice can color $v_{3,j+1}$ (resp. $v_{2,j+1}$) and we are done. So assume that Bob colored either $v_{1,j+1}$ or $v_{4,j+1}$.
Also notice that the border at column $j+2$ contains two non-colored vertices from Property $(5)$.

\begin{figure}[ht!]\centering
\begin{tikzpicture}[scale=0.195]
\tikzstyle{vertex}=[draw,circle,fill=black,inner sep=0pt, minimum size=1pt]

\begin{scope}[xshift=0cm, yshift=0cm, scale=2.5]
\fill[blue!20] (1,3) rectangle (2,4);
\fill[blue!20] (1,0) rectangle (2,1);
\foreach \j in {0,1,2,3,4}{ 
\path[draw, thin] (1,\j)--(2,\j) ;} 
\foreach \i in {1,2}{
\path[draw, thin] (\i,0)--(\i,4);}
\foreach \j in {0,1,2,3}{ 
\path[draw, densely dotted] (0,\j)--(3,\j);} 
\path[draw, very thick] (0,0)--(1,0)--(1,4)--(0,4) ;
\path[draw, very thick] (3,0)--(2,0)--(2,4)--(3,4);
\draw (0.5,4.5) node {$j$};
\draw (0.5,3.5) node {$c'$};
\draw (0.5,2.5) node {};
\draw (0.5,1.5) node {$c'$};
\draw (0.5,0.5) node {};
\draw (1.5,0.5) node {$0$};
\draw (1.5,1.5) node {$0$};
\draw (1.5,2.5) node {$\red{c'}$};
\draw (1.5,3.5) node {$0$};
\draw (2.5,0.5) node {$0$};
\draw (2.5,1.5) node {$c$};
\draw (2.5,2.5) node {$0$};
\draw (2.5,3.5) node {$c$};
\draw (1.5,-0.75) node {Case $3\alpha 1$}; 
\end{scope}

\begin{scope}[xshift=11cm, yshift=0cm, scale=2.5]
\fill[blue!20] (1,3) rectangle (2,4);
\fill[blue!20] (1,0) rectangle (2,1);
\foreach \i in {1,2}{
\path[draw, thin] (\i,0)--(\i,4);}
\foreach \j in {0,1,2,3,4}{
\path[draw, densely dotted] (0,\j)--(3,\j);} 
\foreach \j in {0,1,2,3,4}{ 
\path[draw, thin] (1,\j)--(2,\j) ;} 
\path[draw, very thick] (0,0)--(1,0)--(1,4)--(0,4) ;
\path[draw, very thick] (3,0)--(2,0)--(2,4)--(3,4);
\draw (0.5,4.5) node {$j$};
\draw (0.5,3.5) node {};
\draw (0.5,2.5) node {$c'$};
\draw (0.5,1.5) node {};
\draw (0.5,0.5) node {$c'$};
\draw (1.5,0.5) node {$0$};
\draw (1.5,1.5) node {$\red{c'}$};
\draw (1.5,2.5) node {$0$};
\draw (1.5,3.5) node {$0$};
\draw (2.5,0.5) node {$0$};
\draw (2.5,1.5) node {$c$};
\draw (2.5,2.5) node {$0$};
\draw (2.5,3.5) node {$c$};
\draw (1.5,-0.75) node {Case $3\alpha 2$}; 
\end{scope}

\begin{scope}[xshift=22cm, yshift=0cm, scale=2.5]
\foreach \i in {1,2}{
\path[draw, thin] (\i,0)--(\i,4);}
\foreach \j in {0,1,2,3,4}{ 
\path[draw, densely dotted] (0,\j)--(3,\j);} 
\foreach \j in {0,1,2,3,4}{ 
\path[draw, thin] (1,\j)--(2,\j) ;} 
\path[draw, very thick] (0,0)--(1,0)--(1,4)--(0,4) ;
\path[draw, very thick] (3,0)--(2,0)--(2,4)--(3,4);
\draw (0.5,4.5) node {$j$};
\draw (0.5,3.5) node {};
\draw (0.5,2.5) node {$c$};
\draw (0.5,1.5) node {};
\draw (0.5,0.5) node {$c$};
\draw (1.5,0.5) node {$\blue{x}$};
\draw (1.5,1.5) node {$0$};
\draw (1.5,2.5) node {$\red{x}$};
\draw (1.5,3.5) node {$0$};
\draw (2.5,0.5) node {$0$};
\draw (2.5,1.5) node {$c$};
\draw (2.5,2.5) node {$0$};
\draw (2.5,3.5) node {$c$};
\draw (1.5,-0.75) node {Case $3\alpha 3$}; 
\end{scope}

\begin{scope}[xshift=33cm, yshift=0cm, scale=2.5]
\foreach \i in {1,2}{
\path[draw, thin] (\i,0)--(\i,4);}
\foreach \j in {0,1,2,3,4}{ 
\path[draw, densely dotted] (0,\j)--(3,\j);} 
\foreach \j in {0,1,2,3,4}{ 
\path[draw, thin] (1,\j)--(2,\j) ;} 
\path[draw, very thick] (0,0)--(1,0)--(1,4)--(0,4) ;
\path[draw, very thick] (3,0)--(2,0)--(2,4)--(3,4);
\draw (0.5,4.5) node {$j$};
\draw (0.5,3.5) node {};
\draw (0.5,2.5) node {$c$};
\draw (0.2,1.5) node {$\ne x$};
\draw (0.5,0.5) node {$c$};
\draw (1.5,0.5) node {$0$};
\draw (1.5,1.5) node {$\red{x}$};
\draw (1.5,2.5) node {$0$};
\draw (1.5,3.5) node {$\blue{x}$};
\draw (2.5,0.5) node {$0$};
\draw (2.5,1.5) node {$c$};
\draw (2.5,2.5) node {$0$};
\draw (2.5,3.5) node {$c$};
\draw (1.5,-0.75) node {Case $3\alpha 4$}; 
\end{scope}

\begin{scope}[xshift=44cm, yshift=0cm, scale=2.5]
\foreach \i in {1,2,3}{
\path[draw, thin] (\i,0)--(\i,4);}
\foreach \j in {0,1,2,3,4}{
\path[draw, densely dotted] (0,\j)--(4,\j);} 
\foreach \j in {0,1,2,3,4}{ 
\path[draw, thin] (1,\j)--(3,\j) ;} 
\path[draw, thin] (3,4)--(4,4) ;
\path[draw, thin] (3,0)--(4,0) ;
\path[draw, very thick] (0,0)--(1,0)--(1,4)--(0,4) ;
\path[draw, very thick] (3,0)--(2,0)--(2,4)--(3,4) -- (3,0);
\draw (0.5,4.5) node {$j$};
\draw (0.5,3.5) node {};
\draw (0.5,2.5) node {$c$};
\draw (0.5,1.5) node {$x$};
\draw (0.5,0.5) node {$c$};
\draw (1.5,0.5) node {$0$};
\draw (1.5,1.5) node {$0$};
\draw (1.5,2.5) node {$0$};
\draw (1.5,3.5) node {$\blue{x}$};
\draw (2.5,0.5) node {$0$};
\draw (2.5,1.5) node {$c$};
\draw (2.5,2.5) node {$\red{x}$};
\draw (2.5,3.5) node {$c$};
\draw (3.5,3.5) node {$0$};
\draw (3.5,2.5) node {$0$};
\draw (3.5,1.5) node {$0$};
\draw (3.5,0.5) node {$0$};
\draw (1.5,-0.75) node {Case $3\alpha 5$}; 
\end{scope}

\begin{scope}[xshift=60cm, yshift=0cm, scale=2.5]
\foreach \i in {1,2,3}{
\path[draw, thin] (\i,0)--(\i,4);}
\foreach \j in {0,1,2,3,4}{ 
\path[draw, densely dotted] (0,\j)--(4,\j);} 
\foreach \j in {0,1,2,3,4}{ 
\path[draw, thin] (1,\j)--(3,\j) ;} 
\path[draw, very thick] (0,0)--(1,0)--(1,4)--(0,4) ;
\path[draw, very thick] (4,0)--(2,0)--(2,4)--(4,4);
\draw (0.5,4.5) node {$j$};
\draw (0.5,3.5) node {};
\draw (0.5,2.5) node {$c$};
\draw (0.5,1.5) node {$x$};
\draw (0.5,0.5) node {$c$};
\draw (1.5,0.5) node {$\red{x}$};
\draw (1.5,1.5) node {$0$};
\draw (1.5,2.5) node {$0$};
\draw (1.5,3.5) node {$\blue{x}$};
\draw (2.5,0.5) node {$0$};
\draw (2.5,1.5) node {$c$};
\draw (2.5,2.5) node {$0$};
\draw (2.5,3.5) node {$c$};
\draw (3.8,4.5) node {$\ne 0$};
\draw (1.5,-0.75) node {Case $3\alpha 6$};
\end{scope}

\end{tikzpicture}
\caption{Case $3\alpha$: Bob colors a vertex in an empty column that is separating two blocks in configuration $\alpha$.  In each case, Alice's move merges the two blocks.}
\label{fig-case7a}
\end{figure}
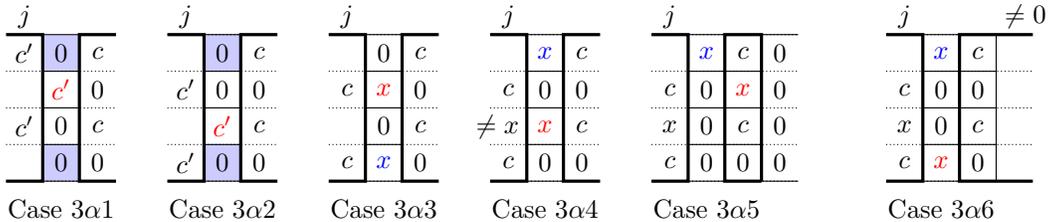

The subcases depend on the symmetry between the two borders in configuration $\alpha$ separated by the empty column.

\begin{itemize}
\item {\bf Case $3\alpha 1$:} If Bob colors $v_{1,j+1}$ or $v_{4,j+1}$, Alice colors $v_{3,j+1}$ with $c'$, making $v_{2,j+1}$ and $v_{3,j+1}$ safe.
\item {\bf Case $3\alpha 2$:} If $c'\ne c$, Alice colors $v_{2,j+1}$ with $c'$, making $v_{2,j+1}$ and $v_{3,j+1}$ safe, independently if Bob colored $v_{1,j+1}$ or $v_{4,j+1}$.
\end{itemize}

So, assume that $v_{1,j}$, $v_{3,j}$, $v_{2,j+2}$ and $v_{4,j+2}$ have the same color $c$.

\begin{itemize}
\item {\bf Case $3\alpha 3$:} If Bob colors $v_{1,j+1}$ with $x$, Alice colors $v_{3,j+1}$ with $x$, making $v_{2,j+1}$ and $v_{3,j+1}$ safe.
\end{itemize}

So, also assume that Bob colored $v_{4,j+1}$ with $x\ne c$.

\begin{itemize}
\item {\bf Case $3\alpha 4$:} If $v_{2,j}$ is not colored $x$, Alice colors $v_{2,j+1}$ with $x$, making $v_{2,j+1}$ and $v_{3,j+1}$ safe.
\end{itemize}

So, also assume that $v_{2,j}$ is colored $x$.

\begin{itemize}
\item {\bf Case $3\alpha 5$:} If column $j+3$ is empty, Alice colors $v_{3,j+2}$ with $x$, making $v_{3,j+1}$ safe and $v_{2,j+1}$ sound (with doctors $v_{1,j+1}$ and $v_{3,j+1}$).
\item {\bf Case $3\alpha 6$:} 
So, assume that column $j+3$ is not empty.
Then, by Property $(4)$, $v_{3,j+2}$ is not a doctor and Alice colors $v_{1,j+1}$ with $x$, making $v_{2,j+1}$ safe and $v_{3,j+1}$ sound (with doctors $v_{2,j+1}$ and $v_{3,j+2}$).
\end{itemize}

\end{itemize}

Notice that, in each of the subcases above of the Case $3$, Alice's answer is never in a column that was a border in configuration $\alpha$ that could be in configuration $\Delta$. So if the empty column played by Bob was the column $j+3$ of configuration $\Delta$ according to the orientation of Figure \ref{fig-delta}, then this configuration disappears and the sick vertex becomes sound, because its empty neighbor in the border in configuration $\alpha$ can now become its missing doctor. The same for configurations $\Lambda$ and $\Lambda'$ if the empty column played by Bob is the column $j-3$ of these configurations according to the orientation of Figure \ref{fig-delta} and if Alice's answer is not in the column $j-2$. If Alice's answer is in the column $j-2$, then the configurations $\Lambda$ and $\Lambda'$ become $\Lambda_2$ and $\Lambda_2'$, respectively.





In every subcase of Case $3$, after Alice's move, the properties of the induction hypothesis still hold.

\end{description}

This concludes the description of Alice's algorithm and its correctness and the proof of the theorem.
\end{proof}

\section{Further work}
The natural open problem is to decide whether $\chi_g(P_m\x P_n)\leq 4$ for every grid when $m,n\geq5$.
In our opinion, obtaining such results for $m,n\geq 5$ should require the use of an essentially different technique.
Indeed, there can be many more configurations already fitting in a $5\times 5$ square that may lead Alice to be inevitably defeated with only four colors; the question is to know if (and when) Bob can always force a fifth color by reaching such a configuration.

The question we addressed could also be extended to the cylinder $P_4\x C_n$ (and more generally to $P_m\x C_n$): does the absence of a first and last column have an impact on whether or not Alice has a winning strategy with four colors when she begins?

Finally, it would also be interesting to consider the coloring game in other graph classes.
In particular, the case of trees is not settled yet.

\normalem
\nocite{*}
\bibliographystyle{plain}

\newpage
\thispagestyle{empty}

\section*{Visual summary of the configurations to keep in mind}
\vspace{2cm}
\begin{center}
\begin{tikzpicture}[scale=0.2]
\begin{scope}
    \begin{scope}[xshift=30cm, yshift = 15cm]
        \draw (0,0) node {\large Border configurations};
    \end{scope}
    
    \begin{scope}[xshift=0cm, yshift=0cm, scale=2.5]
    \foreach \i in {2}{
    \path[draw, thin] (\i,0)--(\i,4);}
    \foreach \j in {0,1,2,3,4}{ 
    \path[draw, densely dotted] (1,\j)--(4,\j);}
    \foreach \j in {0,1,2,3,4}{
    \path[draw, thin] (2,\j)--(3,\j);} 
    \path[draw, very thick] (1,0)--(3,0)--(3,4)--(1,4);
    \draw (2.5,4.5) node {$j$};
    \draw (2.5,0.5) node {$c$};
    \draw (2.5,2.5) node {$c$};
    \draw (3.5,0.5) node {$0$};
    \draw (3.5,1.5) node {$0$};
    \draw (3.5,2.5) node {$0$};
    \draw (3.5,3.5) node {$0$};
    \draw (2.5,-0.75) node {Config. $\alpha$}; 
    \end{scope}

    \begin{scope}[xshift=12cm, yshift=0cm, scale=2.5]
    \foreach \i in {2,3}{
    \path[draw, thin] (\i,0)--(\i,4);}
    \foreach \j in {0,1,2,3,4}{ 
    \path[draw, densely dotted] (1,\j)--(4,\j);} 
    \foreach \j in {0,1,2,3,4}{ 
    \path[draw, thin] (2,\j)--(3,\j);} 
    \path[draw, very thick]  (1,0)--(3,0)--(3,4)--(1,4);
    \draw (2.5,4.5) node {$j$};
    \draw (1.5,1.5) node {$c$};
    \draw (2.5,0.5) node {};
    \draw (2.5,1.5) node {$0$};
    \draw (2.5,2.5) node {$c$};
    \draw (2.5,3.5) node {};
    \draw (3.5,0.5) node {$0$};
    \draw (3.5,1.5) node {$0$};
    \draw (3.5,2.5) node {$0$};
    \draw (3.5,3.5) node {$0$};
    \draw (2.5,-0.75) node {Config. $\beta$}; 
    \end{scope}

    \begin{scope}[xshift=22cm, yshift=0cm, scale=2.5]
    \foreach \i in {3,4}{
    \path[draw, thin] (\i,0)--(\i,4);}
    \foreach \j in {0,1,2,3,4}{ 
    \path[draw, densely dotted] (2,\j)--(3,\j);
    \path[draw, densely dotted] (4,\j)--(5,\j);} 
    \foreach \j in {0,1,2,3,4}{ 
    \path[draw, thin] (3,\j)--(4,\j);} 
    \path[draw, very thick]  (2,0)--(4,0)--(4,4)--(2,4) ;
    \draw (3.5,4.5) node {$j$};
    \draw (2.5,2.5) node {$c$};
    \draw (2.5,1.65) node {$0$};
    \draw (2.5,1.2) node {\tiny safe};
    \draw (3.5,0.5) node {$c$};
    \draw (3.5,1.5) node {$0$};
    \draw (3.5,2.5) node {$0$};
    \draw (3.5,3.5) node {$c$};
    \draw (4.5,0.5) node {$0$};
    \draw (4.5,1.5) node {$0$};
    \draw (4.5,2.5) node {$0$};
    \draw (4.5,3.5) node {$0$};
    \draw (3.5,-0.75) node {Config. $\gamma$}; 
    \end{scope}

    \begin{scope}[xshift=36cm, yshift=0cm, scale=2.5]
    \foreach \i in {2,3}{
    \path[draw, thin] (\i,0)--(\i,4) ;}
    \foreach \j in {0,1,2,3,4}{ 
    \path[draw, densely dotted] (1,\j)--(4,\j);} 
    \foreach \j in {0,1,2,3,4}{ 
    \path[draw, thin] (2,\j)--(3,\j);} 
    \path[draw, very thick]  (1,0)--(3,0)--(3,4)--(1,4) ;
    \draw (2.5,4.5) node {$j$};
    \draw (1.5,2.5) node {$c$};
    \draw (2.5,3.5) node {$c$};
    \draw (2.5,2.5) node {$0$};
    \draw (2.5,1.5) node {$c'$};
    \draw (3.5,0.5) node {$0$};
    \draw (3.5,1.5) node {$0$};
    \draw (3.5,2.5) node {$0$};
    \draw (3.5,3.5) node {$0$};
    \draw (2.5,-0.75) node {Config. $\delta$}; 
    \end{scope}
    
    \begin{scope}[xshift=50cm, yshift=0cm, scale=2.5]
    \foreach \i in {1,2}{
    \path[draw, thin] (\i,0)--(\i,4);}
    \foreach \j in {0,1,2,3}{ 
    \path[draw, densely dotted] (0,\j)--(3,\j);} 
    \foreach \j in {0,1,2,3,4}{ 
    \path[draw, thin] (1,\j)--(2,\j) ;} 
    \path[draw, very thick]  (0,0)--(1,0)--(1,4)--(0,4);
    \path[draw, very thick] (3,0)--(2,0)--(2,4)--(3,4);
    \draw (0.5,4.5) node {$j$};
    \draw (0.5,3.5) node {$c$};
    \draw (0.5,2.5) node {$c'$};
    \draw (0.5,1.5) node {$c''$};
    \draw (0.5,0.5) node {$c$};
    \draw (1.5,0.5) node {$0$};
    \draw (1.5,1.5) node {$0$};
    \draw (1.5,2.5) node {$0$};
    \draw (1.5,3.5) node {$0$};
    \draw (2.8,0.5) node {};
    \draw (2.5,1.5) node {$c'$};
    \draw (2.5,3.5) node {};
    \draw (1.5,-0.75) node {Config. $\pi$}; 
    \end{scope}
\end{scope}

\begin{scope}[xshift=5cm, yshift = -30cm]
    \begin{scope}[xshift = 25cm, yshift=15cm]
        \draw (0,0) node {\large Special configurations (sick vertices)};
    \end{scope}
    
    \begin{scope}[xshift=0cm, yshift=0cm, scale=2.5]
    \fill[red!20] (1,2) rectangle (2,3);
    \foreach \i in {1,2,3}{
    \path[draw, thin] (\i,0)--(\i,4);}
    \foreach \j in {0,1,2,3,4}{ 
    \path[draw, densely dotted] (0,\j)--(4,\j) ;} 
    \foreach \j in {0,1,2,3,4}{ 
    \path[draw, thin] (1,\j)--(3,\j) ;} 
    \path[draw, very thick]  (0,0)--(3,0)--(3,4)--(0,4);
    \draw (0.5,4.5) node {$j$};
    \draw (0.5,3.5) node {};
    \draw (0.5,2.5) node {$c$};
    \draw (0.5,1.5) node {};
    \draw (0.5,0.5) node {$c$};
    \draw (1.5,0.5) node {$0$};
    \draw (1.5,1.5) node {$c'$};
    \draw (1.5,2.5) node {$0$};
    \draw (1.5,3.5) node {$0$};
    \draw (2.5,3.5) node {$c$};
    \draw (2.5,2.5) node {$0$};
    \draw (2.5,1.5) node {$c$};
    \draw (2.5,0.5) node {$0$};
    \draw (3.5,3.5) node {$0$};
    \draw (3.5,2.5) node {$0$};
    \draw (3.5,1.5) node {$0$};
    \draw (3.5,0.5) node {$0$};
    \draw(1.4,-0.7) node {Configuration $\Delta$};
    \end{scope}
    
    \begin{scope}[xshift=16cm, yshift=0cm, scale=2.5]
    \fill[red!20] (4,2) rectangle (5,3);
    \foreach \i in {1,2,3,4,5}{
    \path[draw, thin] (\i,0)--(\i,4);}
    \foreach \j in {0,1,2,3,4}{
    \path[draw, densely dotted] (0,\j)--(6,\j);} 
    \foreach \j in {0,1,2,3,4}{ 
    \path[draw, thin] (1,\j)--(5,\j);} 
    \path[draw, very thick]  (0,0)--(6,0);
    \path[draw, very thick]  (0,4)--(6,4);
    \draw (2.5,4.5) node {$j$};
    \draw (1.5,2.5) node {$c$};
    \draw (1.5,1.65) node {$0$};
    \draw (1.5,1.25) node {\tiny{safe}};
    \draw (2.5,3.5) node {$c$};
    \draw (2.5,2.5) node {$0$};
    \draw (2.5,1.5) node {$0$};
    \draw (2.5,0.5) node {$c$};
    \draw (3.5,3.5) node {$0$};
    \draw (3.5,2.5) node {$c$};
    \draw (3.5,1.5) node {$0$};
    \draw (3.5,0.5) node {$0$};
    \draw (4.5,3.5) node {$0$};
    \draw (4.5,2.5) node {$0$};
    \draw (4.5,1.5) node {$0$};
    \draw (4.5,0.5) node {$c$};
    \draw (5.5,1.5) node {$c$};
    \draw (3.5,-0.7) node {Configuration $\Delta'$};
    \end{scope}
    
    \begin{scope}[xshift=37cm, yshift=0cm, scale=2.5]
    \fill[red!20] (3,1) rectangle (4,2);
    \foreach \i in {1,2,3,4}{
    \path[draw, thin] (\i,0)--(\i,4);}
    \foreach \j in {0,1,2,3,4}{
    \path[draw, densely dotted] (0,\j)--(5,\j);} 
    \foreach \j in {0,1,2,3,4}{ 
    \path[draw, thin] (1,\j)--(4,\j);} 
    \path[draw, very thick]  (0,0)--(5,0);
    \path[draw, very thick]  (0,4)--(5,4);
    \draw (2.5,4.5) node {$j$};
    \draw (1.5,2.5) node {$c$};
    \draw (1.5,1.65) node {$0$};
    \draw (1.5,1.25) node {\tiny{safe}};
    \draw (2.5,3.5) node {$c$};
    \draw (2.5,2.5) node {$0$};
    \draw (2.5,1.5) node {$0$};
    \draw (2.5,0.5) node {$c$};
    \draw (3.5,3.5) node {$0$};
    \draw (3.5,2.5) node {$c$};
    \draw (3.5,1.5) node {$0$};
    \draw (3.5,0.5) node {$0$};
    \draw (4.5,1.5) node {$c'$};
    \draw (4.5,0.5) node {$c$};
    \draw (3.1,-0.7) node {Configuration $\Delta'_2$};
    \end{scope}
    
    \begin{scope}[xshift=5cm, yshift=-17cm, scale=2.5]
    \fill[red!20] (2,2) rectangle (3,3);
    \foreach \i in {1,2,3,4}{
    \path[draw, thin] (\i,0)--(\i,4);}
    \foreach \j in {0,1,2,3,4}{ 
    \path[draw, densely dotted] (0,\j)--(5,\j);} 
    \foreach \j in {0,1,2,3,4}{ 
    \path[draw, thin] (1,\j)--(4,\j);}
    \path[draw, very thick] (1,0)--(2,0)--(2,4)--(1,4);
    \path[draw, very thick] (5,0)--(3,0)--(3,4)--(5,4);
    \draw (3.5,4.5) node {$j$};
    \draw (4.5,1.5) node {$c''$};
    \draw (3.5,0.5) node {$c$};
    \draw (3.5,1.5) node {$0$};
    \draw (3.5,2.5) node {$c$};
    \draw (3.5,3.5) node {$c'$};
    \draw (2.5,0.5) node {$0$};
    \draw (2.5,1.5) node {$0$};
    \draw (2.5,2.5) node {$0$};
    \draw (2.5,3.5) node {$0$};
    \draw (1.5,0.5) node {$0$};
    \draw (1.5,1.5) node {$c$};
    \draw (1.5,2.5) node {$0$};
    \draw (1.5,3.5) node {$c$};
    \draw (3,-0.75) node {Configuration $\Lambda$};
    \end{scope}
    
    \begin{scope}[xshift=30cm, yshift=-17cm, scale=2.5]
    \fill[red!20] (2,2) rectangle (3,3);
    \foreach \i in {1,2,3,4,5}{
    \path[draw, thin] (\i,0)--(\i,4);}
    \foreach \j in {0,1,2,3,4}{ 
    \path[draw, densely dotted] (0,\j)--(6,\j);} 
    \foreach \j in {0,1,2,3,4}{ 
    \path[draw, thin] (1,\j)--(4,\j);}
    \path[draw, very thick] (1,0)--(2,0)--(2,4)--(1,4);
    \path[draw, very thick] (6,0)--(3,0)--(3,4)--(6,4);
    \draw (3.5,4.5) node {$j$};
    \draw (4.5,0.5) node {$0$};
    \draw (4.5,1.5) node {$0$};
    \draw (4.5,2.5) node {$0$};
    \draw (4.5,3.5) node {$c$};
    \draw (5.6,2.5) node {\tiny{safe}};
    \draw (3.5,0.5) node {$c$};
    \draw (3.5,1.5) node {$0$};
    \draw (3.5,2.5) node {$c$};
    \draw (3.5,3.5) node {$c'$};
    \draw (2.5,0.5) node {$0$};
    \draw (2.5,1.5) node {$0$};
    \draw (2.5,2.5) node {$0$};
    \draw (2.5,3.5) node {$0$};
    \draw (1.5,0.5) node {$0$};
    \draw (1.5,1.5) node {$c$};
    \draw (1.5,2.5) node {$0$};
    \draw (1.5,3.5) node {$c$};
    \draw (3.5,-0.75) node {Configuration $\Lambda'$};
    \end{scope}

\end{scope}
\end{tikzpicture}

\end{center}
\vspace{1cm}

Configurations $\Lambda_2$ and $\Lambda_2'$ are obtained from Configurations $\Lambda$ and $\Lambda'$, respectively, by replacing exactly one 0 (zero) of column $j-2$ by a color different from $c$, with the additional constraint that column $j-3$ cannot be empty.

All configurations have their symmetrical counterpart according to the horizontal axis.
However, configurations $\Lambda, \Lambda', \Lambda_2$ and $\Lambda'_2$ are only defined for column $j$ being their left border, and, in configuration $\Delta$, column $j+2$ will always be its right border.

\end{document}